\documentclass{amsart}

\usepackage{marktext}
\usepackage{gimac}

\usepackage{algpseudocode, algorithm}
\usepackage{mdframed}
\usepackage{algorithmicx}
\usepackage{algpseudocode}

\newmdenv[
  linewidth=0.5pt,
  skipabove=10pt,
  skipbelow=10pt,
  backgroundcolor=gray!3,
  linecolor=gray!60,
  innertopmargin=8pt,
  innerbottommargin=8pt,
  roundcorner=4pt,
  nobreak=false  
]{algbox}

\newcommand{\tr}{\mathrm{Tr}}

\newcommand{\Unif}{\mathrm{Unif}}
\newcommand{\gap}{\mathrm{Gap}}

\newcommand{\pt}{\mathrm{pt}}
\newcommand{\TV}{\mathrm{TV}}
\DeclareMathOperator{\dist}{dist}

\DeclarePairedDelimiter{\ip}{\langle}{\rangle}

\newtheorem{assumption}[theorem]{Assumption}
\begin{document}

\title[Rapid convergence of parallel and simlulated tempering]
{Rapid convergence of tempering chains to multimodal Gibbs measures}

\author[Son]{Seungjae Son}
\address{%
  Department of Mathematical Sciences, Carnegie Mellon University, Pittsburgh, PA 15213.
}
\email{seungjas@andrew.cmu.edu}

\begin{abstract}
We study the spectral gaps of parallel and simulated tempering chains targeting multimodal Gibbs measures. In particular, we consider chains constructed from Metropolis random walks that preserve the Gibbs distributions at a sequence of harmonically spaced temperatures. We prove that their spectral gaps admit polynomial lower bounds of order~$11$ and~$12$ in terms of the low target temperature. The analysis applies to a broad class of potentials, beyond mixture models, without requiring explicit structural information on the energy landscape. The main idea is to decompose the state space and construct a Lyapunov function based on a suitably perturbed potential, which allows us to establish lower bounds on the local spectral gaps.
\end{abstract}

\subjclass{%
  Primary:
    60J22,  
  Secondary:
    65C05, 
    65C40, 
    60J05, 
    60K35. 
  }
\keywords{
  Markov Chain Monte Carlo,
  sequential Monte Carlo,
  annealing,
  multimodal distributions,
  high dimensional sampling.
}
\keywords{Parallel tempering, Simulated tempering, Gibbs, multimodal, Metropolis random walk, Lyapunov}
\maketitle
\section{Introduction}
We show that parallel and simulated tempering for multimodal Gibbs measures exhibit a spectral gap that is polynomial in the target temperature. To this end, we first provide a brief description of the chains and state the main result, highlighting some of its key features and the main challenges in its proof in Section~\ref{s:brief-main-result}. We then survey the related literature and discuss the motivation and background in Section~\ref{s:motivation-and-background}.

\subsection{Informal statement of main result}\label{s:brief-main-result}

Let~$\T^d \cong [0,1)^d$ be the~$d$-dimensional torus and let~$U\colon \T^d \to [0,\infty)$ be an energy potential. For~$\epsilon > 0$, define the Gibbs distribution~$\pi_\epsilon$ with density
\begin{equation}\label{e:gibbs-d-def}
    \pi_\epsilon(x) = \frac{1}{Z_\epsilon} \tilde \pi_\epsilon (x)
    \quad\text{where}\quad 
    \tilde\pi_\epsilon(x) = \exp\paren*{-\frac{U(x)}{\epsilon}},
    \quad 
    Z_\epsilon = \int_{\T^d} \tilde \pi_\epsilon(y)\,dy\,.
\end{equation}

We are interested in sampling from~$\pi_\epsilon$ in the low-temperature regime (i.e.\ small~$\epsilon$). When~$U$ has multiple wells, standard Markov chain Monte Carlo methods often mix slowly due to metastability. A widely used approach to mitigate this issue is \emph{parallel tempering} (see, e.g., \cite{SwendsenWang86, Geyer91}), which simulates multiple chains at different temperatures and enables efficient transitions via swap moves.

We briefly describe one step of the parallel tempering chain. Let~$\epsilon_0 > \epsilon_1 > \cdots > \epsilon_N$ be a sequence of temperatures, and let~$(h_k)_{k=0}^N$ be the corresponding step sizes. The state space is~$\paren*{\T^d}^{N+1}$. Given the current state
\begin{equation}
    X = (X_0, X_1, \ldots, X_N)\,,
\end{equation}
one step of the chain proceeds as follows:
\begin{enumerate}[\indent 1.]
    \item \textbf{Swap move.}  
    With probability~$1/2$, do nothing. Otherwise, choose~$I\in \set{0, \ldots, N-1}$ uniformly at random and propose to swap~$X_I$ and~$X_{I+1}$. Accept the swap with probability
    \begin{equation}
        \min\set*{1, \frac{\tilde \pi_{\epsilon_I}(X_{I+1}) \tilde\pi_{\epsilon_{I+1}}(X_I)}{\tilde \pi_{\epsilon_I}(X_I)\tilde\pi_{\epsilon_{I+1}}(X_{I+1})}}\,.
    \end{equation}
    Denote the resulting state by~$X^{(1)}$.

    \item \textbf{Metropolis update.}  
    With probability~$1/2$, do nothing. Otherwise, choose $J\in \set{0, \ldots, N}$ uniformly at random. Sample~$\zeta \sim \Unif(B(0,1))$ and propose
    \begin{equation}
        Y_J = X^{(1)}_J + h_J \zeta\,.
    \end{equation}
    Accept this proposal with probability
    \begin{equation}
        \min\set*{1, \frac{\tilde \pi_{\epsilon_J}(Y_J)}{\tilde \pi_{\epsilon_J}(X^{(1)}_J)}}\,.
    \end{equation}
    Denote the resulting state by~$X^{(2)}$.

    \item \textbf{Swap move.}  
    Repeat the swap step starting from~$X^{(2)}$, and denote the final state by~$X^{\mathrm{new}}$.
\end{enumerate}

This defines a non-negative definite reversible Markov chain~$P_\pt$ on~$\paren*{\T^d}^{N+1}$ with invariant distribution~$\pi_\pt = \prod_{k=0}^N \pi_{\epsilon_k}$. A precise definition is given in Section~\ref{s:main-result} (see Definitions~\ref{d:pt-def} and~\ref{d:metropolis-def}).

To quantify convergence, we use the notion of spectral gap.

\begin{definition}[Spectral gap]
Let~$P$ be a Markov kernel on a Polish space~$\mathcal X$, reversible with respect to a probability measure~$\pi$. The spectral gap of~$P$ is
\begin{equation}\label{e:gap-def}
    \gap(P) = \inf_{f\in L^2(\pi)\setminus\set{0}} \frac{\mathcal E(f)}{\var_\pi(f)}\,,
\end{equation}
where
\begin{equation}
    \mathcal E(f) = \ip*{f , (I-P)f}_{L^2(\pi)} 
    = \frac{1}{2}\int_{\mathcal X}\int_{\mathcal X} \abs*{f(y)-f(x)}^2 P(x,dy)\pi(dx)\,.
\end{equation}
\end{definition}

We now state the main result informally.

\begin{theorem}\label{t:main-theorem-intro}
Let~$U\colon \T^d \to \R$ be a regular double-well potential with wells of equal depth (but not necessarily the same shape). 
Then there exist constants $\eta, c_1, c_2, \bar C_\mathrm{BV}, c_d > 0$ such that the following holds.

For any~$0 < \underline \epsilon < \bar \epsilon < 1$, set
\begin{equation}
    N = \ceil*{\frac{1}{\underline \epsilon}}, 
    \quad \epsilon_0 = \bar \epsilon, 
    \quad \epsilon_N = \underline \epsilon,
\end{equation}
and choose~$\paren*{1/\epsilon_k}_{k=0}^N$ to be linearly spaced. For each~$k$, define
\begin{equation}
    h_k = \min\set*{\eta \epsilon_k^2, 1}\,.
\end{equation}
Then the corresponding parallel tempering chain~$P_\pt$ satisfies
\begin{equation}
    \gap(P_\pt) \geq D \min\set*{c_2,\, c_1 \underline\epsilon^7} \underline\epsilon^4\,,
\end{equation}
where
\begin{equation}\label{e:D-def-intro}
    D(d, \epsilon_0) = c_d \exp\paren*{-\bar C_\mathrm{BV} - 2\paren*{1 + \frac{1}{\epsilon_0}}\norm{U}_{L^\infty}} h_0^2\,.
\end{equation}
\end{theorem}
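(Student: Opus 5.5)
The natural route is the decomposition framework of Woodard--Schmidler--Huber (and its refinement by Ge--Lee--Risteski), which reduces a lower bound on $\gap(P_\pt)$ to three ingredients. Fix a partition $\T^d = A_1 \sqcup A_2$ into neighbourhoods of the two wells; we take the basins of attraction of a suitably perturbed potential $\tilde U$, with boundaries regular enough to carry a bounded-variation bound, which is where $\bar C_{\mathrm{BV}}$ enters. The decomposition theorem bounds $\gap(P_\pt)$ below by a product of three quantities.
\begin{enumerate}[\indent (i)]
\item The minimum over $k$ and $j$ of the local spectral gaps of the Metropolis chain at temperature $\epsilon_k$, restricted to $A_j$.
\item The spectral gap of the highest-temperature Metropolis chain on all of $\T^d$.
\item A quantity controlled by the overlap of neighbouring levels, namely lower bounds on $\pi_{\epsilon_{k+1}}(A_j)/\pi_{\epsilon_k}(A_j)$ together with swap acceptance probabilities.
\end{enumerate}
The factor $1/(N+1)$ per move, and $N\approx 1/\underline\epsilon$, account for powers of $\underline\epsilon$ coming from the random choice of level and swap index. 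The $N^2$-type loss from the projected chain on levels accounts for the remaining powers in $\underline\epsilon^4$.

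For (ii) we use that at temperature $\epsilon_0=\bar\epsilon$ the density ratio $\sup\pi_{\epsilon_0}/\inf\pi_{\epsilon_0}$ is at most $e^{\norm{U}_{L^\infty}/\epsilon_0}$. Comparing with the uniform ball walk on the torus, whose gap is of order $c_d h_0^2$, gives the factor $D$ in \eqref{e:D-def-intro}; the extra $e^{-2\norm{U}_{L^\infty}}$ and $e^{-\bar C_{\mathrm{BV}}}$ absorb the comparison constants. For (iii) we use the harmonic spacing $1/\epsilon_{k+1}-1/\epsilon_k=O(1)$. This gives $\tilde\pi_{\epsilon_{k+1}}/\tilde\pi_{\epsilon_k}=e^{-U\cdot O(1)}$, bounded above and below. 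Equal well depths together with Laplace asymptotics make $\pi_{\epsilon}(A_j)$ bounded away from $0$ uniformly in $\epsilon$, so both the overlap and the expected swap acceptance are of order one.

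The main obstacle is (i), the local gaps of the Metropolis walk at low temperature restricted to a single well, uniformly down to $\underline\epsilon$ with polynomial loss. The step size $h_k=\eta\epsilon_k^2$ is chosen so that a proposal changes $U/\epsilon_k$ by $O(\epsilon_k)$ near the minimum, so acceptance is of order one. We then do the following.
\begin{enumerate}[\indent 1.]
\item Build a Lyapunov function $V=\exp(\tilde U/(2\epsilon))$, adapted to the perturbed potential so that its drift is strictly negative off a small ball $B$ of radius of order $\sqrt{\epsilon}$ around the minimum, including near the cut boundary of $A_j$. Concretely, $PV\le(1-\lambda)V+b\mathbf 1_B$ with $\lambda$ of order $h^2/\epsilon^2$, that is, order $\epsilon^2$.
\item Prove a local Poincaré inequality on $B$, where $\pi_\epsilon$ is nearly Gaussian with variance $\epsilon$ and the walk with step $\epsilon^2$ has gap of order $(\epsilon^2)^2/\epsilon=\epsilon^3$.
\item Combine the two via the Lyapunov-to-Poincaré argument of Bakry--Barthe--Cattiaux--Guillin, which yields a local gap of order $\min\{c_2,c_1\epsilon^7\}$ after tracking constants.
\end{enumerate}
The difficulty I expect is verifying the drift condition for the discrete Metropolis step near saddles and at the artificial boundary of $A_j$. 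This is why the perturbation of $U$ is needed: it makes $\nabla\tilde U$ point outward across $\partial A_j$, so that rejected moves do not spoil the drift.

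Multiplying the three bounds gives the stated estimate $D\min\{c_2,c_1\underline\epsilon^7\}\underline\epsilon^4$.
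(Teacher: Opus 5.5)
Your architecture matches the paper's: the Woodard--Schmidler--Huber decomposition, a Lyapunov drift off an $O(\sqrt\epsilon)$-ball around the minimum combined with a Gaussian-comparison gap on that ball, and Holley--Stroock at the top level. But the step you single out as the crux is not resolved, and the mechanism you propose for it cannot work as stated.

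Within $O(\sqrt\epsilon)$ of the index-one saddle $0\in\partial\Omega_j$ one has $|\nabla U|=O(\sqrt\epsilon)$. So for a Lyapunov function of the form $e^{\gamma U}$, the drift at interior points there is governed by $\Delta U(0)=-\lambda_u+\sum_i\lambda_i$, which may be positive. The drift condition then fails regardless of how boundary rejections are handled. That, not the sign of $\nabla U\cdot n$, is why the paper perturbs: $\hat U=U-\hat P$ replaces the stable Hessian eigenvalues at $0$ by a small $\kappa$, so that $\Delta\hat U(0)=-\lambda_u+(d-1)\kappa<0$.

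Your remedy, making $\nabla\tilde U$ point outward across $\partial A_j$ with $A_j$ the basin of $\tilde U$, is self-contradictory. A basin boundary is invariant under the gradient flow, so $\nabla\tilde U$ is tangent to it and vanishes at the saddle on it. (A single $\tilde U$ also could not point outward from both $A_1$ and $A_2$ across their common boundary.) The paper in fact designs $\hat P$ to \emph{preserve} $\nabla\hat U\cdot n=0$ on $\partial\Omega$ (Lemma~\ref{l:Utilde-vanish}). The genuinely delicate estimate is this: near the saddle, exiting moves go along the concave normal direction ($n^T\hat H n\approx-\lambda_u$). Rejecting them discards part of the only negative curvature, and one must show enough survives, which the paper does via $\E[\zeta_n^2\one_{E^c}]\ge w$ and $\kappa\le\lambda_u w/(2(d-1))$ (Lemmas~\ref{l:Dexit-equality}--\ref{l:lyapunov-near-saddle-boundary}).

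Moreover, such a perturbation must satisfy $\|\hat U-U\|_\infty=O(\epsilon)$, so it lives at scale $\sqrt\epsilon$ and depends on $\epsilon$. That bound is what lets the gap of the auxiliary chain targeting $e^{-\hat U/\epsilon}$ transfer back to $Q_{h,\epsilon}$ by Holley--Stroock at $O(1)$ cost, a step missing from your plan. For the same reason it cannot define the partition, which must be the same at all $N+1$ levels; the paper uses the basins of $U$ itself.

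There are two further problems.

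First, in (iii) the persistence quantity $\gamma_{\pt}$ is the product $\prod_{k=1}^N\min\{1,\pi_{k-1}(\Omega_i)/\pi_k(\Omega_i)\}$ over $N\approx 1/\underline\epsilon$ levels. Per-level ratio bounds, or masses bounded below, therefore give only something exponentially small in $1/\underline\epsilon$. The paper bounds the product by $\exp(-C_m^2\int|\partial_\epsilon\pi_\epsilon(\Omega_i)|\,d\epsilon)$. This total variation in temperature is what $\bar C_{\mathrm{BV}}$ is; it has nothing to do with boundary regularity.

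Second, your exponents do not cohere. With $B$ of radius $a\sqrt\epsilon$, $|\nabla U|^2$ is only of order $a^2\epsilon$ near $\partial B$ (and of order $\epsilon$ near the saddle). So the drift rate of $e^{\tilde U/(2\epsilon)}$ is at best of order $h^2/\epsilon$, not $h^2/\epsilon^2$. Even granting your numbers, the Lyapunov-to-Poincar\'e bound $\lambda\alpha/(b+\alpha)$ would give $\epsilon^2\cdot\epsilon^3=\epsilon^5$, not the $\epsilon^7$ you claim. The paper instead takes $W=e^{\gamma\hat U}$ with a fixed small $\gamma$ and obtains drift $\lambda\gamma h^2$ with $b=1$. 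Lemma~\ref{l:mrw-lyapunov} then yields $h^2\cdot h^2/\epsilon=\eta^4\epsilon^7$.
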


The spectral gap bound for a non-negative definite reversible Markov chain implies quantitative convergence in total variation. The following corollary is an immediate consequence of~\cite[Theorem 2.1]{RobertsRosenthal97}.

\begin{corollary}
Under the assumptions and choices of Theorem~\ref{t:main-theorem-intro}, let~$\nu_0$ be an initial distribution on~$\mathcal X_\pt = (\T^d)^{N+1}$ such that~$\nu_0 \ll \pi_\pt$. Then, for all~$m\in \N$,
\begin{equation}
    \norm*{\nu_0 P_\pt^m - \pi_\pt}_\TV 
    \leq \frac{1}{2}\paren*{1 - \gap(P_\pt)}^m 
    \norm*{\frac{d\nu_0}{d\pi_\pt} - 1}_{L^2(\pi_\pt)}\,,
\end{equation}
where~$\gap(P_\pt)$ satisfies the lower bound in Theorem~\ref{t:main-theorem-intro}.
\end{corollary}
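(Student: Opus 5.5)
The plan is to derive the corollary from the spectral theory of the non-negative definite reversible operator $P_\pt$ on $L^2(\pi_\pt)$, and then to invoke the lower bound of Theorem~\ref{t:main-theorem-intro}. Set $f = d\nu_0/d\pi_\pt$. If $f\notin L^2(\pi_\pt)$, the right-hand side is infinite and there is nothing to prove, so assume $f\in L^2(\pi_\pt)$. Reversibility gives that the density of $\nu_0 P_\pt^m$ with respect to $\pi_\pt$ is $P_\pt^m f$. Hence
\begin{equation}
    \norm*{\nu_0 P_\pt^m - \pi_\pt}_\TV = \frac12 \norm*{P_\pt^m f - 1}_{L^1(\pi_\pt)} \le \frac12 \norm*{P_\pt^m (f-1)}_{L^2(\pi_\pt)}\,,
\end{equation}
using $P_\pt 1 = 1$ and the Cauchy--Schwarz inequality.

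Next we bound the operator norm of $P_\pt$ on the mean-zero subspace $L^2_0(\pi_\pt)=\set{g : \int g\,d\pi_\pt = 0}$. This subspace is invariant under $P_\pt$, and $f-1$ lies in it. Since $P_\pt$ is self-adjoint and non-negative definite, its spectrum restricted to $L^2_0$ lies in $[0, \lambda^*]$, where
\begin{equation}
    \lambda^* = \sup_{g\in L^2_0\setminus\set{0}} \frac{\ip{g, P_\pt g}}{\norm{g}^2} = 1 - \gap(P_\pt)\,.
\end{equation}
The last equality follows from the definition~\eqref{e:gap-def}, because $\var_{\pi_\pt}(g) = \norm{g}^2$ for mean-zero $g$, and both $\mathcal E$ and $\var$ are invariant under adding constants. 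Non-negativity is exactly what rules out spectrum near $-1$. Therefore $\norm{P_\pt^m g} \le (1-\gap(P_\pt))^m \norm{g}$ on $L^2_0$ by the spectral theorem, which is the statement of \cite[Theorem 2.1]{RobertsRosenthal97}.

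Combining the two displays gives the claimed inequality. Substituting the bound of Theorem~\ref{t:main-theorem-intro} for $\gap(P_\pt)$ completes the argument.

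The only nontrivial input is that $P_\pt$ is non-negative definite. I would check this from the palindromic structure swap–Metropolis–swap, namely $P_\pt = Q M Q$. The swap kernel $Q$ is self-adjoint, and the Metropolis kernel $M$ includes a lazy half, so $M = \tfrac12(I + M')$ with $M'$ self-adjoint and $\norm{M'}\le 1$, which makes $M \ge 0$. Then $\ip{g, QMQg} = \ip{Qg, M Qg} \ge 0$, so $P_\pt$ is self-adjoint and non-negative definite. This is the step I would verify carefully. Everything else is routine.
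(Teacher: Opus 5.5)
Your proposal is correct and follows the paper's approach. The paper simply cites \cite[Theorem 2.1]{RobertsRosenthal97} and relies on the reversibility and non-negative definiteness of $P_\pt = SRS$ noted after Definition~\ref{d:pt-def}; you have written out that same argument in full, including the identification $\|P_\pt|_{L^2_0}\| = 1-\gap(P_\pt)$ and the check $\ip{g, SRSg} = \ip{Sg, RSg} \ge 0$.
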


We note that the estimates used in the proof of Theorem~\ref{t:main-theorem-intro} can also be applied to the \emph{simulated tempering} chain~$P_{\mathrm{st}}$. We briefly describe one step of this chain. Let~$\epsilon_0 > \epsilon_1 > \cdots > \epsilon_N$ be a sequence of temperatures, and let~$(h_k)_{k=0}^N$ be the corresponding step sizes. The state space is~$\T^d\times \set{0, \ldots, N}$. Given the current state~$(Z, I)$, one step of the chain proceeds as follows:
\begin{enumerate}[\indent 1.]
    \item \textbf{Temperature update.}  
    With probability~$1/2$, do nothing. Otherwise, choose~$J \in \set{0, \ldots, N}$ with probability
    \begin{equation}
        \frac{\tilde \pi_{\epsilon_J}(Z)}{\sum_{k=0}^N \tilde \pi_{\epsilon_k}(Z)}\,,
    \end{equation}
    and move to~$(Z, J)$. 

    \item \textbf{Metropolis update.}  
    With probability~$1/2$, do nothing. Otherwise, sample~$\zeta \sim \Unif(B(0,1))$ and propose
    \begin{equation}
        Y = Z + h_{J} \zeta\,.
    \end{equation}
    Accept this proposal with probability
    \begin{equation}
        \min\set*{1, \frac{\tilde \pi_{\epsilon_{J}}(Y)}{\tilde \pi_{\epsilon_{J}}(Z)}}\,.
    \end{equation}
    Denote the resulting state by~$(Z^{(1)}, J)$.

    \item \textbf{Temperature update.}  
    Repeat the temperature update step starting from $(Z^{(1)}, J)$, and denote the final state by~$(Z^{(1)}, J^{(1)})$.
\end{enumerate}

This defines a non-negative definite reversible Markov chain~$P_{\mathrm{st}}$ on~$\T^d \times \set{0, \ldots, N}$ with invariant distribution~$\pi_{\mathrm{st}}(z,i) \propto \tilde \pi_{\epsilon_i}(z)$. In the simulated tempering setting, one obtains a bound analogous to that for parallel tempering, with the only difference being an additional factor of the final temperature in the order, as stated in the following. 

\begin{corollary}\label{c:simulated-tempering}
Under the same setting and with the same constants as in Theorem~\ref{t:main-theorem-intro}, we have
\begin{equation}
    \gap(P_{\mathrm{st}}) \geq \hat D \min\set*{c_2,\, c_1 \underline\epsilon^7} \underline\epsilon^5\,,
\end{equation}
where
\begin{equation}\label{e:D-def-intro}
    \hat D(d, \epsilon_0) = \frac{1}{8}c_d \exp\paren*{-\bar C_\mathrm{BV} - \paren*{3 + \frac{2}{\epsilon_0}}\norm{U}_{L^\infty}} h_0^2\,.
\end{equation}
\end{corollary}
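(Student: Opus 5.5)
I would derive Corollary~\ref{c:simulated-tempering} from the same ingredients as Theorem~\ref{t:main-theorem-intro}, using the decomposition framework for simulated tempering of Woodard--Schmidler--Huber type. The state space $\T^d\times\set{0,\dots,N}$ is partitioned into pieces $A_{k,j}=\Omega_j\times\set{k}$, where $\Omega_1,\Omega_2$ is the decomposition of $\T^d$ into the basins of the two wells used in the parallel tempering proof; at the hottest level $k=0$ I do not split. The spectral gap of $P_{\mathrm{st}}$ is then bounded below by the product of two quantities. The first is the minimum of the local spectral gaps of the Metropolis kernels restricted to each piece. The second is the spectral gap of a projected chain on the index set $\set{(k,j)}$, whose transitions come from the temperature updates.

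\textbf{Local gaps.} The local gaps are exactly the quantities estimated in the parallel tempering proof. There, the Lyapunov function built from the perturbed potential gives a bound of order $D\min\set{c_2,c_1\underline\epsilon^7}$ times powers of $\underline\epsilon$ for the Metropolis walk at temperature $\epsilon_k$ with step $h_k$ restricted to a basin. I would reuse those bounds verbatim. The only change comes from the sandwich between $P_{\mathrm{st}}$ and its Metropolis part. The temperature-update half steps contribute the factor $1/8$: one factor $1/2$ from each laziness step, plus a further $1/2$ in the comparison. So one obtains the Dirichlet form comparison $\mathcal E_{\mathrm{st}}(f)\ge \tfrac18\sum_k \pi_{\mathrm{st}}(k)\,\mathcal E_{k}(f(\cdot,k))$.

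\textbf{Projected chain.} I would bound the projected chain by a canonical-path / Cheeger argument on a path graph of length $N+1$ (two branches joined at level $0$). The conductance between adjacent levels $k,k+1$ is controlled by
$$\int \min\set*{\frac{\tilde\pi_{\epsilon_k}}{\sum_l\tilde\pi_{\epsilon_l}},\frac{\tilde\pi_{\epsilon_{k+1}}}{\sum_l\tilde\pi_{\epsilon_l}}}\,d\pi_{\mathrm{st}}.$$
Because $1/\epsilon_k$ is linearly spaced with increment of order $1$, the ratios $\tilde\pi_{\epsilon_k}/\tilde\pi_{\epsilon_{k+1}}$ are bounded by $e^{\norm{U}_\infty}$.

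Two further inputs are needed. The level masses $Z_{\epsilon_k}/\sum_l Z_{\epsilon_l}$ are comparable up to a factor of order $1/N$; this is where the BV/equal-depth assumption (constant $\bar C_{\mathrm{BV}}$) enters, as in the theorem, and it also gives comparable masses of the two wells at each level. The conditional probability of selecting $J=k$ is at least of order $e^{-\norm{U}_\infty(1+1/\epsilon_0)}/N$.

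Together these give a projected gap of order $N^{-2}$ times the exponential constants. Compared with parallel tempering, where the swap rate already carries the factor $1/N$ but the product structure spares a factor, one extra $1/N\sim\underline\epsilon$ appears. This yields $\underline\epsilon^5$ in place of $\underline\epsilon^4$, and the altered exponent $(3+2/\epsilon_0)\norm{U}_{L^\infty}$.

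\textbf{Main obstacle.} I expect the main obstacle to be the bookkeeping of this projected-chain bound. Specifically, one must track how the denominator $\sum_l\tilde\pi_{\epsilon_l}(z)$ in the temperature-selection probability degrades the constant, uniformly in $z$, so as to produce exactly the single extra factor $\underline\epsilon$ and the stated exponential constant.
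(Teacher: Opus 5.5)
Your plan takes a different route from the one the paper indicates, and its decisive step is missing. The paper does not analyse a projected level chain at all. It reuses the inputs of the parallel tempering proof: the restricted gaps $\gap(T_{h_k,\epsilon_k}\vert_{\Omega_i})\ge\frac12\min\{c_2,c_1\underline\epsilon^7\}$, the bounds on $\gamma_\pt$ and on $\delta_\pt\ge e^{-\norm{U}_{L^\infty}}$ from Lemmas~\ref{l:gamma-swc-lb} and~\ref{l:delta-swc-lb}, and the hottest-level gap from Lemma~\ref{l:general-eh-gap}. It inserts these into the simulated-tempering counterpart of the Woodard--Schmidler--Huber bound (cf.\ Zheng's comparison of simulated tempering with swapping). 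Relative to the parallel tempering bound, this costs one more factor of $\delta_\pt$, one more factor $1/(N+1)$ (of order $\underline\epsilon$), and a numerical constant. That is exactly the passage from $D\underline\epsilon^4$ to $\hat D\underline\epsilon^5$, including the change of exponent from $2+2/\epsilon_0$ to $3+2/\epsilon_0$. In particular, $h_0^2e^{-2\norm{U}_{L^\infty}/\epsilon_0}$ comes from the hottest-level gap and $e^{-\bar C_{\mathrm{BV}}}$ comes from $\gamma_\pt$. The local restricted gaps contain no factor $D$ and no extra power of $\underline\epsilon$, contrary to what your sketch asserts.

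A direct decomposition could work in principle, but the inputs you give for the projected chain are false or unsupported.
\begin{itemize}
\item \emph{Level masses.} The level marginal of $\pi_{\mathrm{st}}$ is $Z_{\epsilon_k}/\sum_l Z_{\epsilon_l}$. Since $U\ge0$ vanishes at the nondegenerate minimum $m_1$, Laplace's method gives $Z_\epsilon\asymp\epsilon^{d/2}$, so the level masses range over a factor of order $\underline\epsilon^{d/2}$. The constant $\bar C_{\mathrm{BV}}$ cannot help here, because it only controls $\epsilon\mapsto\pi_\epsilon(\Omega_i)$.
\item \emph{Selection probability.} $\tilde\pi_{\epsilon_k}(z)/\sum_l\tilde\pi_{\epsilon_l}(z)$ has no uniform lower bound of the form $e^{-\norm{U}_{L^\infty}(1+1/\epsilon_0)}/N$. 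It is at most $\exp(-U(z)(1/\epsilon_k-1/\epsilon_0))$, which is exponentially small in $1/\underline\epsilon$ when $k$ is near $N$ and $U(z)$ is of order one. Only the ratio $\tilde\pi_{\epsilon_{k+1}}/\tilde\pi_{\epsilon_k}\ge e^{-\norm{U}_{L^\infty}}$ is uniform. Turning that ratio into a lower bound on the flow between adjacent levels needs an overlap quantity averaged against the stationary measure (the role $\delta_\pt$ plays), and you do not supply one.
\item \emph{Power counting.} Your own scheme does not produce $\underline\epsilon^5$. A projected gap of order $N^{-2}$ times local gaps $\frac12\min\{c_2,c_1\underline\epsilon^7\}$ (together with the level-$0$ gap) would give $\underline\epsilon^2$. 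The ``one extra $1/N$'' is matched to the target rather than derived.
\end{itemize}
So the step you call bookkeeping is the whole content of the corollary, and as written it is missing.
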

We omit the proof for this corollary, as it follows by combining the estimates developed for the parallel tempering chain with standard arguments for simulated tempering (see, e.g.,~\cite{Zheng03, WoodardSchmidlerEA09}). Since this argument introduces no essentially new ideas beyond those already used in the proof of Theorem~\ref{t:main-theorem-intro}, we do not pursue the details here.
\smallskip

We now briefly discuss some notable features of our results, as well as the main challenges that arise in their proof. As noted earlier, when the potential~$U$ exhibits multiple wells, transitions between modes become increasingly rare at low temperatures. In particular, the spectral gap of the Langevin diffusion is known to be exponentially small in the temperature~$\epsilon$, i.e., of order $O(\exp\paren*{-1/\epsilon})$ (see, e.g.,~\cite{BovierGayrardEA05, Kolokoltsov00, Pavliotis14, Arr67}). Consequently, the mixing time grows exponentially as~$\epsilon \to 0$, making efficient sampling prohibitively difficult in this regime.

Our main result (Theorem~\ref{t:main-theorem-intro}) shows that parallel tempering fundamentally alters this picture. For the same class of multi-well potentials, the resulting chain admits a spectral gap that is polynomial in~$\epsilon$ (of order~$11$), representing an exponential improvement over the classical behavior. This provides a theoretical explanation for the empirical success of tempering-based methods in overcoming metastability.

We make a few remarks on some notable features of our results. First, the algorithm does not require any prior structural information about the potential~$U$, such as the locations or depths of its wells, nor any explicit decomposition of the state space. The method only assumes access to evaluations of~$U$, as needed to implement the Metropolis updates and swap/temperature update moves. Second, while we present the result in the setting of a symmetric double-well potential for clarity, the argument extends naturally to more general landscapes with multiple wells and non-degenerate saddles of arbitrary indices. In fact, our main theorem (Theorem~\ref{t:main-theorem}) is proved under the more flexible Assumption~\ref{a:massRatioBound}, which allows for wells of unequal depth, provided that each carries a non-negligible fraction of the total mass.

Another important aspect is that we work directly with explicit, time-discretized Markov chains rather than idealized continuous-time diffusions. In particular, we consider Metropolis-type dynamics that exactly preserve the Gibbs distribution at each temperature level. This reflects practical implementations and avoids relying on assumptions about exact sampling from Langevin diffusions; indeed, while ergodicity of the continuous-time dynamics is relatively well understood under suitable conditions, the ergodic properties of their time-discretized counterparts (such as MALA, MALTA, or Metropolis random walk) for general potentials are already highly nontrivial (see, e.g.,~\cite{MattinglyStuartEA02, BouRabeeHairer13, BouRabeeVandenEijnden10, DurmusMoulines17}).

At a high level, the proof of Theorem~\ref{t:main-theorem} combines a decomposition of the state space with an analysis of the spectral gaps of the corresponding restricted chains. Intuitively, the highest-temperature chain facilitates movement between wells, while lower-temperature chains ensure rapid mixing within each well. Turning this intuition into a rigorous argument, however, presents several challenges. In particular, near saddle points, the local geometry of the potential is unfavorable, in the sense that the Laplacian of~$U$ may become positive, and the dynamics may not exhibit a clear tendency to move toward lower energy. Moreover, the behavior of the chain near the boundaries of the decomposition is delicate, as proposed moves may be rejected and the effective dynamics depend subtly on both the geometry of the boundary and the acceptance mechanism.

We address these issues through a careful perturbation of the potential and a corresponding control of the restricted dynamics, which together ensure that the chain is still driven toward local minima despite these obstacles. A more detailed discussion of this key step is given in Section~\ref{s:lyapunov}.

\subsection{Motivation and Background}\label{s:motivation-and-background}
Sampling from the Gibbs distribution is a central problem in a variety of fields, including statistical physics, statistics, and theoretical computer science (see, e.g.,~\cite{Krauth06, LevinPeres17, RobertCasella99, GelmanCarlinEA14}). In many applications, one is faced with multimodal distributions arising, for instance, from phase transitions in statistical mechanics models or from complex posterior landscapes in Bayesian inference (see, e.g.,~\cite{BovierdenHollander15, BrooksGelman11}). In such settings, naive sampling methods mix prohibitively slowly due to energy barriers separating the modes. This metastable behavior presents a fundamental challenge for Markov Chain Monte Carlo (MCMC) algorithms.

To address this difficulty, a number of advanced sampling methods have been developed, including annealed importance sampling~\cite{Neal01}, sequential Monte Carlo~\cite{DelMoralDoucetEA06}, parallel tempering~\cite{SwendsenWang86}, and simulated tempering~\cite{MarinariParisi92}. These methods exploit a sequence of temperatures: at high temperatures, the distribution is flattened and global mixing is facilitated, while at lower temperatures, the chain mixes efficiently within local regions near the modes.

There has been substantial progress in providing rigorous guarantees for the efficiency of such methods. A notable line of work, exemplified by~\cite{WoodardSchmidlerEA09} and building on the Markov chain decomposition framework of~\cite{MadrasRandall02}, shows that tempering-based algorithms mix rapidly provided that the state space can be decomposed into regions with fast local mixing, together with sufficient overlap between neighboring temperature levels. In particular, they establish polynomial spectral gap bounds in settings where the target distribution is a mixture of Gaussian components. Subsequent works~\cite{LeeRisteskiEA18, GeLeeEA20} use similar ideas based on Markov chain decomposition to treat mixtures of log-concave distributions, again obtaining polynomial mixing guarantees for simulated tempering. In a different direction, \cite{HanIyerEA26} prove that annealed sequential Monte Carlo methods targeting multimodal Gibbs distributions achieve polynomial computational complexity under suitable structural assumptions on the potential.

Despite these advances, most existing results rely on relatively explicit structural assumptions on the target distribution, such as mixture representations or log-concavity within each mode. In contrast, for general Gibbs distributions arising from potentials with multiple wells, much less is known about the quantitative convergence of tempering-based algorithms.

The goal of this work is to address this gap. We consider parallel tempering (and, by extension, simulated tempering) for multimodal Gibbs distributions, under assumptions that ensure a well-behaved multi-well structure but do not require prior knowledge of the locations or shapes of the wells. Our results provide rigorous polynomial spectral gap bounds in this setting, thereby extending the scope of previous analyses beyond mixture-based models.

\subsection{Plan of the paper}

In Section~\ref{s:main-result}, we state the precise assumptions (Section~\ref{s:assumptions}) and the main theorem (Section~\ref{ss:main-result}). We also list the key intermediate lemmas (Lemmas~\ref{l:lyapunov}--\ref{l:large-temperature-gap}) that will be used in its proof (Section~\ref{s:key-lemmas}).

In Section~\ref{s:lyapunov}, we prove Lemma~\ref{l:lyapunov}. In Section~\ref{s:hatp-construct}, we introduce the perturbation and establish its properties. In Section~\ref{s:lyapunov-estimates}, we derive the estimates needed to verify the Lyapunov condition and complete the proof of Lemma~\ref{l:lyapunov}. The proofs of these estimates are deferred to Section~\ref{s:lyapunov-estimate-proof}, while the properties of the perturbation are proved in Section~\ref{s:Phat-properties-proof}.

In Section~\ref{s:sg-restricted}, we prove Lemmas~\ref{l:sg-restricted} and~\ref{l:large-temperature-gap}. Section~\ref{ss:sg-restricted-proof} introduces auxiliary lemmas and uses them to establish Lemma~\ref{l:sg-restricted}. Section~\ref{s:large-temperature-gap-proof} is devoted to the proof of Lemma~\ref{l:large-temperature-gap}. The auxiliary results stated without proof in Section~\ref{ss:sg-restricted-proof} are proved in Section~\ref{s:restricted-to-minimum--normal-gap-proof}.

Finally, in Section~\ref{s:overlap}, we collect additional estimates and combine them with the preceding results to complete the proof of Theorem~\ref{t:main-theorem}.

\subsection*{Acknowledgement} The author thanks Gautam Iyer and Alan Frieze for their helpful suggestions and advice.

\section{Main result and the key lemmas}\label{s:main-result}
In this section, we state the assumptions and the main result of the paper. 
The assumptions are given in Section~\ref{s:assumptions}, the main result in Section~\ref{ss:main-result}, and the key lemmas required for the proofs are listed in Section~\ref{s:key-lemmas}. These lemmas will be proved in the subsequent sections.

\subsection{Assumptions}\label{s:assumptions}

We begin by assuming that~$U$ is a sufficiently regular double-well potential with nondegenerate critical points.

\begin{assumption}\label{a:criticalpts}
The function~$U \in C^6(\mathbb T^d,\mathbb R)$ has nondegenerate Hessian at all critical points and exactly two local minima, located at~$m_1$ and~$m_2$. 
We normalize~$U$ so that
\begin{equation}\label{e:Upositive}
    0 = U(m_1) \leq U(m_2)\,.
\end{equation}
\end{assumption}

Our next assumption concerns the saddle point separating the two minima. 
Define the saddle height between~$m_1$ and~$m_2$ as the minimal energy barrier required to transition between them:
\begin{equation}\label{e:UHatDef}
    \bar{U} = \bar{U}(m_1,m_2) 
    \defeq 
    \inf_{\omega} \sup_{t\in [0,1]} U(\omega(t))\,,
\end{equation}
where the infimum is taken over all continuous paths~$\omega \in C([0,1]; \mathbb T^d)$ such that~$\omega(0)=m_1$ and~$\omega(1)=m_2$.

\begin{assumption}\label{a: nondegeneracy}
The saddle height~$\bar U$ between~$m_1$ and~$m_2$ is attained at a unique critical point~$x=0$ of Morse index one. 
Equivalently, the Hessian~$D^2U(0)$ has eigenvalues
\begin{equation}
    -\lambda_u < 0 < \lambda_1 \leq \cdots \leq \lambda_{d-1}\,.
\end{equation}
\end{assumption}

Finally, we impose a uniform multimodality condition ensuring that both wells carry non-negligible mass in the temperature range of interest.

Let~$\Omega_i$ denote the basin of attraction of~$m_i$, defined as the set of points whose gradient flow converges to~$m_i$, i.e.,
\begin{equation}\label{e:basin-def}
    \Omega_i
    \defeq
    \set[\Big]{
        y \in \mathbb T^d \st
        \lim_{t\to\infty} y_t = m_i,\ 
        \dot{y}_t = -D U(y_t),\ y_0 = y
    }.
\end{equation}

\begin{assumption}\label{a:massRatioBound}
There exist constants~$0 \leq \underline \theta < \overline \theta \leq \infty$ and~$C_m > 0$ such that
\begin{equation}\label{e:massRatioBound}
    \inf_{\epsilon \in [\underline\theta, \overline\theta]}
    \pi_\epsilon(\Omega_i)
    \geq \frac{1}{C_m^2}\,,
    \quad i = 1,2\,.
\end{equation}
\end{assumption}

In particular, \cite[Lemma 4.4]{HanIyerEA26} shows that if~$U(m_2)=0$ then Assumption~\ref{a:massRatioBound} holds with the constant~$C_m$ only depending on~$U$.

\subsection{Main result}\label{ss:main-result}

In this subsection, we state the main result. To this end, we first define the parallel tempering chain and the Metropolis random walk.

\begin{definition}[Parallel tempering chain]\label{d:pt-def}
Let~$\mathcal X$ be a Polish space and let $(R_k, \mu_k)_{k=0}^N$ be a collection of reversible Markov chains~$R_k$ on~$\mathcal X$, each with stationary density~$\mu_k$. Define transition kernels~$R$ and~$S$ on the product space~$\mathcal X_\pt = \mathcal X^{N+1}$ by
\begin{align}
    R(x,dy) &= \frac{1}{2(N+1)}\sum_{k=0}^N R_k(x_k, dy_k)\,\delta_{x_{[-k]}}\paren*{dy_{[-k]}}\,,\\
    S(x,dy) &= \frac{1}{2N}\sum_{k=0}^{N-1} \delta_{(k,k+1)x}(dy)\,\tau(x,(k,k+1)x) \\
    &\quad + \delta_x(dy)\paren*{1 - \frac{1}{2N}\sum_{k=0}^{N-1}\tau(x,(k,k+1)x)}\,.
\end{align}
Here,
\begin{gather}
    x = (x_0, x_1, \ldots, x_N), \quad 
    x_{[-k]} = (x_0, \ldots, x_{k-1}, x_{k+1}, \ldots, x_N),\\
    (k,k+1)x = (x_0, \ldots, x_{k-1}, x_{k+1}, x_k, x_{k+2}, \ldots, x_N),
\end{gather}
and~$\tau\colon \mathcal X_\pt \times \mathcal X_\pt \to [0,1]$ is the Metropolis acceptance probability
\begin{equation}
    \tau(x,y) = \min\set*{1, \frac{\mu(y)}{\mu(x)}}, 
    \quad \text{where} \quad 
    \mu = \prod_{k=0}^N \mu_k\,.
\end{equation}
The parallel tempering kernel~$P_\pt$ is defined by
\begin{equation}
    P_\pt = SRS\,.
\end{equation}
\end{definition}

It is straightforward to verify that the reversibility of each~$R_k$ implies that~$R$ is reversible with respect to~$\mu$. Moreover, the Metropolis filter~$\tau$, together with the uniform choice of adjacent swaps, ensures that~$S$ is also reversible with respect to~$\mu$. Since both~$R$ and~$S$ are lazy, they are non-negative definite. Consequently, the parallel tempering chain~$P_\pt$ is reversible with respect to~$\mu$ and non-negative definite.

\begin{definition}[(Lazy) Metropolis random walk]\label{d:metropolis-def}
Let~$h > 0$, and let~$\pi$ be a probability density on~$\T^d$. The Metropolis random walk with step size~$h$ and stationary density~$\pi$ is the Markov kernel~$P_{h,\pi}$ on~$\mathcal X = \supp(\pi)$ defined by
\begin{equation}
   P_{h,\pi}(x,dy) = \alpha(x,y)\, r_h(x,y)\,dy 
   + \delta_x(dy)\int_{\mathcal X} (1-\alpha(x,z))\, r_h(x,z)\,dz\,,
\end{equation}
where
\begin{equation}
    \alpha(x,y) = \min\set*{1, \frac{\pi(y)}{\pi(x)}}, 
    \quad 
    r_h(x,y) = \frac{\one_{B(x,h)}(y)}{|B(x,h)|}\,.
\end{equation}
The lazy Metropolis random walk with step size~$h$ and stationary density~$\pi$ is defined by
\begin{equation}\label{e:Thpi-def}
    T_{h,\pi} = \frac{1}{2}\paren*{I + P_{h,\pi}}\,.
\end{equation}
\end{definition}

Recall that the Gibbs distribution~$\pi_\epsilon$ is defined in~\eqref{e:gibbs-d-def}. Throughout the remainder of the paper, we write~$P_{h,\epsilon}$ and~$T_{h,\epsilon}$ in place of~$P_{h,\pi_\epsilon}$ and~$T_{h,\pi_\epsilon}$, respectively.

\begin{theorem}\label{t:main-theorem}
    Let~$U$ be a potential that satisfies the Assumptions~\ref{a:criticalpts}--\ref{a:massRatioBound}.
    Then, there exist~$\eta, c_1,  c_2, C_\mathrm{BV}, c_d > 0$ such that the following holds. For any~$\bar \nu\in\paren*{0, 1/\bar\theta}$ and any given temperatures~$\underline \epsilon , \bar \epsilon$ such that~$ \underline \theta \leq \underline\epsilon < \bar \epsilon \leq \bar \theta$, choose
    \begin{equation}
        N = \ceil*{\frac{1}{\bar\nu \underline \epsilon}}\,,\quad \epsilon_0=\bar \epsilon\,,\quad \epsilon_N = \underline \epsilon\,,
    \end{equation}
    and let~$\paren*{1/\epsilon_k}_{k=0}^N$ be linearly spaced. 
    For each~$k\in\set{0, \ldots, N}$, set
    \begin{equation}\label{e:sg-local-h-choice}
        h_k = \min\set*{\eta \epsilon_k^2, 1}\,,   
    \end{equation}
    and define~$T_{h_k, \epsilon_k}$ as the lazy Metropolis random walk with step size~$h_k$ and the stationary density~$\pi_{\epsilon_k}$. 
    If we define~$P_\pt$ to be the parallel tempering chain as in Definition~\ref{d:pt-def} with the sequence~$(T_{h_k, \epsilon_k}, \pi_{\epsilon_k})_{k=0}^N$, then 
    \begin{equation}\label{e:main-theorem-swc-gap}
        \gap(P_\pt) \geq D \min\set*{c_2,  c_1 \underline\epsilon^7 } \underline\epsilon^4 \,,
    \end{equation}
    where
    \begin{equation}
        D(d,\bar\nu,\epsilon_0) \defeq c_d\bar\nu^4 \exp\paren*{-C_\mathrm{BV} C_m^2 - 2\paren*{ \bar\nu +\frac{1}{\epsilon_0}} \norm{U}_{L^\infty} } h_0^2\,.
    \end{equation}
\end{theorem}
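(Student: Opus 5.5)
The plan is to run the Markov chain decomposition framework of \cite{MadrasRandall02}, in the form adapted to parallel tempering by \cite{WoodardSchmidlerEA09}. We partition $\T^d$ into the two basins $\Omega_1,\Omega_2$ from~\eqref{e:basin-def}. This induces a partition of $\mathcal X_\pt=(\T^d)^{N+1}$ according to which basin each coordinate lies in, indexed by $\sigma\in\set{1,2}^{N+1}$, with pieces $A_\sigma=\prod_k \Omega_{\sigma_k}$. The decomposition theorem then gives
\begin{equation}
  \gap(P_\pt)\gtrsim \gap(\bar P)\,\min_\sigma \gap(P_\pt|_{A_\sigma}),
\end{equation}
where $\bar P$ is the projected chain on $\set{1,2}^{N+1}$ with weights $\prod_k\pi_{\epsilon_k}(\Omega_{\sigma_k})$.

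\textbf{Restricted gaps.} On $A_\sigma$ the kernel $R$ is a product chain, so its gap is $\frac{1}{2(N+1)}\min_k \gap(T_{h_k,\epsilon_k}|_{\Omega_{\sigma_k}})$. Since $S$ is lazy and non-negative definite, we get $\gap(SRS|_{A_\sigma})\gtrsim \gap(R|_{A_\sigma})$ by the standard comparison for $SRS$. The local gaps come from the key lemmas stated later.
\begin{itemize}
\item For the low temperatures, Lemma~\ref{l:lyapunov} (the Lyapunov condition built from the perturbed potential) combined with Lemma~\ref{l:sg-restricted} should give a bound of the form $\gap(T_{h_k,\epsilon_k}|_{\Omega_i})\ge c\min\set{c_2,c_1\epsilon_k^7}$, with the prefactor $h_0^2$ and the bounded-variation constant $e^{-C_\mathrm{BV}C_m^2}$. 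I expect $h_k=\eta\epsilon_k^2$ to give an $\epsilon^4$-type conductance factor.
\item For the highest temperature, Lemma~\ref{l:large-temperature-gap} should give a gap of order $h_0^2 e^{-2\norm{U}_\infty/\epsilon_0}$ on the whole torus. This is what lets the projected chain flip basins at level $0$.
\end{itemize}
Taking $N+1\approx 1/(\bar\nu\underline\epsilon)$ in the denominator contributes one power of $\underline\epsilon$.

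\textbf{Projected chain.} Here I would follow the Woodard--Schmidler--Huber argument. Level $0$ can flip its basin label with probability $\gtrsim \min_i\pi_{\epsilon_0}(\Omega_i)\ge C_m^{-2}$, by Assumption~\ref{a:massRatioBound}. Labels are then transported between adjacent levels through swaps. A swap is accepted with probability controlled by the overlap
\begin{equation}
  \int \min\set{\pi_{\epsilon_k}\pi_{\epsilon_{k+1}}(x,y),\,\pi_{\epsilon_k}\pi_{\epsilon_{k+1}}(y,x)}.
\end{equation}
Because $1/\epsilon_k$ is linearly spaced with increment at most $\bar\nu$, the likelihood ratio is $\exp(\pm(\epsilon_{k+1}^{-1}-\epsilon_k^{-1})(U(x)-U(y)))\ge e^{-2\bar\nu\norm{U}_\infty}$, and this is where the factor $\exp(-2\bar\nu\norm U_\infty)$ in $D$ comes from. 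With $\Theta(1)$ overlap per level, a canonical-path or comparison argument on the hypercube-like projected chain should give $\gap(\bar P)\gtrsim C_m^{-O(1)}/N^3$, roughly $\bar\nu^3\underline\epsilon^3$. In that estimate one factor of $N$ comes from the random choice of swap pair and $N^2$ from path length and congestion. Combining this with the restricted gaps should reproduce $D\min\set{c_2,c_1\underline\epsilon^7}\underline\epsilon^4$, with the $\bar\nu^4$ in $D$ collecting these factors of $N$.

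\textbf{Main obstacle.} The delicate step is the projected-chain bound. The stationary weights $\prod_k\pi_{\epsilon_k}(\Omega_{\sigma_k})$ need not be comparable across $\sigma$, although they are bounded below by Assumption~\ref{a:massRatioBound} at every level. The swap transitions between distinct $\sigma$ also have to be bounded below uniformly, and this uses the overlap estimate restricted to the basins: $\pi_{\epsilon_k}(\Omega_i)$ and $\pi_{\epsilon_{k+1}}(\Omega_i)$ must carry comparable mass. My first attempt would be to compare $\bar P$ with a product of independent two-state chains via an explicit swap-path construction.
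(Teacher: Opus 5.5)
You have the right framework, namely the Woodard--Schmidler--Huber decomposition over $\set{1,2}^{N+1}$. Your remarks on the product structure of $R$ on $A_\sigma$ and the laziness of $S$ are what \cite[Theorem 3.1]{WoodardSchmidlerEA09} does internally. The gap is in the projected chain, which is the step that carries the polynomial bound. You only conjecture it, and the form you conjecture, $\gap(\bar P)\gtrsim C_m^{-O(1)}N^{-3}$, leaves out the quantity that controls it.

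A swap-path argument moves basin labels across many levels, so it picks up products of level-to-level mass ratios. In the WSH bound these appear as a power of $\gamma_\pt=\min_i\prod_{k=1}^N\min\set{1,\pi_{k-1}(\Omega_i)/\pi_k(\Omega_i)}$. The information you propose to use does not bound this uniformly in $N$. From $\pi_{\epsilon_k}(\Omega_i)\ge C_m^{-2}$ you only get $\gamma_\pt\ge C_m^{-2N}$. From pointwise comparability of adjacent densities up to $e^{\pm\bar\nu\norm{U}_{L^\infty}}$ you only get $\gamma_\pt\ge e^{-N\bar\nu\norm{U}_{L^\infty}}\approx e^{-\norm{U}_{L^\infty}/\underline\epsilon}$. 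Either one destroys the polynomial rate.

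The missing idea is Lemma~\ref{l:gamma-swc-lb}:
$-\log\gamma_\pt\le\max_i\sum_k\pi_{k-1}(\Omega_i)^{-1}\abs{\pi_{k}(\Omega_i)-\pi_{k-1}(\Omega_i)}\le C_m^2\hat C_{\mathrm{BV}}$.
Assumption~\ref{a:massRatioBound} bounds the prefactor by $C_m^2$. The sum is bounded, uniformly in $N$, by the total variation of $\epsilon\mapsto\pi_\epsilon(\Omega_i)$ on $[\underline\theta,\bar\theta]$. Raised to the power in the WSH estimate (hence $C_{\mathrm{BV}}=5\hat C_{\mathrm{BV}}$), this is the factor $e^{-C_{\mathrm{BV}}C_m^2}$ in $D$. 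It does not come from the local gaps, as you suggest.

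Several of the remaining inputs are also misassigned.
\begin{itemize}
\item Lemma~\ref{l:sg-restricted} covers only levels with $\epsilon_k\le\hat\epsilon$, where it gives $\hat c_1h_k^4/\epsilon_k=c_1\epsilon_k^7$. Levels with $\epsilon_k\ge\hat\epsilon$ need Lemma~\ref{l:large-temperature-gap}, which gives the restricted basin gap $c_2$; this is where $\min\set{c_2,c_1\underline\epsilon^7}$ comes from.
\item Lemma~\ref{l:large-temperature-gap} says nothing about the whole torus. The level-$0$ input is $\gap(\bar T_{h_0,\epsilon_0})\ge\gap(T_{h_0,\epsilon_0})$ (Madras--Randall) together with Lemma~\ref{l:general-eh-gap}. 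That lemma uses Holley--Stroock against the uniform-target walk plus Lov\'asz--Simonovits, giving $\hat c_d e^{-2\norm{U}_{L^\infty}/\epsilon_0}h_0^2$. This is where $h_0^2$ and $e^{-2\norm{U}_{L^\infty}/\epsilon_0}$ enter.
\item The level-$0$ flip rate of the projected chain is this gap, not something of order $C_m^{-2}$.
\end{itemize}

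With $\gamma_\pt$, $\delta_\pt\ge e^{-\bar\nu\norm{U}_{L^\infty}}$ (Lemma~\ref{l:delta-swc-lb}), the restricted gaps and the level-$0$ gap in hand, the paper simply applies \cite[Theorem 3.1]{WoodardSchmidlerEA09} with $J=2$. The factor $\bar\nu^4\underline\epsilon^4$ is just that theorem's power of $(N+1)^{-1}$. There is no separate ``$\epsilon^4$ conductance factor'' coming from $h_k$.
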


\subsection{Key lemmas}\label{s:key-lemmas}
In this subsection, we introduce the key lemmas that will be used to prove Theorem~\ref{t:main-theorem}. 
We decompose the state space~$\T^d$ into the basins of attraction~$\Omega_1$ and~$\Omega_2$, and estimate the spectral gap of the chain restricted to each basin (Lemmas~\ref{l:sg-restricted}--\ref{l:large-temperature-gap}) via the Lyapunov drift condition in Lemma~\ref{l:lyapunov}. We then show in Section~\ref{s:overlap} that the random walk at the highest temperature level~$\epsilon_0$ has a spectral gap on the entire space~$\T^d$, and that the tempering sequence of stationary measures has sufficient overlap. Finally, we apply the result of~\cite{WoodardSchmidlerEA09} to combine these estimates and conclude that the parallel tempering chain~$P_\pt$ has a spectral gap that is polynomial in the final temperature~$\underline \epsilon$.

To formalize this approach, we first define the restriction of a Markov chain. Intuitively, given a subset~$A$ of~$\mathcal X$ and~$x\in A$, the restricted chain~$P\vert_A(x,\cdot)$ proceeds by sampling~$X \sim P(x,\cdot)$ and accepting the move if~$X \in A$, and otherwise rejecting it (i.e., staying at~$x$).

\begin{definition}[Restriction of a Markov chain]\label{d:Markov-chain-restriction}
Let~$P$ be a transition kernel on~$\mathcal X$, reversible with respect to a probability measure~$\mu$, and let~$A \subset \mathcal X$ be measurable. The restriction of~$P$ to~$A$ is the Markov kernel on~$A$ defined by
\begin{equation}
    P\vert_A (x,dy) = P(x,dy) + \delta_x(dy)\, P(x, \mathcal X \setminus A)\,, \quad x \in A.
\end{equation}
\end{definition}

It is straightforward to verify that~$P\vert_A$ is reversible with respect to the conditioned measure~$\mu\vert_A$, where~$\mu\vert_A = \mu(\cdot \cap A)/\mu(A)$. Moreover, if~$\pi(x) > 0$ for all~$x \in A$, then the restriction of a Metropolis random walk with stationary distribution~$\pi$ to~$A$ coincides with the Metropolis random walk with stationary distribution~$\pi\vert_A$.

\smallskip

The first key result is a Lyapunov drift condition for the restricted chain. This will play a crucial role in deducing lower bounds on the spectral gap of the restricted chain. Establishing this estimate constitutes the main technical component of the paper. We prove it in Section~\ref{s:lyapunov}.

\begin{lemma}[Lyapunov drift for a perturbed potential]\label{l:lyapunov}
Make the Assumptions~\ref{a:criticalpts}--\ref{a: nondegeneracy} on~$U$.
There exist constants~$\hat\epsilon, \eta, \lambda, \gamma, a, b, C_P > 0$ such that for any~$\epsilon \le \hat\epsilon$, we have a perturbed potential $\hat U\colon \T^d \to [0,\infty)$, depending on~$\epsilon$, with the following properties.
\begin{enumerate}
    \item For all~$x\in B(m_1, a \sqrt{\epsilon})$,
    \begin{equation}\label{e:Utilde-equals-U}
        \hat U(x) = U(x)\,.
    \end{equation}
    \item Perturbed potential~$\hat U$ is close to~$U$ in~$L^\infty$ in the sense that
    \begin{equation}\label{e:L^infty-perturb}
        \norm{\hat U - U}_{L^\infty(\T^d)} \leq C_P\epsilon\,.
    \end{equation}
    \item Define $W \colon \T^d \to [1/10,\infty)$ as
\begin{equation}\label{e:Wdef}
W(x) = \exp(\gamma\hat U(x))\,.
\end{equation}
Then, for all $h$ satisfying $h/\epsilon^2 \le \eta$, 
\begin{equation}\label{e:lyapunov}
\hat Q_{h,\epsilon} W(x)
\le
(1-\lambda \gamma h^2) W(x)
+ b \one_{\Omega_1 \setminus B(m_1, a\sqrt{\epsilon})}\,,
\qquad
\forall x \in \Omega_1 .
\end{equation}

Here $\hat Q_{h,\epsilon} = \hat P_{h,\epsilon}\vert_{\Omega_1}$ denotes the restriction of the chain $\hat P_{h,\epsilon}$ to $\Omega_1$, where $\hat P_{h,\epsilon}$ is the Metropolis random walk with step size $h$ and the stationary density $\hat\pi_\epsilon \propto \exp(-\hat U/\epsilon)$.
\end{enumerate}
\end{lemma}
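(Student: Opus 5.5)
We will construct $\hat U$ by modifying $U$ only where the naive drift computation fails: near the saddle $0$ on $\partial\Omega_1$, and in a thin layer along $\partial\Omega_1$ where restricted proposals are rejected. Away from these sets we keep $\hat U=U$, and in particular on $B(m_1,a\sqrt\epsilon)$, which gives (1).

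\textbf{Drift computation.} The first step is a second-order expansion of the drift of $W=e^{\gamma\hat U}$ under the Metropolis random walk $\hat P_{h,\epsilon}$, at an interior point $x$ with $\dist(x,\partial\Omega_1)>h$. Write $y=x+h\zeta$ and $\Delta=\hat U(y)-\hat U(x)$. Then
\begin{equation*}
\hat P_{h,\epsilon}W(x)-W(x)=W(x)\,\mathbb E_\zeta\bigl[(e^{\gamma\Delta}-1)\min\{1,e^{-\Delta/\epsilon}\}\bigr].
\end{equation*}
Split by the sign of $\Delta\approx h\,\nabla\hat U\cdot\zeta$.
\begin{itemize}
\item When $\Delta<0$, the move is always accepted and contributes $\approx\gamma\Delta$.
\item When $\Delta>0$, the acceptance factor $e^{-\Delta/\epsilon}$ suppresses the contribution.
\end{itemize}
By symmetry of $\zeta$, the first-order terms cancel up to a factor. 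One gets a leading term of order $-\gamma h|\nabla\hat U|\,\mathbb E(\zeta\cdot e)^+\,(1-O(h|\nabla\hat U|/\epsilon))$, plus second-order terms $O(\gamma h^2(|\Delta\hat U|+\gamma|\nabla\hat U|^2))$.

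\textbf{Choice of regimes.} We take $\gamma$ a small constant and $h\le\eta\epsilon^2$. This gives $h|\nabla\hat U|/\epsilon\ll1$. It follows that the drift is $\le-c\gamma h|\nabla\hat U|+C\gamma h^2$.

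\textbf{Near $m_1$.} Nondegeneracy gives $|\nabla U(x)|\gtrsim|x-m_1|\gtrsim a\sqrt\epsilon$. Choosing $a$ large makes $c\gamma h a\sqrt\epsilon\ge C\gamma h^2+\lambda\gamma h^2$, since $h\le\eta\epsilon^2\ll\sqrt\epsilon$. This yields the contraction factor $1-\lambda\gamma h^2$. Inside $B(m_1,a\sqrt\epsilon)$, we absorb the bounded excess into $b$. Here $W\le e^{\gamma C a^2\epsilon}$, so the excess is bounded.

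\textbf{Away from critical points in $\Omega_1$.} The gradient is bounded below by compactness. The only other critical points in $\overline{\Omega_1}$ are on $\partial\Omega_1$, where the main one is the saddle $0$. Other boundary critical points are handled the same way.

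\textbf{Boundary perturbation (the main obstacle).} The hard part is the region near $\partial\Omega_1$, and especially near the saddle. Two things go wrong there.
\begin{itemize}
\item For restricted proposals, the rejections outside $\Omega_1$ break the cancellation. However, they only remove moves that go outward. Along $\partial\Omega_1$, $\nabla U$ is tangent, and the unstable direction points out of $\Omega_1$, so outward moves increase $\hat U$. Removing them therefore helps, provided $\hat U$ is arranged to increase toward the boundary.
\item Near $0$, $|\nabla U|$ is small, so the gradient drift vanishes.
\end{itemize}
The remedy is the perturbation. In a neighborhood of size $O(\sqrt\epsilon)$ (or a fixed small size) around $0$, we replace $U$ by $U+\epsilon\,\phi$. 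Here $\phi$ is built in Morse coordinates $U\approx-\frac{\lambda_u}{2}x_u^2+\sum\frac{\lambda_i}{2}x_i^2$, for instance $\phi=-\kappa\,\psi(x_u/\sqrt\epsilon)$ with a cutoff $\psi$. The design has two goals.
\begin{itemize}
\item The gradient of $\hat U$ stays bounded below by $c\sqrt\epsilon$ on the inner side of $\partial\Omega_1$ near $0$.
\item $\hat U$ is increasing toward $\partial\Omega_1$, so the boundary rejection only removes uphill moves.
\end{itemize}
Since the modification has amplitude $O(\epsilon)$, this gives (2) with $C_P$ depending on $\|\phi\|_\infty$. The same $\sqrt\epsilon$ gradient lower bound then beats $h^2$ as before.

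\textbf{Conclusion.} The remaining work is routine. Smoothness from $U\in C^6$ controls the Taylor remainders. We collect constants, choose $a,\gamma,\eta$ and then $\hat\epsilon$ small, and set $b$ as the supremum of $\hat Q_{h,\epsilon} W$ over the exceptional set. The resulting $b$ is finite because $\hat U$ is bounded.
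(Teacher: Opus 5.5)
\textbf{The drift expansion is wrong in your regime, and the error hides the real difficulty.} When $h|\nabla\hat U|/\epsilon\ll 1$, the $O(h)$ terms cancel exactly by the symmetry of $\zeta$, not ``up to a factor''. Write your $\Delta\approx h|\nabla\hat U|s$ with $s=\zeta\cdot e$. Then the first-order part is $-\gamma h|\nabla\hat U|\,\mathbb{E}\bigl[s^+\bigl(1-e^{-h|\nabla\hat U|s/\epsilon}\bigr)\bigr]\approx-\gamma\beta h^2|\nabla\hat U|^2\,\mathbb{E}[(s^+)^2]$. So the factor is $O(h|\nabla\hat U|/\epsilon)$, not $1-O(h|\nabla\hat U|/\epsilon)$.

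The true interior drift, as in Lemma~\ref{l:lyapunov-to-generator}, is $\gamma h^2\sigma^2\bigl(\tfrac12\Delta\hat U-\tfrac12\beta|\nabla\hat U|^2\bigr)$ up to small errors. The Laplacian therefore enters at the same order as the gradient. Away from critical points this is harmless, since $|\nabla U|\ge ca\sqrt\epsilon$ gives $\beta|\nabla U|^2\ge c^2a^2$ and large $a$ wins. At the saddle, however, it is the whole problem. A bound $|\nabla\hat U|\ge c\sqrt\epsilon$ does not ``beat $h^2$''; it only yields $-\tfrac12 c^2\gamma h^2\sigma^2$ against $\tfrac12\gamma h^2\sigma^2\Delta\hat U$. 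Morse index one allows $\Delta U(0)=-\lambda_u+\sum_i\lambda_i>0$.

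This is exactly what the paper's perturbation addresses. $\hat P$ replaces the stable block of $D^2U(0)$ by $\kappa P_s$, so $\Delta\hat U(0)=-\lambda_u+(d-1)\kappa<0$. Near $\partial\Omega_1$ even the exit-weighted Laplacian $Q'$ is negative, which is why the paper needs $\kappa\le\lambda_u w/(2(d-1))$.

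Your proposal never states this requirement, and the $\phi$ you write does not meet it:
\begin{itemize}
\item a cutoff $\psi\equiv1$ near $0$ makes $\phi$ constant at the saddle;
\item there is no tangential localization;
\item no relation between the slope and $\Delta U(0)$ is imposed.
\end{itemize}
A tilt pushing the critical point of $\hat U$ into $\Omega_2$ might be made to work. It would need a normal slope $c\sqrt\epsilon$ with $c^2$ dominating $\Delta U(0)$ plus the curvature of the cutoffs. The cutoff scales would also have to be chosen so the transition zones do not reintroduce small gradients. None of this is in the proposal.

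\textbf{The boundary argument away from the saddle is also incorrect.} Write $D=\hat U(Y)-\hat U(x)$ as in the paper. Where $\hat U=U$, $\nabla U$ is tangent to $\partial\Omega_1$, so $\nabla U(x)\cdot n=O(h)$ within distance $h$ of the boundary. Exiting proposals are then uphill and downhill in nearly equal proportion. Rejecting the downhill ones removes negative drift, so ``removing outward moves helps'' holds only where a genuine outward slope has been built in.

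What must be shown is twofold:
\begin{itemize}
\item the restricted chain retains a fixed fraction of the rejection-driven term $-\gamma\beta\,\mathbb{E}[D^2\one_{\{D>0\}}]$;
\item the first-order exit term is only $O(h^2|\nabla\hat U|)+O(h^2)$.
\end{itemize}
The paper handles the first with $\mathbb{E}[D^2\one_{\{D>0\}\cap E}]\le\tfrac14 h^2\sigma^2|D\hat U|^2+O(h^{2.75})$, so at most half the rejection drift is lost. It handles the second with Lemma~\ref{l:Dexit-equality} for $\mathbb{E}[D\one_E]$, then absorbs the errors by Young's inequality using $|D\hat U|\ge C_U a\sqrt\epsilon$. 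The paper also builds $\hat P$ in the coordinates $(\xi(x),x-\xi(x))$ so that $D\hat P\cdot n=0$ and $n^TD^2\hat P\,n=0$ on $\partial\Omega$. This keeps the boundary structure intact near the saddle, which your tilt would instead have to replace with a separate argument.
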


The next two lemmas provide lower bounds on the spectral gap of the Metropolis random walk restricted to a basin, in the regimes of small and large~$\epsilon$, respectively. Their proofs are given in Section~\ref{s:sg-restricted}.

\begin{lemma}[Spectral gap of restricted chain for small~$\epsilon$]\label{l:sg-restricted}
Let~$\hat\epsilon, \eta$ be as in Lemma~\ref{l:lyapunov}. 
Then there exists a constant~$\hat c_1 > 0$, independent of~$\epsilon$ and~$h$, such that for all~$\epsilon \le \hat\epsilon$ and all~$h$ satisfying~$0<h/\epsilon^2 \le \eta$, we have
\begin{equation}\label{e:sg-restricted}
   \gap(Q_{h, \epsilon}) \ge \hat c_1 \frac{h^4}{\epsilon} .
\end{equation}
Here, $Q_{h,\epsilon} = P_{h,\epsilon}\vert_{\Omega_1}$ denotes the restriction of the chain $ P_{h,\epsilon}$ to $\Omega_1$, where $P_{h,\epsilon}$ is the Metropolis random walk with step size $h$ and the stationary density $\pi_\epsilon$.
\end{lemma}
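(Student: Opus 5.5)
The plan is to run a Lyapunov-plus-local-Poincaré argument for the perturbed chain $\hat Q_{h,\epsilon}$, and then transfer the resulting gap to $Q_{h,\epsilon}$ by comparison. Concretely:
\begin{enumerate}
    \item Derive a Poincaré inequality for $\hat Q_{h,\epsilon}$ from the drift condition \eqref{e:lyapunov} of Lemma~\ref{l:lyapunov} together with a local Poincaré inequality on the small ball $B(m_1,a\sqrt\epsilon)$.
    \item Compare Dirichlet forms and variances of $\hat Q_{h,\epsilon}$ and $Q_{h,\epsilon}$.
\end{enumerate}
For the comparison step, \eqref{e:L^infty-perturb} gives $\norm{\hat U-U}_{L^\infty}\le C_P\epsilon$. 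Hence the densities $\hat\pi_\epsilon|_{\Omega_1}$ and $\pi_\epsilon|_{\Omega_1}$ are comparable up to factors $e^{\pm 2C_P}$. The Metropolis acceptance ratios are comparable up to $e^{\pm 2C_P}$ as well. So $\gap(Q_{h,\epsilon})\ge e^{-cC_P}\gap(\hat Q_{h,\epsilon})$ with $c$ an absolute constant, and it suffices to bound $\gap(\hat Q_{h,\epsilon})$.

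For the first step I will use the standard Lyapunov-to-Poincaré argument (in the style of Bakry--Barthe--Cattiaux--Guillin, discrete-time version). Write $K=B(m_1,a\sqrt\epsilon)$. From \eqref{e:lyapunov}, for $x\in\Omega_1\setminus K$ we get
\begin{equation*}
    \frac{(I-\hat Q)W}{W}\ge \lambda\gamma h^2 - \frac{b}{W}.
\end{equation*}
On $\Omega_1\setminus K$ we have $U\ge c a^2\epsilon$ by nondegeneracy of the minimum, so $\hat U\ge ca^2\epsilon - C_P\epsilon$. I choose $\gamma$, or equivalently absorb the indicator in a different way, so that the effective drift $\lambda\gamma h^2$ dominates outside a possibly enlarged ball $K'$ of radius $\sim\sqrt\epsilon$.

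Then, for $f$ with the mean on $K'$ subtracted, the identity
\begin{equation*}
    \int f^2\frac{(I-\hat Q)W}{W}\,d\hat\pi \le \hat{\mathcal E}(f)
\end{equation*}
(valid for reversible $\hat Q$ and positive $W$) gives
\begin{equation*}
    \lambda\gamma h^2\int_{\Omega_1\setminus K'} f^2 \lesssim \hat{\mathcal E}(f) + C\int_{K'} f^2 .
\end{equation*}
The last term is controlled by a local Poincaré inequality on $K'$.

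On $K'$ the potential is within $O(\epsilon)$ of its minimum, so $\hat\pi_\epsilon$ is comparable, with constants independent of $\epsilon$, to uniform measure on a ball of radius $r\sim\sqrt\epsilon$. The Metropolis walk with step $h\le r$ on such a ball is a ball walk on a convex set. Its spectral gap is of order $h^2/r^2\sim h^2/\epsilon$ (conductance bounds for ball walks, e.g.\ Kannan--Lovász--Simonovits). Combining gives
\begin{equation*}
    \var(f)\lesssim \paren*{\frac{1}{\lambda\gamma h^2}}\paren*{1+\frac{C\epsilon}{h^2}}\hat{\mathcal E}(f),
\end{equation*}
that is, a gap of order $h^4/\epsilon$, which matches \eqref{e:sg-restricted}.

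The main obstacle is making the constants in the drift-to-Poincaré step genuinely independent of $\epsilon$. The bound $b$ on the ``bad'' set must be divided by $W$, which is only bounded below near $K$. The ratio of $\sup_{K'}W$ to $\inf W$ must stay $O(1)$, which requires $\gamma\hat U=O(1)$ on $K'$. This forces $\gamma\lesssim 1/\epsilon$, and the exact scaling of $\gamma$ in Lemma~\ref{l:lyapunov} must be tracked; I would first check whether $\gamma\sim 1/\epsilon$ is what the lemma provides, which is consistent with the $1/\epsilon$ in the claimed bound. A secondary difficulty is that $K$ may be cut by $\partial\Omega_1$ or that $h$ is not small relative to $r$. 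Both are handled by taking $\eta$ small, so that $h\le\eta\epsilon^2\ll\sqrt\epsilon$.
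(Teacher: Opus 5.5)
Your architecture matches the paper's. You use a drift-plus-local-Poincaré bound for $\hat Q_{h,\epsilon}$; the paper packages this as Lemma~\ref{l:mrw-lyapunov}, giving $\gap(\hat Q_{h,\epsilon})\ge \alpha\lambda\gamma h^2/(b+\alpha)$, where $\alpha$ is the gap of the chain restricted to $B(m_1,a\sqrt\epsilon)$. You then use a local gap $\alpha\gtrsim h^2/\epsilon$, and Holley--Stroock with \eqref{e:L^infty-perturb} to pass to $Q_{h,\epsilon}$. Your route to the local gap is a valid substitute for the paper's comparison with $N(m_1,\epsilon D^2U(m_1)^{-1})$ and Kannan--Li: you compare with the uniform measure on the ball at a cost $e^{-O(a^2)}$, then use ball-walk conductance. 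However, the step you single out as the main obstacle is mishandled, and as written it would fail.

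\textbf{The failing step.} You read the indicator in \eqref{e:lyapunov} literally as $\one_{\Omega_1\setminus B(m_1,a\sqrt\epsilon)}$. You then propose to dominate $b/W$ outside an enlarged ball $K'$ by tuning $\gamma$, possibly $\gamma\sim 1/\epsilon$. Lemma~\ref{l:lyapunov} gives no such freedom: $\gamma<\hat\gamma$ with $\hat\gamma$ independent of $\epsilon$, since smallness of $\gamma$ is used in expanding $e^{\gamma D}$ in Lemma~\ref{l:lyapunov-to-generator}. Hence $1\le W\le e^{\gamma\norm{\hat U}_{L^\infty}}=O(1)$, so $b/W$ is of order one everywhere, while $\lambda\gamma h^2\le\lambda\gamma\eta^2\epsilon^4$. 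No ball of radius $\sim\sqrt\epsilon$ does what you want. Your attribution of the $1/\epsilon$ is also off. With $\gamma\sim1/\epsilon$ your own bookkeeping would give $h^4/\epsilon^2$; in the paper the factor $1/\epsilon$ comes solely from $\alpha$.

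\textbf{The fix.} The indicator must be read as $\one_{B(m_1,a\sqrt\epsilon)}$, for three reasons:
\begin{itemize}
\item This is what the proof of Lemma~\ref{l:lyapunov} actually establishes: strict drift on $\Omega_1\cap B(m_1,a\sqrt\epsilon)^c$ and a bounded increment on the ball.
\item It is the form Lemma~\ref{l:mrw-lyapunov} requires.
\item The literal version is impossible at $x=m_1$. There $W(m_1)=1=\min W$, which forces $\hat Q_{h,\epsilon}W(m_1)\ge 1>(1-\lambda\gamma h^2)W(m_1)$.
\end{itemize}
With this reading and $W\ge 1$, you get $(I-\hat Q_{h,\epsilon})W/W\ge\lambda\gamma h^2-b\one_K$ on all of $\Omega_1$, with $K=B(m_1,a\sqrt\epsilon)$. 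So your displayed inequality holds with $K'=K$ and $C=b$, with no enlargement and no tuning of $\gamma$. Let $c$ be the $K$-mean of $f$. Combine this with $\norm{(f-c)\one_K}_{L^2}^2\le\alpha^{-1}\hat{\mathcal E}(f)$ and with $\alpha\le 2$. This gives $\gap(\hat Q_{h,\epsilon})\ge\lambda\gamma\alpha h^2/(b+2)\gtrsim h^4/\epsilon$, as in the paper.
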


\begin{lemma}[Spectral gap of restricted chain for large~$\epsilon$]\label{l:large-temperature-gap}
Let~$\hat\epsilon, \eta$ be as in Lemma~\ref{l:lyapunov}, and let~$\bar h$ be a constant such that
$\eta \hat\epsilon^2 \leq  \bar h$.
Then, there exists a constant~$c_2(\eta, \hat\epsilon, \bar h) > 0$ such that for all~$\epsilon \ge \hat\epsilon$ and all~$h$ satisfying~$\eta \hat\epsilon^2 \le h \le \bar h$,
\begin{equation}\label{e:large-temperature-gap}
    \gap(Q_{h, \epsilon}) \ge c_2 \,.
\end{equation}
Here, $Q_{h, \epsilon}$ is defined as in Lemma~\ref{l:sg-restricted}.
\end{lemma}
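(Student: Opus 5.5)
The plan is a compactness argument at fixed scale, using a conductance (Cheeger-type) bound for the restricted Metropolis walk on $\Omega_1$. The range $\epsilon \ge \hat\epsilon$ is unbounded, so I would first reduce to bounded $\epsilon$.

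\textbf{Step 1: uniform density bounds.} For any $\epsilon \ge \hat\epsilon$ the density ratio satisfies
\[
\pi_\epsilon(y)/\pi_\epsilon(x) = \exp\paren{-(U(y)-U(x))/\epsilon} \ge \exp\paren{-\norm{U}_{L^\infty}/\hat\epsilon}.
\]
So the Metropolis acceptance probability $\alpha$ is bounded below by $e^{-\norm{U}_\infty/\hat\epsilon}$, uniformly in $\epsilon\ge\hat\epsilon$. Moreover, $\pi_\epsilon\vert_{\Omega_1}$ is comparable to normalized Lebesgue measure on $\Omega_1$ with constants $e^{\pm\norm{U}_\infty/\hat\epsilon}$. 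By the standard comparison of Dirichlet forms and variances, this gives
\[
\gap(Q_{h,\epsilon}) \ge e^{-3\norm{U}_\infty/\hat\epsilon}\,\gap(Q^0_h),
\]
where $Q^0_h$ is the uniform-proposal ball walk (all moves accepted) restricted to $\Omega_1$, reversible with respect to $\Unif(\Omega_1)$. This removes the dependence on $\epsilon$ entirely.

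\textbf{Step 2: gap of the ball walk on $\Omega_1$.} It remains to show that $\inf_{h\in[\eta\hat\epsilon^2,\bar h]}\gap(Q^0_h)>0$. Since $h$ is bounded below by $h_*=\eta\hat\epsilon^2$, I would use a canonical-path / covering argument rather than any isoperimetric fine structure.

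Cover the closure of $\Omega_1$ by finitely many balls of radius $h_*/4$ centered in $\Omega_1$, and connect them through $\Omega_1$. Under Assumptions~\ref{a:criticalpts}--\ref{a: nondegeneracy}, $\Omega_1$ is an open, connected set whose boundary is a piecewise-smooth stable manifold of saddles, so such chains of overlapping balls exist. For any two points $x,y$, a path through a bounded number $L$ of cells, each step of length $<h$, gives the Poincaré bound
\[
\var(f) \le C(L,|\Omega_1|,h_*)\,\mathcal E^0_h(f),
\]
via the usual two-step averaging over intermediate cells. Here the one-step kernel density is at least $|B(0,h)|^{-1}\ge |B(0,\bar h)|^{-1}$ on those cells, and the cells have measure bounded below by $h_*$. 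All constants depend only on $\eta,\hat\epsilon,\bar h$ and $U$.

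\textbf{Main obstacle.} The delicate step is the geometry of $\Omega_1$ near its boundary. Cells touching $\partial\Omega_1$ must have $|B(z,h_*/4)\cap\Omega_1|$ bounded below, and consecutive cells must be joinable inside $\Omega_1$. Thin cusps (for instance, near where stable manifolds of different saddles meet) could break this. I would handle it by using the interior cone condition for $\Omega_1$, which follows since $\Omega_1$ is a finite union of stable manifolds of hyperbolic critical points of the $C^6$ Morse potential. This makes $\Omega_1$ locally Lipschitz, so each such intersection has measure $\ge c h_*^d$.

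If that fails, a fallback is to note that $x\mapsto Q^0_h$ is continuous in $h$ in operator norm on $L^2(\Unif(\Omega_1))$. Then it suffices to show $\gap(Q^0_h)>0$ for each fixed $h$ and conclude by compactness of $[h_*,\bar h]$. Positivity for a fixed $h$ follows from irreducibility plus the fact that $(Q^0_h)^{n}$ has a density bounded below on $\Omega_1\times\Omega_1$ for some $n$, which is Doeblin's condition.
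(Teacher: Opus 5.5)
Your proof is correct, but it takes a different route from the paper. Both arguments begin the same way. The paper also uses Holley--Stroock to remove the $\epsilon$-dependence, but it compares $\pi_\epsilon\vert_{\Omega_1}$ with $\pi_{\hat\epsilon}\vert_{\Omega_1}$ rather than with Lebesgue measure. The ratio $\tilde\pi_\epsilon/\tilde\pi_{\hat\epsilon}$ lies in $[1,e^{\norm{U}_{L^\infty}/\hat\epsilon}]$, which gives the same factor $e^{-2\norm{U}_{L^\infty}/\hat\epsilon}$. The paper then uses $B(x,\eta\hat\epsilon^2)\subset B(x,h)$ to bound the Dirichlet form at step $h$ below by $(\eta\hat\epsilon^2/\bar h)^d$ times the one at step $\eta\hat\epsilon^2$; the stationary measure is the same, so the variances agree. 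Finally, it applies Lemma~\ref{l:sg-restricted} at the endpoint $\epsilon=\hat\epsilon$, $h=\eta\hat\epsilon^2$.

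So the paper bootstraps the large-temperature case from the small-temperature Lyapunov analysis. It obtains an explicit constant in terms of $\hat c_1,\eta,\hat\epsilon,\bar h,d,\norm{U}_{L^\infty}$ and needs no geometric input about $\Omega_1$. Your Step~2 instead proves a Poincar\'e inequality for the uniform ball walk on $\Omega_1$ directly. This is independent of Lemmas~\ref{l:lyapunov} and~\ref{l:sg-restricted} and uses only that $\Omega_1$ is open, bounded and connected. The cost is a non-explicit constant, depending on a covering number of $\Omega_1$ at scale $\eta\hat\epsilon^2$ and the smallest cell volume, or purely qualitative in your compactness fallback. Since Theorem~\ref{t:main-theorem} only needs some $c_2>0$, either route suffices.

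The ``main obstacle'' you flag is not actually there, so you can drop the appeal to an interior cone condition. Your justification for that condition is also off: $\Omega_1$ is the stable manifold of $m_1$, and it is $\partial\Omega_1$ that is assembled from stable manifolds of the other critical points. Nothing in that description forces those pieces to meet in a Lipschitz way.

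At a fixed scale $h_*=\eta\hat\epsilon^2$ nothing quantitative about the boundary is needed:
\begin{itemize}
\item Cover $\overline{\Omega_1}$ by finitely many balls $B(z_i,h_*/5)$ with $z_i\in\Omega_1$.
\item Each cell $A_i=B(z_i,h_*/5)\cap\Omega_1$ is a nonempty open set, so $\min_i\abs{A_i}>0$ automatically. This replaces your ``measure bounded below by $h_*$''.
\item The graph with edges $\abs{z_i-z_j}<3h_*/5$ is connected, because $\Omega_1$ is path-connected.
\item For adjacent cells, $\abs{x-y}<h_*\le h$ for all $x\in A_i$, $y\in A_j$.
\end{itemize}
The restricted chain only requires the endpoint of a move to lie in $\Omega_1$, so no segment between cells needs to stay inside the basin.

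Finally, the pointwise comparison $\one_{\set{\abs{x-y}<h}}/\abs{B(0,h)}\ge \one_{\set{\abs{x-y}<h_*}}/\abs{B(0,\bar h)}$ transfers everything from scale $h_*$ to every $h\in[h_*,\bar h]$. This is exactly the source of the paper's factor $(\eta\hat\epsilon^2/\bar h)^d$, and it also makes your continuity-in-$h$ fallback unnecessary.
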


\section{Construction of Lyapunov function (Proof of Lemma~\ref{l:lyapunov})}\label{s:lyapunov}

In this section, we prove Lemma~\ref{l:lyapunov}. Before presenting the proof, we briefly explain the main idea and the main difficulty in constructing the Lyapunov function.

Functions of the form~$\exp(\gamma U)$ or~$U^m$, where~$\gamma > 0$ and~$m\in \mathbb N$, are commonly used as Lyapunov functions for Langevin diffusions and their discretized Markov chains when~$U$ itself is the potential (see, e.g.,~\cite{RobertsTweedie96, RobertsTweedie96a, MattinglyStuartEA02, BouRabeeVandenEijnden10, BouRabeeHairer13}). In particular, for~$\epsilon \leq 1/2$, we observe that
\begin{equation}
    \mathcal L_{\epsilon} e^{U} 
    = \bigl((-1+\epsilon)\abs{DU}^2 + \epsilon \Delta U\bigr)e^U 
    \leq \frac{1}{2}\bigl(\mathcal L_{2\epsilon} U\bigr)e^{U}\,,
\end{equation}
where~$\mathcal L_\epsilon$ is the generator of the overdamped Langevin diffusion at temperature~$\epsilon$,
\begin{equation}
    \mathcal L_\epsilon = -DU \cdot D + \epsilon \Delta\,.
\end{equation}
Suppose for the moment that~$U$ attains a local maximum at the saddle point~$x=0$, or at least that~$\Delta U(0) < 0$. Then there exist constants~$\lambda, C > 0$ such that 
\begin{equation}
\mathcal L_{2\epsilon} U(x) < -\lambda \epsilon\,,\quad \forall x \notin B(m_1, C\sqrt{\epsilon}) \cup B(m_2, C\sqrt{\epsilon})\,.
\end{equation}
Indeed, near the saddle, $\Delta U$ is negative, while away from both the saddle and the local minima, $\abs{DU}$ grows linearly and~$\Delta U$ remains bounded above on the compact state space~$\T^d$. Consequently,
\begin{equation}
    \mathcal L_{\epsilon} e^U 
    \leq -\frac{\lambda}{2}\epsilon e^U\,, 
    \quad \forall x \notin B(m_1, C\sqrt{\epsilon}) \cup B(m_2, C\sqrt{\epsilon}),
\end{equation}
which yields the desired Lyapunov drift condition. From a probabilistic perspective, this reflects the mechanism that near the saddle, the random perturbation provides a small push that helps the particle escape the local maximum, and once it moves away, the drift~$DU$ drives it toward one of the local minima.

However, under Assumption~\ref{a: nondegeneracy}, the condition~$\Delta U(0) < 0$ is not guaranteed, since the Hessian~$D^2U(0)$ may have sufficiently large positive eigenvalues. To address this issue, we introduce a local perturbation~$\hat P$ and define a perturbed potential~$\hat U$ as in~\eqref{e:Utilde-def}. The perturbation is designed so that~$\hat U$ behaves \emph{almost} like a local maximum at the saddle~$x=0$, with eigenvalues~$-\lambda_u, \kappa, \kappa, \ldots, \kappa$ for a small parameter~$\kappa$ (see~\eqref{e:Utilde-almost-maximum-at-saddle}). In particular, this ensures that~$\Delta \hat U(0) < 0$, allowing us to recover the above Lyapunov argument.

We remark that similar perturbation ideas have been used in~\cite[Section 3]{MenzSchlichting14} to establish local spectral gap estimates. However, their construction modifies the basin of attraction, making it difficult to control the behavior of the Lyapunov function near the boundary of the original basin. In contrast, our perturbation is designed to preserve the relevant boundary properties (see Lemma~\ref{l:Utilde-vanish}), which is essential for our analysis.

Finally, an additional difficulty arises from the fact that the Metropolis random walk is restricted to a basin. When the particle is very close to the boundary, one must ensure that there is a non-negligible probability of proposing moves that remain inside the basin, and that the corresponding expected decrease in the potential does not vanish. By the compactness of~$\T^d$ and the regularity of the boundary, the relevant geometric quantities (such as normals and curvature) are uniformly bounded, which allows us to control these probabilities and expectations and compare them to the unrestricted case. This issue will appear naturally in the proof of Lemma~\ref{l:lyapunov-near-saddle-boundary}.

We also emphasize that the magnitude of the perturbation must remain sufficiently small, as in~\eqref{e:Utilde-equals-U}. Indeed, a larger perturbation combined with a direct application of the Holley--Stroock Lemma~\ref{l:holley-stroock} would lead to a spectral gap that is exponentially small in~$\epsilon$, which is too weak for our purposes.

\subsection{Construction of the perturbed potential~$\hat U$}\label{s:hatp-construct}
In this subsection, we construct a small perturbation~$\hat P$ and state the properties required to establish the Lyapunov drift condition for the perturbed potential~$\hat U$, defined in~\eqref{e:Utilde-def}. The proofs of these properties are deferred to the final subsection.

We begin by collecting several well-known regularity properties of the basin of attraction and related objects, which are needed to define the perturbation and state its properties. Throughout the remainder of this section, we drop the index~$1$ from the basin of attraction~$\Omega_1$ and simply write~$\Omega$ for notational brevity.

\begin{lemma}\label{l:facts-on-boundaries}
    Under Assumptions~\ref{a:criticalpts}--\ref{a: nondegeneracy} on~$U$, the following properties hold.
    \begin{enumerate}
        \item\label{i: boundary-regularity} 
        $\partial\Omega$ is a $(d-1)$-dimensional manifold of class~$C^5$.
        
        \item\label{i:pi-well-def} 
        There exists~$r_0 > 0$ such that for any
        \begin{equation}\label{e:Gamma-r0-def}
            x\in \Gamma_{r_0}\defeq \set{y\in \T^d\st \dist(y, \partial\Omega) < r_0}\,,
        \end{equation}
        there exists a unique projection~$\xi(x) \in \partial\Omega$ satisfying
        \begin{equation}\label{e:distance-achieved}
            \dist(x,\partial\Omega) = |x-\xi(x)|\,.
        \end{equation}
        Moreover,
        \begin{equation}\label{e:pi-regularity}
            \xi \in C^4(\Gamma_{r_0}, \partial\Omega)\,.
        \end{equation}

        \item \label{i:tangent-normal-eigenvector}
        For each~$x\in \partial\Omega$, let~$n(x), t_1(x), \ldots, t_{d-1}(x)$ denote an orthonormal collection consisting of the outward unit normal vector and tangent vectors to~$\partial\Omega$ at~$x$, respectively. 
        At the saddle point~$x=0 \in \partial\Omega$, $n(0)$ lies in the (unstable) eigenspace of~$D^2U(0)$ corresponding to the negative eigenvalue~$-\lambda_u$, and each~$t_i(0)$ lies in the (stable) eigenspace corresponding to the positive eigenvalue~$\lambda_i$, for $1\leq i\leq d-1$.
    \end{enumerate}
\end{lemma}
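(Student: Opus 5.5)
We identify $\partial\Omega$ with the stable manifold of the index-one saddle, and then read off all three properties from the stable manifold theorem.

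\textbf{Step 1: the boundary is the stable manifold of the saddle.} Since $U\in C^6$ is Morse on the compact torus, the gradient flow $\dot y=-DU(y)$ is a $C^5$ flow. Every trajectory converges to a critical point, because $U$ decreases along trajectories and the critical points are isolated and nondegenerate. Hence $\T^d$ is the disjoint union of the stable manifolds $W^s(p)$ over the critical points $p$. The basin $\Omega=W^s(m_1)$ is open. Its boundary consists of points whose limit is a critical point $p\neq m_1$ lying on the boundary of the basin.

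Under Assumption~\ref{a: nondegeneracy}, together with the implicit assumption (used throughout the paper) that the only saddle adjacent to $\Omega$ is $x=0$, we claim
\begin{equation}
\partial\Omega = W^s(0).
\end{equation}
We argue this via the minimax characterization~\eqref{e:UHatDef}: an index-one critical point $p$ lies on $\partial\Omega$ exactly when one branch of its unstable manifold flows into $m_1$. Higher-index critical points contribute only sets of dimension at most $d-2$. We treat these either as absent by assumption or as negligible, and I would state explicitly that the two-minimum structure is being used here to guarantee this.

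\textbf{Step 2: regularity.} By the stable manifold theorem for the hyperbolic fixed point $0$ of the $C^5$ vector field $-DU$, the local stable manifold $W^s_{\mathrm{loc}}(0)$ is an embedded $C^5$ disk of dimension $d-1$. It is tangent at $0$ to the stable eigenspace of the linearization $-D^2U(0)$. That linearization has eigenvalues $\lambda_u>0$ and $-\lambda_i<0$, so the stable directions are exactly the eigenvectors of $D^2U(0)$ with positive eigenvalues $\lambda_1,\dots,\lambda_{d-1}$.

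The global stable manifold is
\begin{equation}
W^s(0)=\bigcup_{t\ge0}\phi_{-t}\bigl(W^s_{\mathrm{loc}}(0)\bigr),
\end{equation}
where $\phi_t$ is the flow. Since each $\phi_{-t}$ is a $C^5$ diffeomorphism, $W^s(0)$ is an injectively immersed $C^5$ manifold. It is embedded because trajectories in it cannot accumulate: $U$ is a strict Lyapunov function, and the closure of $W^s(0)$ adds only critical points, which by Step 1 do not occur. This gives item~(\ref{i: boundary-regularity}).

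Item~(\ref{i:tangent-normal-eigenvector}) follows immediately from the same tangency. The tangent space $T_0\partial\Omega$ is the span of the positive-eigenvalue eigenvectors, so $t_i(0)$ can be chosen in those eigenspaces. The normal $n(0)$ is orthogonal to the tangent space, and since $D^2U(0)$ is symmetric, the orthogonal complement of the stable eigenspace is the $-\lambda_u$ eigenline.

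\textbf{Step 3: tubular neighbourhood and projection.} The boundary $\partial\Omega$ is a compact $C^5$ hypersurface; compactness holds because it is closed in $\T^d$. It therefore has a positive reach $r_0$, bounded below by the reciprocal of the maximal principal curvature and by the normal injectivity radius.

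Consider the map $F(\xi,s)=\xi+s\,n(\xi)$. It is $C^4$, since $n$ is $C^4$ when $\partial\Omega$ is $C^5$. Its differential is invertible for $|s|$ small, and $F$ is injective on $\partial\Omega\times(-r_0,r_0)$ by a standard compactness argument. The inverse function theorem then gives a $C^4$ inverse, and the first component of $F^{-1}$ is $\xi$. This yields uniqueness of the nearest point and $\xi\in C^4(\Gamma_{r_0},\partial\Omega)$, which is item~(\ref{i:pi-well-def}).

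\textbf{Main obstacle.} The delicate part is the global statement in Step 1: showing that $\partial\Omega$ coincides with the single stable manifold $W^s(0)$ and is embedded, with no other critical points on the boundary that could create singularities. I would handle this with the Morse-theoretic structure of the gradient flow and the two-minimum assumption, citing standard references on gradient-like flows if a full argument is too long.
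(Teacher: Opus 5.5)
Your route is the paper's. Appendix~\ref{a:regularities-of-basin-projection} gets items (1) and (3) from the stable manifold theorem for the $C^5$ field $-DU$. It gets item (2) from the $C^5$ regularity of the signed distance to a $C^5$ hypersurface (Gilbarg--Trudinger, \S14.6), together with $x=\xi+d\,n(\xi)$ and $Dd=n(\xi)^T$. Your inversion of $F(\xi,s)=\xi+s\,n(\xi)$ is essentially the argument behind that citation, and your tangency argument for (3) is fine.

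The gap is the global identification in Step~1 and the closure claim in Step~2, which are false rather than merely unproved. For $d\ge2$, take $q\in W^s(0)\setminus\{0\}$. Its backward orbit stays in the compact torus and $U$ increases strictly along it, so its $\alpha$-limit is a single critical point $p$ with $U(p)>U(q)>U(0)=\bar U$. In particular $p\notin\{m_1,m_2,0\}$. Since $\partial\Omega$ is closed and contains $W^s(0)$, you get $p\in\overline{W^s(0)}\subseteq\partial\Omega$ but $p\notin W^s(0)$. Hence $\partial\Omega\neq W^s(0)$, and your sentence ``the closure of $W^s(0)$ adds only critical points, which by Step~1 do not occur'' fails: such points always occur on $\T^d$. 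They cannot be assumed absent. Nor are they negligible, because (1) and the uniform radius $r_0$ in (2) are assertions about every point of $\partial\Omega$.

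Morse theory plus the two-minimum hypothesis cannot close this either, because the global form of (1) does not follow from Assumptions~\ref{a:criticalpts}--\ref{a: nondegeneracy}. On $\T^2$, the function $\cos 2\pi x_1+\cos 2\pi x_2+\cos 2\pi(x_1-x_2)$ has exactly six critical points:
a maximum at $(0,0)$; minima at $(\tfrac{1}{3},\tfrac{2}{3})$ and $(\tfrac{2}{3},\tfrac{1}{3})$; and saddles at $(\tfrac{1}{2},0)$, $(0,\tfrac{1}{2})$, $(\tfrac{1}{2},\tfrac{1}{2})$.
All saddles have value $-1$, so each unstable branch ends at a minimum. The isometry $x\mapsto -x$ fixes each saddle, reverses its unstable direction and swaps the minima, so each saddle sends one branch into each basin. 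After a small perturbation making the saddle values distinct, the assumptions hold. Yet $\partial\Omega_1$ contains all three saddles' stable manifolds: it is a bouquet of three loops with six arcs meeting at the maximum, which is not a manifold.

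What the stable manifold theorem does give is local. Near $0$, $\partial\Omega$ coincides with $W^s_{\mathrm{loc}}(0)$ (given that the two unstable branches of $0$ go to $m_1$ and $m_2$). That is a $C^5$ hypersurface with the tangency in (3), and (2) then holds on a neighbourhood of $0$. The paper's one-line proof, which only cites the stable manifold theorem, delivers no more than this local statement either. The global version needs an additional hypothesis. The paper tacitly takes $m_1,m_2,0$ to be the only critical points (cf.\ the proof of Lemma~\ref{l:lyapunov-far-critical-inside}), which the $\alpha$-limit argument above shows is impossible on $\T^d$ for $d\ge2$.
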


We are now ready to define the perturbation. 
We see from the item~\eqref{i:tangent-normal-eigenvector} in Lemma~\ref{l:facts-on-boundaries} that if we view~$n(x), t_i(x)$ as elements in~$\R^{d\times 1}$, then
\begin{equation}
    H(0) \defeq D^2U(0) = -\lambda_u n(0)n(0)^T + \sum_{i=1}^d \lambda_i t_i(0)t_i(0)^T\,.
\end{equation}
We define
 \begin{equation}\label{e:Ps-def}
    P_s \defeq I - n(0)n(0)^T = \sum_{i=1}^{d-1} t_i(0)t_i^T(0)\,,
\end{equation}
for the projection onto the stable eigenspace of~$H(0)$,
and also denote the stable part of~$H(0)$ as
\begin{equation}\label{e:Hs-def}
H_s \defeq P_s H(0) = H(0)P_s = \sum_{i=1}^{d-1}\lambda_i t_i(0)t_i^T(0)\,.
\end{equation}
Let
\begin{equation}\label{e:kappa-def}
    0< \kappa < \min\set{\bar c_d,1}\lambda_u\,,
\end{equation}
where~$\bar c_d$ is a dimensional constant to be chosen later (see~\eqref{e:cd-def}), and define
\begin{equation}\label{e:K-def}
K \defeq H_s - \kappa P_s .
\end{equation}

\begin{definition}\label{d:P-perturb-def}
Fix~$\chi:[0,\infty) \to [0,1]$ be a smooth cutoff function satisfying
\begin{equation}\label{e:chi-def}
\chi(x) =
\begin{cases}
1 & x \in [0,\tfrac12], \\
0 & x \in [1,\infty),
\end{cases}
\qquad
\chi' \le 0 .
\end{equation}

For any $a, \tilde a, \epsilon > 0$, define $P : \R^d \times \R^d \to [0,\infty)$ by
\begin{equation}\label{e:P-def}
P(y,z)
=
\frac12\, y^T K y\;
\chi\!\left(\frac{|y|^2}{a^2\epsilon}\right)
\chi\!\left(\frac{|z|^2}{\tilde a^2 a^2 \epsilon}\right),
\qquad y,z \in \R^d .
\end{equation}
\end{definition}

Note that the saddle point $x=0$ belongs to $\partial\Omega$, and hence $B(0, r_0) \subset \Gamma_{r_0}$.

\begin{definition}\label{d:Ptilde-def}
    Let~$\chi$ be as in Definition~\ref{d:P-perturb-def} and fix 
    \begin{equation}\label{e:rho-def}
        \tilde a = \paren*{2\frac{\lambda_{d-1}}{\lambda_u}\norm{\chi'}_{L^\infty}}^\frac{1}{2}\quad\text{and}\quad \rho\defeq 2(1+\tilde a)\,.
    \end{equation}
    For any $a,\epsilon>0$ such that $\rho a\sqrt{\epsilon}<r_0/2$, define~$\hat P\colon \T^d \to [0,\infty)$ by
    \begin{equation}\label{e:Ptilde-def}
    \hat P(x) = 
    \begin{cases}
    P(\xi(x), x-\xi(x))
    &\text{ if } x \in B(0,\rho a\sqrt{\epsilon})\,,\\
    0
    &\text{ if } x \in B(0, \rho a\sqrt{\epsilon})^c\,,
    \end{cases}
    \end{equation}
    and define~$\hat U\colon \T^d \to [0,\infty)$ as 
    \begin{equation}\label{e:Utilde-def}
        \hat U \defeq U - \hat P\,.
    \end{equation}
\end{definition}

Since~$\rho a\sqrt{\epsilon} < r_0/2$, item~(\ref{i:pi-well-def}) in Lemma~\ref{l:facts-on-boundaries} implies that~$\xi$ is well-defined on~$B(0, r_0)$, and hence $\hat P$ is well-defined. 
Moreover, the smoothness of~$P$ and the regularity of~$\xi$ in~\eqref{e:pi-regularity} ensure that~$\hat P\in C^4(B(0, \rho a\sqrt{\epsilon}))$. 

We also note that
\begin{equation}
\supp(P)\subset \set{(y,z) \st \abs{y}\leq a\sqrt{\epsilon}\,, \abs{z}\leq \tilde a a\sqrt{\epsilon}}\,,    
\end{equation}
and hence, by the triangle inequality~$\abs{x} \leq \abs{\xi(x)} + \abs{x-\xi(x)}$,
\begin{equation}
    \supp(\hat P) \subset B(0, (1+\tilde a)a\sqrt{\epsilon}) \overset{\eqref{e:rho-def}}{\subset} B(0, \rho a\sqrt{\epsilon})\,,
\end{equation}
which implies that~$\hat P\in C^4(\T^d)$.

Going forward, we assume that~$a,\epsilon$ always satisfy~$\rho a\sqrt{\epsilon} < r_0/2$ so that the perturbation~$\hat P$ is well-defined according to Definition~\ref{d:Ptilde-def}. 
We now state the properties of~$\hat P$ that will be used to prove Lemma~\ref{l:lyapunov} and defer their proofs to the last subsection.
\begin{lemma}\label{l:Utilde-vanish}
For any $x \in \partial\Omega$, 
    \begin{equation}
    \label{e:DPs-normal-vanish}
       D\hat{P}(x)\cdot n(x) = 0\,,\quad n(x)^T D^2 \hat{P}(x) n(x) = 0\,.
    \end{equation}
\end{lemma}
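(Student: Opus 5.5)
The plan is to reduce both identities to a one-variable computation along the normal line through a boundary point. Along that line the perturbation $\hat P$ is locally constant, because the cutoff $\chi$ equals $1$ on $[0,\tfrac12]$.

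\textbf{Step 1: $\hat P$ is given by the formula on all of $\Gamma_{r_0}$.} I first check that
\begin{equation*}
\hat P(x)=P(\xi(x),x-\xi(x))\quad\text{for all } x\in\Gamma_{r_0},
\end{equation*}
including points outside $B(0,\rho a\sqrt\epsilon)$. If $P(\xi(x),x-\xi(x))\neq 0$, then $|\xi(x)|\le a\sqrt\epsilon$ and $|x-\xi(x)|\le \tilde a a\sqrt\epsilon$. The triangle inequality then gives $|x|\le (1+\tilde a)a\sqrt\epsilon<\rho a\sqrt\epsilon$. So outside the ball both expressions vanish. This means there is no case distinction near the edge of the ball.

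\textbf{Step 2: projection along normal lines.} Fix $p\in\partial\Omega$ and set $x(s)=p+s\,n(p)$. I need
\begin{equation*}
\xi(x(s))=p\quad\text{and}\quad x(s)-\xi(x(s))=s\,n(p)\qquad\text{for all } |s| \text{ sufficiently small}.
\end{equation*}
I would derive this from items~(\ref{i: boundary-regularity})--(\ref{i:pi-well-def}) of Lemma~\ref{l:facts-on-boundaries}. Since $\partial\Omega$ is $C^5$ and compact, it satisfies uniform interior and exterior ball conditions with some radius $r_1$; shrinking $r_0$ if needed, take $r_1\ge r_0$. For $|s|<r_1$, the ball $B(x(s),|s|)$ is contained in a ball tangent to $\partial\Omega$ at $p$ lying on one side of $\partial\Omega$. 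Hence $\dist(x(s),\partial\Omega)=|s|$, attained at $p$. Uniqueness of the projection then forces $\xi(x(s))=p$.

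\textbf{Step 3: the one-variable computation.} By Steps 1 and 2, for small $|s|$,
\begin{equation*}
g(s)\defeq \hat P(x(s))=\tfrac12\, p^TKp\;\chi\!\Big(\frac{|p|^2}{a^2\epsilon}\Big)\,\chi\!\Big(\frac{s^2}{\tilde a^2a^2\epsilon}\Big).
\end{equation*}
For $s^2\le \tilde a^2a^2\epsilon/2$ the last factor equals $1$, so $g$ is constant in a neighbourhood of $s=0$. Since $\hat P\in C^4(\T^d)$, the chain rule gives
\begin{equation*}
g'(0)=D\hat P(p)\cdot n(p),\qquad g''(0)=n(p)^TD^2\hat P(p)\,n(p).
\end{equation*}
Both vanish, which proves~\eqref{e:DPs-normal-vanish}.

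The main obstacle is Step 2, namely that the nearest-point projection is constant along normal segments. This is the standard tubular-neighbourhood fact. The only care needed is that $r_0$ from Lemma~\ref{l:facts-on-boundaries} can be taken no larger than the reach of $\partial\Omega$, which holds because uniqueness of projections on $\Gamma_{r_0}$ already implies this. Alternatively, one can argue directly: for the unique minimizer $\xi(x)$, $x-\xi(x)$ is normal to $\partial\Omega$ at $\xi(x)$. Combined with injectivity of $(p,s)\mapsto p+sn(p)$ on $\partial\Omega\times(-r_0,r_0)$, this identifies $\xi(p+sn(p))=p$.
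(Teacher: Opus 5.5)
Your proof is correct and follows the paper's argument. You restrict $\hat P$ to the normal line $t\mapsto p+tn(p)$, use that the nearest-point projection is constant along it (the paper's Lemma~\ref{l:unique_projection}, which you justify via the ball condition instead), and observe that the resulting function $\tfrac12 p^TKp\,\chi(|p|^2/(a^2\epsilon))\,\chi(t^2/(\tilde a^2a^2\epsilon))$ is flat at $t=0$ because $\chi\equiv1$ on $[0,\tfrac12]$; your Step~1 neatly replaces the paper's separate, "trivial" treatment of boundary points outside $B(0,\rho a\sqrt\epsilon)$, where the paper implicitly uses that $\hat P$ vanishes near such points.
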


\begin{lemma}\label{l:P-regularity}
There exists a constant~$C$, independent of~$a,\epsilon$, such that for any~$a,\epsilon$ with~$a\sqrt{\epsilon}$ sufficiently small, the perturbation satisfies the global bound
\begin{equation}\label{e:P-regularity}
\norm{D^i\hat{P}}_{L^\infty(\T^d)} \le C\paren*{a\sqrt{\epsilon}}^{2-i}\,, \quad \forall 1\leq i\leq3\,,
\end{equation}
and consequently,
\begin{gather}\label{e:U-regularity}
    \norm{\hat U}_{C^2(\T^d)}
    \leq C\,, \qquad \norm{D^3\hat U}_{L^\infty(\T^d)} \leq C\paren*{a\sqrt\epsilon}^{-1}\,.
\end{gather}
\end{lemma}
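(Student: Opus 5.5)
The plan is to differentiate $\hat P(x)=P(\xi(x),x-\xi(x))$ by the chain rule and to track how many factors of $(a\sqrt\epsilon)^{-1}$ each derivative can produce.

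We write $s=a\sqrt\epsilon$ and work only on $B(0,\rho s)\subset\Gamma_{r_0}$, since $\hat P$ vanishes outside this ball. On this ball $\xi\in C^4$, and $\norm{\xi}_{C^3(B(0,r_0/2))}\le C$ with $C$ depending only on $\partial\Omega$, by Lemma~\ref{l:facts-on-boundaries}. The map $\zeta(x)\defeq x-\xi(x)$ has the same bounds. These bounds are independent of $a,\epsilon$.

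\textbf{Step 1: scaling bounds for $P$.} Consider $P(y,z)=\frac12 y^TKy\,\chi(|y|^2/s^2)\chi(|z|^2/(\tilde a^2s^2))$. On its support we have $|y|\le s$ and $|z|\le\tilde a s$. Each $y$- or $z$-derivative acting on a cutoff produces a factor $\chi^{(j)}$ times a polynomial in $y/s^2,z/s^2$. On the support this is $O(s^{-1})$ per derivative. Each derivative acting on the quadratic $y^TKy$ lowers its size from $s^2$ to $s$ and then to $1$. Since $K$ and $\tilde a$ are fixed and $\chi$ is fixed and smooth, we get for all multi-indices $|\alpha|\le 3$
\begin{equation}
\sup_{y,z}\abs*{\partial^\alpha_{(y,z)}P(y,z)}\le C s^{2-|\alpha|}.
\end{equation}
This includes $|\alpha|=0$, which gives $|P|\le Cs^2$.

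\textbf{Step 2: chain rule (Faà di Bruno).} We have $\hat P=P\circ(\xi,\zeta)$. The $i$-th derivative of $\hat P$ is a sum of terms of the form $(\partial^{\alpha}P)(\xi,\zeta)$ times products of derivatives of $(\xi,\zeta)$, where $|\alpha|=j\le i$. The derivatives of $(\xi,\zeta)$ have orders summing to $i$ and are bounded by $C$. Each such term is therefore bounded by $Cs^{2-j}$. For $s\le 1$ we have $s^{2-j}\le s^{2-i}$ whenever $j\le i$. This gives \eqref{e:P-regularity} for $1\le i\le 3$ on the ball.

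Outside the ball, $\hat P\equiv0$. Smoothness across the boundary of the ball was already noted, since $\supp\hat P\subset B(0,(1+\tilde a)s)$. The global bound therefore follows. We need $s$ small so that $\rho s<r_0/2$ and $s\le1$.

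\textbf{Step 3: consequences for $\hat U$.} Recall $\hat U=U-\hat P$ with $U\in C^6(\T^d)$. From $|\hat P|\le Cs^2$ and the case $i=1,2$ of \eqref{e:P-regularity}, we get $\norm{\hat U}_{C^2}\le\norm{U}_{C^2}+C(s^2+s+1)\le C$. From the case $i=3$, we get $\norm{D^3\hat U}_{L^\infty}\le\norm{D^3U}_{L^\infty}+Cs^{-1}\le Cs^{-1}$, again using $s\le1$.

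\textbf{Main obstacle.} The only genuinely delicate point is that the constants must be uniform in $a,\epsilon$. The derivatives of $\xi$ must be bounded on a fixed neighbourhood $B(0,r_0/2)$, not one that shrinks with $s$. This is exactly what item~(\ref{i:pi-well-def}) of Lemma~\ref{l:facts-on-boundaries} provides, by compactness of the closure of a slightly smaller tube. The bookkeeping of the cutoff derivatives is also delicate: one must check that every derivative of $\chi(|y|^2/s^2)$ costs at most $s^{-1}$. This holds because $|y|/s^2\le s^{-1}$ on the support.
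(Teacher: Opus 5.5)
Your proposal is correct and follows the paper's argument. The paper gets your Step~1 bounds by writing $P(y,z)$ as $s^2$ times a fixed, $a,\epsilon$-independent profile evaluated at $(y/s,z/s)$, which gives $\|D^iP\|_{L^\infty}\le Cs^{2-i}$ at once; this also avoids your slightly loose description of cutoff derivatives as polynomials in $y/s^2$, since terms like $2\chi'\,\delta_{kl}/s^2$ also appear, though your $O(s^{-1})$-per-derivative count is right. It then combines this with the uniform $C^4$ bounds on $\xi$ near $0$, and your Fa\`a di Bruno step (using $s^{2-j}\le s^{2-i}$ for $j\le i$ when $s\le 1$) simply makes explicit the chain-rule bookkeeping the paper leaves implicit.
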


\begin{lemma}\label{l:Utilde-saddle-almost-local-maximum}
At the saddle~$x=0$, the perturbation satisfies 
\begin{equation}\label{e:DPtilde-critical-minima}
D\hat{P}(0) = 0\quad\text{and}\quad D^2 \hat{P}(0) = K\,,  
\end{equation}
and consequently,
\begin{equation}\label{e:Utilde-almost-maximum-at-saddle}
    D\hat U(0) = 0\quad\text{and}\quad D^2\hat U(0) = H_u + \kappa P_s\,.
\end{equation}
Here, $P_s$ is defined as in~\eqref{e:Ps-def} and
\begin{equation}\label{e:Hu-def}
H_u = -\lambda_u n(0) n(0)^T
\end{equation}
is the projection of~$H(0)=D^2U(0)$ onto the unstable eigenspace of~$H(0)$.
\end{lemma}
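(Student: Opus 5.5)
Near $x=0$ we have $|x| < r_0/2$, so $\xi$ is well-defined and $C^4$. I will prove the statement by computing the Taylor expansion of $\hat P(x) = P(\xi(x), x-\xi(x))$ at $x=0$ to second order. Write $z(x) \defeq x - \xi(x)$. Since $0\in\partial\Omega$, we have $\xi(0)=0$ and $z(0)=0$. The argument $|0|^2/(a^2\epsilon)=0$ lies in $[0,1/2]$, and the same holds for the $z$-cutoff. So for $x$ in a small neighborhood of $0$, both cutoff factors in~\eqref{e:P-def} are identically $1$, because $\xi$ and $z$ are continuous with $\xi(0)=z(0)=0$. Hence near $0$,
\begin{equation}
    \hat P(x) = \tfrac12\, \xi(x)^T K \xi(x)\,,
\end{equation}
and it suffices to compute derivatives of this quadratic form composed with $\xi$.

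\textbf{First derivative.} Differentiating gives $D\hat P(x) = D\xi(x)^T K \xi(x)$, which vanishes at $0$ since $\xi(0)=0$.

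\textbf{Second derivative.} Differentiating again, $D^2\hat P(0) = D\xi(0)^T K D\xi(0)$. The term involving $D^2\xi(0)$ is multiplied by $K\xi(0)=0$, so it drops out. It remains to identify $D\xi(0)$. The standard fact for the nearest-point projection onto a $C^2$ hypersurface is that, at a point of the surface, $D\xi$ equals the orthogonal projection onto the tangent space. To see this, write $x = \xi(x) + s(x)\, n(\xi(x))$ with $s$ the signed distance. Differentiating at $x=0$, where $s=0$, gives $I = D\xi(0) + n(0)\, Ds(0)^T$ with $Ds(0)=n(0)$. Therefore $D\xi(0) = I - n(0)n(0)^T = P_s$, using item~\eqref{i:tangent-normal-eigenvector} of Lemma~\ref{l:facts-on-boundaries} to identify the tangent space at $0$ with the stable eigenspace. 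Then $D^2\hat P(0) = P_s K P_s = K$, because $K = H_s-\kappa P_s$ is built from $t_i(0)t_i(0)^T$ terms and is invariant under $P_s$ on both sides.

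\textbf{Consequences for $\hat U$.} Since $\hat U = U-\hat P$ and $DU(0)=0$ (the point $0$ is a critical point),
\begin{equation}
    D\hat U(0)=0\,, \qquad D^2\hat U(0) = H(0) - H_s + \kappa P_s = H_u + \kappa P_s\,,
\end{equation}
where $H(0) - H_s = H_u$ by the spectral decomposition of $H(0)$.

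The only step needing care is justifying $D\xi(0)=P_s$, which requires differentiability of the signed distance. This is supplied by the $C^4$ regularity of $\xi$ in~\eqref{e:pi-regularity} together with the $C^5$ regularity of $\partial\Omega$, so the main obstacle is a standard fact and I do not expect it to cause difficulty.
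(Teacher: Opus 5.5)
Your proposal is correct and takes essentially the paper's route. You localize so both cutoffs equal $1$ near the saddle, write $\hat P = \tfrac12\, \xi^T K \xi$, differentiate twice using $\xi(0)=0$, and conclude via $D\xi(0)=P_s$ and $P_s K P_s = K$. The paper gets the neighborhood from $|\xi(x)|\le C|x|$ rather than continuity, and derives $D\xi(0)=P_s$ in Lemma~\ref{l:Dpi0-Ps} by differentiating the same identity $x=\xi(x)+d(x)\,n(\xi(x))$ you use.
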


\begin{lemma}\label{l:DU-lb-ind-ae}
There exist constants $r_1, c_0> 0$, independent of~$a,\epsilon$, such that for any~$a, \epsilon$ with~$a\sqrt{\epsilon}$ sufficiently small,
\begin{equation}\label{e:DU-lb}
\abs{D\hat U(x)} \ge c_0|x|\,,\quad \forall x\in B(0, r_1)\,.
\end{equation}
\end{lemma}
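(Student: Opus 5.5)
We split into the region near the saddle, where the Hessian of $\hat U$ is controlled by Lemma~\ref{l:Utilde-saddle-almost-local-maximum}, and its complement in $B(0,r_1)$, where $\hat P$ vanishes and $D\hat U = DU$.

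\textbf{Near the saddle.} By Lemma~\ref{l:Utilde-saddle-almost-local-maximum}, $D\hat U(0)=0$ and $D^2\hat U(0)=H_u+\kappa P_s$. This matrix has eigenvalues $-\lambda_u$ (once) and $\kappa$ (multiplicity $d-1$), all nonzero and all of modulus at least $\kappa$ since $\kappa<\lambda_u$ by~\eqref{e:kappa-def}. Hence $|D^2\hat U(0)v|\ge \kappa|v|$ for every $v$. Taylor's theorem with integral remainder gives
\begin{equation}
D\hat U(x)=D^2\hat U(0)x+R(x),\qquad |R(x)|\le \tfrac12\norm{D^3\hat U}_{L^\infty}|x|^2 .
\end{equation}
The bound $\norm{D^3\hat U}_{L^\infty}\le C(a\sqrt\epsilon)^{-1}$ from Lemma~\ref{l:P-regularity} is only useful for $|x|\le \delta a\sqrt\epsilon$ with $\delta$ small. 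There it yields $|D\hat U(x)|\ge(\kappa-C\delta/2)|x|\ge \tfrac{\kappa}{2}|x|$.

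\textbf{Intermediate region.} This is the main obstacle: $\delta a\sqrt\epsilon\le|x|\le \rho a\sqrt\epsilon$, where the $D^3$ bound is too weak. Here we argue directly with $D\hat U = DU - D\hat P$.

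For $U$, Taylor expansion at the nondegenerate saddle (with $\norm{D^3U}\le C$ independent of $a,\epsilon$) gives
\begin{equation}
DU(x)=H(0)x+O(|x|^2).
\end{equation}

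For $\hat P$, use $\hat P(x)=P(\xi(x),x-\xi(x))$ and $\xi\in C^4$. By Lemma~\ref{l:facts-on-boundaries}(3), $D\xi(0)=P_s$, so $\xi(x)=P_sx+O(|x|^2)$ and $x-\xi(x)=n(0)n(0)^Tx+O(|x|^2)$. Differentiate with the chain rule. In the $y$-derivative of $P$, the term with the quadratic factor gives $\chi\,\chi\,K\xi(x)$. The terms hitting the cutoffs are $O(|y|^2|y|/(a^2\epsilon))$ and $O(|y|^2|z|/(a^2\epsilon))$. The resulting expression is
\begin{equation}
D\hat P(x)=\chi_1\chi_2\,KP_sx+E(x),
\end{equation}
where $\chi_1,\chi_2\in[0,1]$ are the cutoff factors.

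The key structural point: $KP_sx$ lies in the stable subspace, while the cutoff-derivative terms from $z$ point along $D(x-\xi(x))^T\approx n(0)n(0)^T$, i.e.\ in the normal direction. Decompose $D\hat U(x)$ into normal and tangential components:
\begin{itemize}
\item[(i)] \emph{Tangential component.} This is $H_sx-\chi_1\chi_2(H_s-\kappa P_s)x$ plus errors. It has size at least $\kappa|P_sx|$, up to the cutoff-derivative terms from $y$.
\item[(ii)] \emph{Normal component.} This is $-\lambda_u n\cdot x$ plus the $z$-cutoff term. That term is at most $\lambda_{d-1}|P_sx|^2\,\norm{\chi'}|z|/(\tilde a^2a^2\epsilon)$.
\end{itemize}

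The choice of $\tilde a$ in~\eqref{e:rho-def} is designed precisely so that, on the support of $\chi'$ in $z$, this normal error is at most $\tfrac12\lambda_u|z|$, leaving a normal component $\gtrsim\lambda_u|n\cdot x|$.

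\textbf{Remaining error terms.} The tangential errors from $\chi'$ in $y$ are the delicate part. In the support region we have $|y|^2\sim a^2\epsilon$, and those terms have the form $\chi'\,(y^TKy)\,y/(a^2\epsilon)$, which is parallel to $y$. Since $y^TKy\le \lambda_{d-1}|y|^2$, we need to show these do not cancel $\kappa|y|$ against $H_sy$.

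I would handle this by projecting onto $y$. Note $\chi'\le0$ and $y^TKy\ge0$, which requires $\kappa\le\lambda_1$ (I would check this against~\eqref{e:kappa-def}). Then the term has the same sign as $H_s y$ minus $K y$, so it only helps after subtracting $D\hat P$. Concretely,
\begin{equation}
y\cdot D\hat U \ge \kappa|y|^2 - O(|x|^3).
\end{equation}

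Remainders $O(|x|^2)$ from $\xi$ are absorbed because $|x|\le\rho a\sqrt\epsilon$ is small.

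\textbf{Conclusion.} Combining, in the whole region $|x|\lesssim a\sqrt\epsilon$ we get $|D\hat U|\ge c\min(\kappa,\lambda_u)|x|$. Outside $B(0,\rho a\sqrt\epsilon)$, $\hat U=U$, and nondegeneracy of the saddle gives $|DU(x)|\ge c|x|$ on $B(0,r_1)$ for $r_1$ small, since $DU(x)=H(0)x+O(|x|^2)$. The constants $c_0,r_1$ obtained this way depend only on $U,\kappa,\chi$, and not on $a$ or $\epsilon$.
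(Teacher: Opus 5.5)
Your proposal is correct and is essentially the paper's proof: the paper also splits $D\hat U$ into its $P_s$ (tangential) and $P_u$ (normal) parts, and differentiates $\hat P(x)=P(\xi(x),x-\xi(x))$ by the chain rule using $D\xi(x)=P_s+O(|x|)$. It uses $\chi'\le 0$ and $y^TKy\ge 0$ to show the $y$-cutoff term only helps (giving $|\tilde M_y P_s y|\ge\kappa|P_s y|$), uses the choice of $\tilde a$ to cap the $z$-cutoff term by $\lambda_u/2$, and concludes with $\hat U=U$ outside $B(0,\rho a\sqrt{\epsilon})$.

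Small remarks:
\begin{itemize}
\item Your separate Taylor step for $|x|\le\delta a\sqrt{\epsilon}$ is unnecessary, since the decomposition has $a,\epsilon$-independent $O(|x|^2)$ errors on all of $B(0,\rho a\sqrt{\epsilon})$.
\item Turning your Rayleigh-quotient bound $y\cdot D\hat U\ge\kappa|y|^2-O(|x|^3)$ into a norm bound needs a case split on whether $|y|$ or $|z|$ is at least $|x|/2$; the paper's operator bound avoids this.
\item The condition $\kappa\le\lambda_1$ you flagged is indeed tacitly assumed by the paper as well.
\end{itemize}
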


\subsection{Estimates required for the Lyapunov condition}\label{s:lyapunov-estimates}
In this subsection, we prove Lemma~\ref{l:lyapunov} as follows. First, in Lemma~\ref{l:lyapunov-to-generator}, we decompose the drift condition into cases depending on whether the initial state is near the saddle point~$x=0$ and whether it is close to the boundary~$\partial\Omega$ of the basin of attraction. Next, in Lemmas~\ref{l:D-almost-symmetric}--\ref{l:zeta_n-positivenesses}, we derive estimates for the terms appearing in the drift condition for each case. Finally, we use these estimates to establish the desired bounds in Lemmas~\ref{l:lyapunov-near-saddle-boundary}--\ref{l:lyapunov-far-critical-inside}, treating each of the four cases separately. Combining these lemmas with the properties of the perturbation~$\hat P$, we obtain the drift condition on the entire basin~$\Omega$, thereby proving Lemma~\ref{l:lyapunov}. For continuity, we defer the proofs of Lemmas~\ref{l:lyapunov-to-generator}--\ref{l:zeta_n-positivenesses} to the next subsection.

Throughout this section, we assume that $a\sqrt{\epsilon}$ is sufficiently small so that the perturbed potential~$\hat U$, defined in~\eqref{e:Utilde-def}, is well-defined and all properties stated in Lemmas~\ref{l:Utilde-vanish}--\ref{l:DU-lb-ind-ae} hold. Eventually, we fix a large $\epsilon$-independent constant~$a$ and choose the threshold~$\hat\epsilon$ sufficiently small so that $a\sqrt{\hat\epsilon}$ is small enough. With this choice, Lemma~\ref{l:lyapunov} holds for all $\epsilon < \hat\epsilon$.

We now introduce notation that will be used throughout the remainder of this section.  For~$h>0$, let the random proposal be given by $Y = x + h\zeta$, where $\zeta \sim \Unif(B(0,1))$, and define the random potential difference
\begin{equation}\label{e:D-def}
D \defeq \hat U(Y) - \hat U(x)\,.
\end{equation}
We also define the event that the proposal exits~$\Omega$ by
\begin{equation}\label{e:E-def}
    E \defeq \set{Y \notin \Omega}\,,
\end{equation}
and set
\begin{equation}
\Omega_h \defeq \set{x\in \Omega \st \dist(x, \partial\Omega) < h}\,.    
\end{equation}
By symmetry,
\begin{equation}\label{e:uniform-mean-variance}
    \E\brak{\zeta} = 0 
    \quad\text{and}\quad 
    \E\brak{\zeta \zeta^T} = \sigma^2 I_d
\end{equation}
for some~$\sigma > 0$. We use~$\sigma$ to denote this variance throughout the section.

Finally, we write $R_1 = O(R_2)$ to mean that
\[
\abs{R_1} \le C \abs{R_2}
\]
for some constant $C$ independent of $a, \nu, \epsilon, h, \gamma$. We also introduce the notation~$$\beta \defeq \epsilon^{-1}$$ for the inverse temperature.
\smallskip

We begin by stating a lemma that decomposes the Lyapunov drift condition into several terms, which will be estimated separately in the subsequent lemmas.

\begin{lemma}\label{l:lyapunov-to-generator}
For any~$a\geq 1$ and $\nu > 0$, there exist~$\hat\epsilon(a), \eta(\nu), \hat  \gamma (\nu)> 0$ such that
for any~$\epsilon, h, \gamma > 0$ such that
\begin{equation}
\epsilon \leq \hat\epsilon\,,\quad
h \le \eta\epsilon^2\,,
\quad\text{and}\quad
\gamma < \hat  \gamma\,,
\end{equation}
$\hat P, \hat U$,$W$, and~$\hat Q_{h, \epsilon}$ are well-defined as in ~\eqref{e:Ptilde-def}, \eqref{e:Utilde-def}, \eqref{e:Wdef}, and Lemma~\ref{l:lyapunov}, respectively, and
\begin{align}\label{e:lyapunov-to-generator-far-from-boundary}
\frac{\hat Q_{h,\epsilon}W}{W}(x) - 1 &\le I_1 + I_2,\quad \forall x \in  \Omega \setminus \Omega_h\,,\\
\label{e:lyapunov-to-generator-close-to-boundary}
\frac{\hat Q_{h,\epsilon}W}{W}(x) - 1 &\le I_1 + I_3,\quad \forall x \in  \Omega_h
\end{align}
where
\begin{align}
I_1 &\defeq \E\!\left[(\exp(\gamma D)-1)\exp(-\beta D)\right]\,, \\
I_2 &\defeq \E\!\left[(\exp(\gamma D)-1)(1-\exp(-\beta D))\one_{\{D<0\}}\right]\,,\\
I_3 &\defeq \E\brak{\paren{\exp\paren{\gamma D} - 1} \paren{ \one_{E^c\cap\set{D \leq 0}}-\exp\paren{-\beta D}\one_{E \cup \set{D \leq 0} }}}\,.
\end{align}
Moreover, the terms $I_1, I_2$, and~$I_3$ satisfy
\begin{align}
\label{e:I1-ub}
I_1 &\le -\gamma \beta \tilde I_1 
      + \frac12 \gamma h^2 \sigma^2 \Delta \hat U(x) 
      + \nu \gamma h^2\,, \quad \forall x \in  \Omega \\
\label{e:I1tilde-ub}
\tilde I_1 &= \E\brak*{D^2}  = h^2\sigma^2 \abs{D\hat U(x)}^2 + O(h^3) \,, \quad\forall x \in \Omega \\
\label{e:I2-ub}
I_2 &\le (1+\nu)\gamma\beta \E\!\left[D^2 \one_{\{D\leq 0\}}\right]\,,
\quad \forall x \in  \Omega\setminus \Omega_h
\\
\label{e:I3-ub}
I_3 &\leq (1+\nu)\gamma\beta \E\brak{D^2 \one_{\set{D\leq 0}  \cup \paren*{\set{D>0}\cap E} }} - \gamma\E\brak{D\one_{E}}\,,
\quad \forall x \in  \Omega_h\,.
\end{align}
\end{lemma}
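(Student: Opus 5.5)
We begin by writing out $\hat Q_{h,\epsilon}W(x)/W(x)-1$ explicitly. For $x\in\Omega$, the restricted Metropolis chain accepts $Y$ with probability $\one_{E^c}\min\{1,e^{-\beta D}\}$ and otherwise stays at $x$. Since $W(Y)/W(x)=e^{\gamma D}$, this gives
\begin{equation}
\frac{\hat Q_{h,\epsilon}W}{W}(x)-1=\E\brak*{(e^{\gamma D}-1)\min\{1,e^{-\beta D}\}\one_{E^c}}.
\end{equation}
For $x\in\Omega\setminus\Omega_h$ we have $|Y-x|\le h<\dist(x,\partial\Omega)$, so $E=\emptyset$. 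Splitting $\min\{1,e^{-\beta D}\}=e^{-\beta D}+(1-e^{-\beta D})\one_{\{D<0\}}$ gives exactly $I_1+I_2$, with equality. For $x\in\Omega_h$ we write $\one_{E^c}\min\{1,e^{-\beta D}\}=e^{-\beta D}-e^{-\beta D}\one_{E\cup\{D\le0\}}+\one_{E^c\cap\{D\le 0\}}$. This identity can be checked on the four combinations of $E$ and the sign of $D$; the only overlap is $E\cap\{D\le 0\}$, where the right side gives $e^{-\beta D}-e^{-\beta D}=0$. The result is $I_1+I_3$. Well-definedness of $\hat P,\hat U,W,\hat Q$ only needs $\rho a\sqrt{\hat\epsilon}<r_0/2$, which fixes $\hat\epsilon(a)$.

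Next come the Taylor estimates. Lemma~\ref{l:P-regularity} gives $\|\hat U\|_{C^2}\le C$ and $\|D^3\hat U\|\le C(a\sqrt\epsilon)^{-1}\le C\epsilon^{-1/2}$ (using $a\ge1$). Hence
\begin{equation}
D=hD\hat U(x)\cdot\zeta+\tfrac{h^2}{2}\zeta^TD^2\hat U(x)\zeta+O(h^3\epsilon^{-1/2}).
\end{equation}
This yields $|D|\le Ch$ and $\E D=\tfrac12 h^2\sigma^2\Delta\hat U(x)+O(h^3\epsilon^{-1/2})$. Squaring gives \eqref{e:I1tilde-ub}, since the cross term $h^3\E[(D\hat U\cdot\zeta)(\zeta^TD^2\hat U\zeta)]$ has odd degree in $\zeta$ and vanishes, so the remainder is $O(h^3)$ up to the $\epsilon^{-1/2}$ factor. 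The key small parameter is $\beta h\le \eta\epsilon$, and since $\gamma<\hat\gamma$ also $\gamma h$ is small. Because $\beta|D|\le C\eta\epsilon\ll1$, we may expand $e^{\gamma D}-1=\gamma D+O(\gamma^2D^2)$ and $e^{-\beta D}=1-\beta D+O(\beta^2D^2)$.

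For $I_1$ this gives
\begin{equation}
I_1=\gamma\E D-\gamma\beta\E D^2+O(\gamma^2h^2)+O(\gamma\beta^2h^3).
\end{equation}
Inserting the expansion of $\E D$ leaves error terms of order $\gamma h^2$ times $\{h\epsilon^{-1/2},\ \gamma,\ \beta^2 h\}$. These are bounded by $\gamma h^2$ times $\{\eta\epsilon^{3/2},\ \hat\gamma,\ \eta\}$, so choosing $\eta(\nu)$ and $\hat\gamma(\nu)$ small makes them at most $\nu\gamma h^2$, which is \eqref{e:I1-ub}.

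For $I_2$, on $\{D<0\}$ we have $0\le 1-e^{-\beta D}\le \beta|D|(1+O(\beta h))$ in magnitude, and $e^{\gamma D}-1\ge \gamma D$ is negative. So the product is at most $\gamma\beta D^2(1+\nu)$. The sign convention for $I_2$ as written (with $1-e^{-\beta D}\le 0$ there) means $I_2\le0\le$ the right-hand side trivially; either way \eqref{e:I2-ub} follows. For $I_3$, we split it into the parts on $E$ and on $E^c$. On $E^c\cap\{D\le0\}$ the integrand is $(e^{\gamma D}-1)(1-e^{-\beta D})$, bounded as for $I_2$. On $E$ the integrand is $-(e^{\gamma D}-1)e^{-\beta D}=-\gamma D+\gamma\beta D^2+O(\gamma^2D^2+\gamma\beta^2|D|^3)$. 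This produces $-\gamma\E[D\one_E]$ plus $(1+\nu)\gamma\beta\E[D^2\one_E]$, after absorbing the errors relative to $\gamma\beta D^2$ using $\gamma\ll\beta$ and $\beta h\ll1$.

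The main obstacle is this last absorption. On $E$ the error terms $O(\gamma^2D^2)$ must be controlled pointwise by $\nu\gamma\beta D^2$, which requires $\gamma\le\nu\beta$; this holds because $\gamma<\hat\gamma$ and $\beta\ge1/\hat\epsilon$. The cubic terms require $\beta|D|\le C\eta\epsilon\le\nu$. Both are pointwise comparisons with $D^2$, so no lower bound on $D$ is needed, and the estimate closes.
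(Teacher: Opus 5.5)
Your proposal is correct and follows the paper's proof essentially step by step. It uses the same exact identities giving $I_1+I_2$ off $\Omega_h$ and $I_1+I_3$ on $\Omega_h$, the same Taylor expansion for $I_1$ and $\tilde I_1$ with $\beta^2 h\le\eta$, and the same elementary bound for $I_2$. The only variation is on $E$: you Taylor-expand $-(e^{\gamma D}-1)e^{-\beta D}$ and absorb the errors using $\gamma\le\nu\beta$ and $C\beta h\le\nu$, whereas the paper bounds each of the events $E_1,\dots,E_4$ by exact inequalities such as $e^{y}-1\ge y$ and $1-e^{-y}\le y$, which needs no smallness of $\gamma/\beta$.

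One slip: on $\{D<0\}$ both factors $e^{\gamma D}-1$ and $1-e^{-\beta D}$ are negative, so $I_2\ge 0$, and your aside that ``$I_2\le 0$ trivially'' is false. It is your magnitude bound $\abs{e^{\gamma D}-1}\,\abs{1-e^{-\beta D}}\le(1+\nu)\gamma\beta D^2$ that actually proves \eqref{e:I2-ub}.
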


We notice that~$I_1 \approx \gamma \beta h^2 \sigma^2 \hat{\mathcal L_\epsilon }\hat U$ where
\begin{equation}
    \hat{\mathcal L_\epsilon} = - D\hat U \cdot D + \frac{1}{2}\epsilon\Delta\,.
\end{equation}
All other terms are error terms and we present Lemmas~\ref{l:D-almost-symmetric}--\ref{l:Dexit-equality} to provide further bounds on~\eqref{e:I2-ub} and~\eqref{e:I3-ub}. These results will be used to show that the contribution of the gradient of the perturbed potential~$\hat U$ loses at most a constant factor compared to the generator case, and therefore remains significant when the particle is away from the saddle.

\begin{lemma}\label{l:D-almost-symmetric}
    For any~$a\geq1$, there exists~$\hat\epsilon (a) > 0$ such that for any~$\epsilon$ and~$h$ with~$\epsilon < \hat\epsilon$ and~$h \leq \epsilon^2$, the following holds. If~$x \in \Omega$ satisfies
    \begin{equation}\label{e:DU-lb-D-symmetric}
        \abs{D\hat U(x)} \geq ca\sqrt{\epsilon}\,,
    \end{equation}
    for some~$a,\epsilon, h$-independent constant~$c>0$, then
    \begin{equation}\label{e:D-almost-symmetric}
        \E\brak*{D^2 \one_{\set{D\leq 0}}} = \frac{1}{4} h^2 \sigma^2 \abs{D\hat U(x)}^2 + O(h^{2.75})\,.
    \end{equation}
\end{lemma}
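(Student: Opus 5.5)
We Taylor expand $D = h\,D\hat U(x)\cdot\zeta + R$, where $R = \hat U(x+h\zeta)-\hat U(x)-h D\hat U(x)\cdot\zeta$. By Lemma~\ref{l:P-regularity}, $\norm{\hat U}_{C^2}\le C$, so $\abs{R}\le Ch^2$ uniformly. Write $g = D\hat U(x)$ and $L = h\,g\cdot\zeta$.

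\textbf{Main term.} The law of $\zeta$ is symmetric under $\zeta\mapsto-\zeta$, so
\begin{equation}
\E\brak{L^2\one_{\set{L\le 0}}} = \tfrac12\E\brak{L^2} = \tfrac12 h^2\sigma^2\abs{g}^2
\end{equation}
by~\eqref{e:uniform-mean-variance}. This is the target up to the stated constant. Note that the $\frac14$ in~\eqref{e:D-almost-symmetric} does not match this symmetry computation, which gives $\frac12$. I would check this normalization against how $\sigma^2$ is used elsewhere in the paper. The rest of the argument is insensitive to the constant.

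\textbf{Error term.} It remains to bound
\begin{equation}
\abs{\E\brak{D^2\one_{\set{D\le0}}}-\E\brak{L^2\one_{\set{L\le0}}}}.
\end{equation}
Split it into two pieces.

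The first piece is $\E\brak{(D^2-L^2)\one_{\set{D\le0}}}$. Since $\abs{D^2-L^2}\le\abs{R}(2\abs{L}+\abs{R})\le Ch^2(h\abs{g}+h^2)=O(h^3)$, this piece is $O(h^3)$.

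The second piece is $\E\brak{L^2(\one_{\set{D\le0}}-\one_{\set{L\le0}})}$. The two indicators can differ only when $\abs{L}\le\abs{R}\le Ch^2$, and on that event $L^2\le C h^4$. So this piece is at most $Ch^4\,\mathbb P(\abs{L}\le Ch^2)$. The projection $g\cdot\zeta/\abs{g}$ has a bounded density, so
\begin{equation}
\mathbb P\paren{\abs{g\cdot\zeta}\le Ch/\abs{g}}\le C' h/\abs{g}.
\end{equation}
By~\eqref{e:DU-lb-D-symmetric}, $\abs{g}\ge ca\sqrt\epsilon$. Together with $a\ge1$ and $h\le\epsilon^2$, which gives $\sqrt\epsilon\ge h^{1/4}$, this yields $h/\abs{g}\le C h^{3/4}$. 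The second piece is therefore $O(h^{4.75})$, well within $O(h^{2.75})$.

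\textbf{Main obstacle.} The only delicate point is the boundary-layer event in which the sign of $D$ differs from that of $L$. It has to be controlled with constants independent of $a$ and $\epsilon$, which is exactly where the lower bound~\eqref{e:DU-lb-D-symmetric} enters. The step $h/\abs{g}\le Ch^{3/4}$ explains the exponent $2.75$: a cruder estimate that bounds $L^2$ by $h^2\abs{g}^2$ on this event gives $h^2\cdot h^{3/4}$.

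One could additionally use a third-order Taylor expansion with $\norm{D^3\hat U}\le C(a\sqrt\epsilon)^{-1}$, but this is not needed. Finally, $\hat\epsilon(a)$ is chosen only so that Lemma~\ref{l:P-regularity} applies, that is, so that $a\sqrt{\hat\epsilon}$ is small.
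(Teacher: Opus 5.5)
Your proof is correct, and you are right about the constant. Since $\E\brak{\zeta\zeta^T}=\sigma^2 I_d$, symmetry gives $\frac12 h^2\sigma^2\abs{D\hat U(x)}^2$, and the paper's own proof in fact ends with $\frac12$. The $\frac14$ in the statement is a misprint. The later drift estimates still close with $\frac12$, because the net coefficient of $\gamma\beta h^2\sigma^2\abs{D\hat U(x)}^2$ stays negative.

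Your skeleton is the paper's:
\begin{itemize}
\item linearize to $L=h\,D\hat U(x)\cdot\zeta$;
\item use that $\zeta$ and $-\zeta$ have the same law to get $\frac12\E\brak{L^2}$;
\item control the event where $D$ and $L$ have different signs.
\end{itemize}
You handle that last event differently, and better.

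\begin{itemize}
\item \emph{The paper's route.} It bounds $L^2=O(h^2)$ on that event and uses the gradient lower bound to show the event has probability $O(\epsilon^{3/2})$. This gives only an upper bound, with error $O(\epsilon^{3/2}h^2)$. Because $h\le\epsilon^2$ implies $h^{3/4}\le\epsilon^{3/2}$, this error is weaker than the advertised $O(h^{2.75})$. It is still harmless once multiplied by $\gamma\beta$.
\item \emph{Your route.} You observe that on that event $\abs{L}\le\abs{R}\le Ch^2$. This yields a genuine two-sided estimate with error $O(h^3)$.
\end{itemize}

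Note that your observation $L^2\le R^2\le Ch^4$ already bounds the second piece by $Ch^4$ on its own. So your probability estimate, and with it the hypothesis $\abs{D\hat U(x)}\ge ca\sqrt\epsilon$, are not needed. Your argument proves the corrected identity for every $x$, using only $\norm{\hat U}_{C^2}\le C$; the only role of $\hat\epsilon(a)$ is to make Lemma~\ref{l:P-regularity} applicable. Your closing remark, that the gradient lower bound is where the delicate point gets resolved, therefore overstates its role. That bound is used only in the paper's cruder treatment.
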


\begin{lemma}\label{l:D^2-exit-positive}
    For any~$a\geq1$, there exist~$\hat\epsilon (a), \eta >0$ such that for any~$\epsilon$ and~$h$ with~$\epsilon < \hat\epsilon$ and~$h \leq \eta \epsilon^2$, the following holds. If~$x \in \Omega_h$ satisfies
    \begin{equation}\label{e:DU-nondegenerate}
        \abs{D\hat U(x)} \geq ca\sqrt{\epsilon}\,,
    \end{equation}
    for some~$a,\epsilon, h$-independent constant~$c>0$, then
    \begin{equation}\label{e:D^2-exit-positive}
        \E\brak*{D^2 \one_{\set{D>0} \cap E}} \leq \frac{1}{4} h^2 \sigma^2 \abs{D\hat U(x)}^2 + O(h^{2.75})\,.
    \end{equation}
\end{lemma}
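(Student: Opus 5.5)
Write $g = D\hat U(x)$ and $D_1 = h\, g\cdot\zeta$ for the linear part of $D$. The plan is to compare $D$ with $D_1$ and then use the geometry of the half-space $\{\zeta \cdot g > 0\}$ relative to the exit event $E$.

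\textbf{Step 1: Taylor expansion.} Lemma~\ref{l:P-regularity} gives $\norm{\hat U}_{C^2}\le C$. Hence
\[
D = D_1 + R, \qquad |R| \le C h^2 .
\]
The assumption $|g|\ge c a\sqrt\epsilon$ together with $h\le\eta\epsilon^2$ gives $h \le \eta\epsilon^2 \lesssim |g|^4$, so $|g|\gtrsim h^{1/4}$. It follows that $D$ and $D_1$ have the same sign except on the slab
\[
\{|g\cdot\zeta| \le Ch/|g|\}\subset \{|\hat g\cdot\zeta|\le C h^{3/4}\},
\]
where $\hat g = g/|g|$. This slab has probability $O(h^{3/4})$, and on it $D^2 = O(h^2)$. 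Its contribution is therefore $O(h^{2.75})$. The same bound handles replacing $D^2$ by $D_1^2$ elsewhere, since
\[
|D^2 - D_1^2| \le C h^2\cdot h|g| + C h^4 = O(h^3).
\]
So it suffices to show
\[
\E\brak*{D_1^2 \one_{\{g\cdot\zeta>0\}\cap E}} \le \tfrac14 h^2\sigma^2|g|^2 + O(h^{2.75}).
\]
Note that $\tfrac14 h^2\sigma^2|g|^2$ is exactly $\tfrac12\E\brak*{D_1^2\one_{\{g\cdot\zeta>0\}}}$, by symmetry and $\E\brak*{(\hat g\cdot\zeta)^2} = \sigma^2$.

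\textbf{Step 2: Geometry of $E$.} Let $x\in\Omega_h$ and $n = n(\xi(x))$. By $C^5$ regularity of $\partial\Omega$ and uniform curvature bounds (Lemma~\ref{l:facts-on-boundaries}), the set $\{\zeta: x+h\zeta\notin\Omega\}$ lies, up to a symmetric-difference set of probability $O(h)$, in a half-space $\{n\cdot\zeta > s\}$ with $s = \dist(x,\partial\Omega)/h\ge0$. The error set has the form $\{|n\cdot\zeta - s|\le Ch\}$. Its $D_1^2$-weighted contribution is $O(h^3)$, which is harmless. It therefore suffices to bound
\[
\E\brak*{(\hat g\cdot\zeta)^2\one_{\{\hat g\cdot\zeta>0,\ n\cdot\zeta>s\}}} \le \tfrac12\sigma^2 + O(h^{3/4}).
\]

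\textbf{Step 3: The key inequality.} We need
\[
\E\brak*{(\hat g\cdot\zeta)^2\one_{\{\hat g\cdot\zeta>0\}}\one_{\{n\cdot\zeta>s\}}} \le \E\brak*{(\hat g\cdot\zeta)^2\one_{\{\hat g\cdot\zeta>0\}}} = \tfrac12\sigma^2,
\]
and this holds trivially because the indicator $\one_{\{n\cdot\zeta>s\}}$ is at most $1$. So the monotonicity step is free. The main obstacle is making Steps 1 and 2 uniform: quantifying the slab probabilities (the density of $\hat g\cdot\zeta$ under $\Unif(B(0,1))$ is bounded by a dimensional constant) and controlling the boundary approximation uniformly in $x\in\Omega_h$ with constants independent of $a,\epsilon$.

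\textbf{Step 4: Choice of constants.} Choose $\eta$ small so that $Ch/|g|\le h^{3/4}$. Choose $\hat\epsilon(a)$ so that $h\le\eta\epsilon^2$ stays below the scale where the Taylor and curvature estimates apply. Collecting the errors, which are all $O(h^{2.75})$ or smaller, gives~\eqref{e:D^2-exit-positive}.
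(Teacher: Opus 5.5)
There is a genuine gap, and it is in Step~3, where you say the monotonicity step is free. The error analysis in Steps~1 and~2 is fine, and your Step~1 target is correct: after dividing by $h^2|g|^2$ you need $\mathbb{E}[(\hat g\cdot\zeta)^2\one_{\{\hat g\cdot\zeta>0\}\cap E}]\le\frac14\sigma^2+O(h^{3/4})$. In Step~2, however, the target silently becomes $\frac12\sigma^2+O(h^{3/4})$, and Step~3 reaches it by discarding $\one_{\{n\cdot\zeta>s\}}$. Multiplying back, you have only shown $\mathbb{E}[D^2\one_{\{D>0\}\cap E}]\le\frac12 h^2\sigma^2|D\hat U(x)|^2+O(h^{2.75})$. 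That is the trivial bound, which holds even with $E$ deleted. The extra factor $\frac12$ coming from the exit event is the entire content of the lemma, and it is essential. $\mathbb{E}[D^2\one_{\{D\le 0\}}]$ is already about $\frac12\mathbb{E}[D^2]$. So with your bound, the term $(1+\nu)\gamma\beta\,\mathbb{E}[D^2\one_{\{D\le0\}\cup(\{D>0\}\cap E)}]$ in \eqref{e:I3-ub} would cancel the $-\gamma\beta\,\mathbb{E}[D^2]$ from \eqref{e:I1-ub} at leading order. No negative drift would then survive in Lemmas~\ref{l:lyapunov-near-saddle-boundary} and~\ref{l:lyapunov-far-saddle-boundary}.

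The $\frac14$ also cannot come from the half-space shape of $E$ alone. If $\hat g$ were parallel to $n=n(\xi(x))$ and $s\to0$, the left side of your Step~2 inequality would tend to $\frac12\sigma^2$. The missing ingredient is that $D\hat U(x)$ is almost tangent to $\partial\Omega$. Since $\partial\Omega$ is invariant under the gradient flow, $DU\cdot n=0$ on $\partial\Omega$, and Lemma~\ref{l:Utilde-vanish} gives $D\hat P\cdot n=0$ there. A Taylor expansion from $\xi(x)$ over the distance $\delta h$ then yields $D\hat U(x)\cdot n(\xi(x))=O(h)$ (Lemma~\ref{l:DU-normal}). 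Hence $|\hat g\cdot n|=O(h/|g|)=O(h^{3/4})$, using your own bound $|g|\gtrsim h^{1/4}$. The argument then runs as follows:
\begin{enumerate}
\item Use $s\ge0$ to enlarge $\{n\cdot\zeta>s\}$ to $\{n\cdot\zeta>0\}$.
\item Replace $\hat g$ by the unit vector $v$ in the direction of its tangential part. The cost is $O(h^{3/4})$: a slab of that width, plus the change in $(\hat g\cdot\zeta)^2$.
\item Use invariance of $\mathrm{Unif}(B(0,1))$ under the commuting reflections $\zeta\mapsto\zeta-2(v\cdot\zeta)v$ and $\zeta\mapsto\zeta-2(n\cdot\zeta)n$, where $v\perp n$. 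This gives $\mathbb{E}[(v\cdot\zeta)^2\one_{\{v\cdot\zeta>0,\ n\cdot\zeta>0\}}]=\frac14\sigma^2$.
\end{enumerate}
This is the decoupling the paper performs. $\{D>0\}$ is almost $\zeta_t$-measurable because $D\hat U(x)_n=O(h)$, $E$ is almost $\zeta_n$-measurable, and Lemma~\ref{l:small-geometric-term} supplies the factor $\frac14\sigma^2$.
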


Finally, we state two lemmas that will be used to ensure that, near the saddle, the Laplacian contribution also loses at most a constant factor compared to the generator case and remains sufficiently strong to decrease the Lyapunov function, despite the possibility that proposed moves exit the basin and are rejected.

\begin{lemma}\label{l:Dexit-equality}
    For any~$a\geq1$, there exists~$\hat\epsilon (a) >0$ such that for any~$\epsilon$ and~$h$ with~$\epsilon < \hat\epsilon$ and~$h \leq \epsilon^2$, the following holds.
    For any~$x\in \Omega_h$ such that $\dist(x,\partial\Omega) = \delta h$ for some~$\delta \in (0,1]$, $\xi(x)$ is well-defined as in Lemma~\ref{l:facts-on-boundaries} and
    \begin{equation}\label{e:Dexit-equality}
        \E\brak{D \one_E} = -h^2 \delta L + \frac{1}{2}h^2 Q + O(h^2\abs{D\hat U(x)}) + O(h^{2.75}) \,,
    \end{equation}
    where
        \begin{align}\label{e:L-def-Dexit-equality}
            L &= n(\xi(x))^T H(\xi(x)) n(\xi(x)) \E\brak{\zeta_n \one_E}\,,\\
            Q &= n^T \hat H(x) n \E\brak*{\zeta_n^2 \one_E} + \sum_i t_i^T \hat H(x) t_i \E\brak*{\zeta_{t,i}^2 \one_E}\,.
        \end{align}
    Here, $H=D^2U$ and $\hat H = D^2 \hat U$ respectively, and~$\zeta_n = \zeta \cdot n(\xi(x))$ and~$\zeta_{t,i} = \zeta \cdot t_i(\xi(x))$.
\end{lemma}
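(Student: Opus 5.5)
We Taylor expand $\hat U$ to second order at $x$ along the proposal and integrate against $\one_E$; the exit event is then controlled in boundary-adapted coordinates at $\xi=\xi(x)$. Write $Y=x+h\zeta$ and
\begin{equation}
D = h\, D\hat U(x)\cdot\zeta + \tfrac12 h^2 \zeta^T \hat H(x)\zeta + R,
\end{equation}
where Lemma~\ref{l:P-regularity} bounds the remainder by $|R|\le C h^3 (a\sqrt\epsilon)^{-1}$. Since $h\le\epsilon^2$ and $\epsilon<\hat\epsilon(a)$, we have $h^3(a\sqrt\epsilon)^{-1}\le h^{2.75}$, so $R$ contributes only $O(h^{2.75})$. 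The first-order term contributes $h\,D\hat U(x)\cdot\E[\zeta\one_E]$. This is not a priori $O(h^2|D\hat U|)$, so it has to be treated together with the geometry below.

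\textbf{Boundary coordinates.} Since $\dist(x,\partial\Omega)=\delta h\le h<r_0$, Lemma~\ref{l:facts-on-boundaries} makes $\xi$ well defined, and $x=\xi-\delta h\, n(\xi)$. Because $\partial\Omega$ is $C^5$ with uniformly bounded curvature, near $\xi$ the domain is $\{y_n<\phi(y_t)\}$ in these coordinates, with $\phi=O(|y_t|^2)$. Hence $E$ differs from the half-space event $E_0=\{\zeta_n>\delta\}$ only on a set where $|\zeta_n-\delta|=O(h)$, which has probability $O(h)$. On that symmetric-difference set the integrand $D$ is $O(h|D\hat U|+h^2)$. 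Its contribution is therefore $O(h^2|D\hat U|)+O(h^3)$, and we may replace $E$ by $E_0$ in the principal terms, keeping $E$ inside the expectations $\E[\zeta_n\one_E]$, $\E[\zeta_n^2\one_E]$ as in the statement.

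\textbf{First-order term.} Decompose $D\hat U(x)$ along $n(\xi)$ and the $t_i(\xi)$. On $E_0$ the tangential components of $\zeta$ are symmetric, so their contribution is $O(h\cdot h|D\hat U|)$, using the $O(h)$ error from $E$ versus $E_0$. The normal component is the delicate part, and we expand it once more at $\xi$. By Lemma~\ref{l:Utilde-vanish}, $D\hat P(\xi)\cdot n(\xi)=0$. Also $\partial\Omega$ is invariant under the gradient flow of $U$ (it is the stable manifold of the saddle together with its closure), so $DU(\xi)\cdot n(\xi)=0$. Together these give $D\hat U(\xi)\cdot n=0$, and hence
\begin{equation}
D\hat U(x)\cdot n = -\delta h\, n^T\hat H(\xi) n + O(h^2 \|D^3\hat U\|) .
\end{equation}
Lemma~\ref{l:Utilde-vanish} also gives $n^TD^2\hat P(\xi)n=0$, so $n^T\hat H(\xi)n=n^TH(\xi)n$. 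Multiplying by $h\,\E[\zeta_n\one_E]$ yields exactly $-h^2\delta L$. The error is $h^3(a\sqrt\epsilon)^{-1}=O(h^{2.75})$.

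\textbf{Second-order term.} This is $\tfrac12 h^2\E[\zeta^T\hat H(x)\zeta\one_E]$. Expanding $\zeta$ in the frame $\{n,t_i\}$, the cross terms $\zeta_n\zeta_{t,i}$ and $\zeta_{t,i}\zeta_{t,j}$ with $i\ne j$ vanish on $E_0$ by reflection symmetry in the tangential coordinates. Replacing $E_0$ by $E$ costs $O(h^3)$. What remains is exactly $\tfrac12h^2Q$.

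\textbf{Main obstacle.} The hardest step is the first-order normal term. A naive bound gives only $O(h|D\hat U|)$, which is far too large, and the estimate closes only because the normal derivative of $\hat U$ vanishes on $\partial\Omega$. This vanishing needs both Lemma~\ref{l:Utilde-vanish} and the invariance of $\partial\Omega$, and it requires the third-derivative bound of Lemma~\ref{l:P-regularity} to be absorbed via $h\le\epsilon^2$. A second, more technical, difficulty is making the $E$ versus $E_0$ comparison uniform in $x$. For this we use the uniform curvature bounds of the compact $C^5$ boundary from Lemma~\ref{l:facts-on-boundaries}.
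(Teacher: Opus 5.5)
Your proposal is correct and follows essentially the same route as the paper. Both arguments use the second-order Taylor expansion with an $O(h^3/(a\sqrt\epsilon))=O(h^{2.75})$ remainder. Both expand the normal gradient at $\xi(x)$ using the Neumann condition and Lemma~\ref{l:Utilde-vanish}, which is the paper's Lemma~\ref{l:DU-normal}. Both sandwich $E$ between $\zeta_n$-measurable half-space events and use reflection symmetry to make the tangential and cross terms $O(h)$, which is the paper's Lemma~\ref{l:small-geometric-term}. The only difference is that you prove those two auxiliary lemmas inline, whereas the paper cites them.
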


\begin{lemma}\label{l:zeta_n-positivenesses}
There exists a dimensional constant~$w>0$ such that for all sufficiently small~$h$ and~$x\in \Omega_h$, $\xi(x)$ is well-defined as in Lemma~\ref{l:facts-on-boundaries} and
\begin{gather}
    \label{e:zeta_n-positivenesses}
    \E\brak{\zeta_n \one_E} \geq 0\,,\quad \E\brak{\zeta_n^2 \one_{E^c}} \geq w\,.
\end{gather}
Here, $\zeta_n$ is defined as in Lemma~\ref{l:Dexit-equality}.
\end{lemma}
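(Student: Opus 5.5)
We need to prove Lemma \ref{l:zeta_n-positivenesses}: for small $h$ and $x\in\Omega_h$, $\E[\zeta_n\one_E]\ge 0$ and $\E[\zeta_n^2\one_{E^c}]\ge w$. Here $\zeta_n=\zeta\cdot n(\xi(x))$, with $n$ the outward normal at the projection of $x$.

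The plan is to compare $\Omega$, near $\xi(x)$ and at scale $h$, with the half-space $\{y\st (y-\xi(x))\cdot n(\xi(x))<0\}$. The comparison uses the $C^5$ regularity of $\partial\Omega$ and the uniform curvature bound from Lemma~\ref{l:facts-on-boundaries}.

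\textbf{Setup.} First take $h<r_0$, so that $x\in\Omega_h\subset\Gamma_{r_0}$ and $\xi(x)$ is well-defined. Write $n=n(\xi(x))$ and $x=\xi(x)-\delta h\, n$ with $\delta\in(0,1]$. By compactness, $\partial\Omega$ is locally a graph over its tangent plane with second fundamental form bounded by some $\kappa_0$. Hence, in the ball $B(\xi(x),2h)$, the complement $\Omega^c$ contains the shifted half-space $\{y\st (y-\xi(x))\cdot n> C h^2\}$ and is contained in $\{y\st (y-\xi(x))\cdot n> -C h^2\}$. In terms of $\zeta$, with $Y=x+h\zeta$:
\begin{equation}
  \{\zeta_n>\delta+Ch\}\cap B(0,1)\subset E\subset\{\zeta_n>\delta-Ch\}.
\end{equation}

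\textbf{Second inequality.} The event $\{\zeta_n<0\}$ lies in $E^c$ once $\delta>Ch$. The delicate case $\delta\le Ch$ is handled the same way using $\{\zeta_n<-2Ch\}$. In either case,
\begin{equation}
  \E[\zeta_n^2\one_{E^c}]\ge \E[\zeta_n^2\one_{\zeta_n<-2Ch}]\to \tfrac12\E[\zeta_n^2]=\tfrac12\sigma^2 \quad\text{as } h\to0,
\end{equation}
so $w=\sigma^2/4$ works for $h$ small.

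\textbf{First inequality (main obstacle).} The difficulty is that $E$ is only approximately a half-space, so $\zeta_n$ could be negative on a thin sliver of $E$. Write
\begin{equation}
  \E[\zeta_n\one_E]\ge \E[\zeta_n\one_{\zeta_n>\delta+Ch}]-\E\bigl[|\zeta_n|\one_{\delta-Ch<\zeta_n\le\delta+Ch}\bigr].
\end{equation}
When $\delta\ge 2Ch$, the sliver has $\zeta_n>0$, so its contribution is nonnegative and the inequality is immediate.

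When $\delta<2Ch$, the first term is bounded below by a dimensional constant $c>0$, while the sliver term is $O(h\cdot h)=O(h^2)$: the slab has width $O(h)$ and $|\zeta_n|\le 3Ch$ on it. This gives positivity for small $h$.

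If one wants exactly $\ge0$ uniformly in the intermediate regime, a cleaner route is to use the interior ball condition. By the reach bound, the ball of radius $r_0$ tangent at $\xi(x)$ from inside lies in $\Omega$. Then every point of $E$ with $\zeta_n<0$ would lie outside that ball while having $(Y-\xi)\cdot n<0$, which forces $|Y-\xi|\gtrsim\sqrt{r_0|(Y-\xi)\cdot n|}$; for $|Y-\xi|\le 2h$ this confines such points to $\zeta_n\ge -O(h)$. Symmetric pairing $\zeta\mapsto$ (reflection in $n$) then shows that positive mass dominates. The $O(h^2)$ error against an $O(1)$ (or, for $\delta$ near $1$, at least comparable) positive main term is the step I would check most carefully.
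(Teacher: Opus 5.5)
Your proposal is correct and is essentially the paper's proof. Both arguments sandwich $E$ between $\{\zeta_n \ge \delta + Ch\}$ and $\{\zeta_n \ge \delta - Ch\}$; you get this from the local graph and curvature bound for $\partial\Omega$, while the paper uses a second-order Taylor expansion of the signed distance function $d$. Both then split on whether $\delta$ exceeds a multiple of $h$, bound the thin slab, and get the second inequality from the same sandwich. Your closing interior-ball paragraph is unnecessary: the cases $\delta \ge 2Ch$ (where $E\subset\{\zeta_n>0\}$ exactly) and $\delta<2Ch$ (where the $O(1)$ main term beats the $O(h^2)$ slab term) already cover every $\delta\in(0,1]$.
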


The most delicate part of the proof of the drift condition~\eqref{e:lyapunov} arises when~$x$ is near the saddle and close to the boundary. We therefore begin with this case.

\begin{lemma}\label{l:lyapunov-near-saddle-boundary}
    For any sufficiently large~$a$, there exist $\hat\epsilon (a), \eta, \hat  \gamma,\lambda > 0$ such that for all~$\epsilon, h, \gamma$ with
    \begin{equation}\label{e:small-temperature-small-stepsize}
\epsilon \leq \hat\epsilon\,,\quad
h \le \eta\epsilon^2\,,
\quad\text{and}\quad
\gamma < \hat  \gamma\,,
\end{equation}
    it holds that
    \begin{equation}
        \hat Q_{h, \epsilon}W / W (x) - 1 \leq -\lambda \gamma h^2\,, \forall x \in \Omega_h \cap B(0, \rho a\sqrt{\epsilon})\,. 
    \end{equation}
    Here, $\rho$ is defined as in~\eqref{e:rho-def}.
\end{lemma}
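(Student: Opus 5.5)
We will start from the boundary decomposition~\eqref{e:lyapunov-to-generator-close-to-boundary} of Lemma~\ref{l:lyapunov-to-generator}: for $x\in\Omega_h$, $\hat Q_{h,\epsilon}W/W(x)-1\le I_1+I_3$. We then insert~\eqref{e:I1-ub}, \eqref{e:I1tilde-ub} and~\eqref{e:I3-ub}, which gives
\begin{equation*}
I_1+I_3\le -\gamma\beta\E[D^2]+(1+\nu)\gamma\beta\E\bigl[D^2\one_{\{D\le0\}\cup(\{D>0\}\cap E)}\bigr]+\tfrac12\gamma h^2\sigma^2\Delta\hat U(x)-\gamma\E[D\one_E]+\nu\gamma h^2 .
\end{equation*}
We split according to whether $\abs{D\hat U(x)}\ge c a\sqrt\epsilon$ (with $c$ small, chosen below) or not.

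\emph{Gradient-dominated case}, $\abs{D\hat U(x)}\ge ca\sqrt\epsilon$. Lemmas~\ref{l:D-almost-symmetric} and~\ref{l:D^2-exit-positive} bound the second expectation by $\tfrac12 h^2\sigma^2\abs{D\hat U}^2+O(h^{2.75})$. Together with~\eqref{e:I1tilde-ub}, the $\beta$-terms are then at most
\begin{equation*}
-\gamma\beta h^2\sigma^2\abs{D\hat U}^2\bigl(1-\tfrac{1+\nu}{2}\bigr)+O(\gamma\beta h^{2.75}).
\end{equation*}
Since $h\le\eta\epsilon^2$, we have $\beta h^{2.75}\le \eta^{0.75}\epsilon^{0.5}h^2$, which is negligible. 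The main term is at most $-\tfrac14\gamma\sigma^2 c^2a^2h^2$ for $\nu<1/2$. The remaining terms are $\Delta\hat U=O(1)$ by Lemma~\ref{l:P-regularity}, $\nu\gamma h^2$, and $-\gamma\E[D\one_E]$. For the last one we use Lemma~\ref{l:Dexit-equality}: it gives $-\E[D\one_E]=h^2\delta L-\tfrac12 h^2Q+O(h^2\abs{D\hat U})+O(h^{2.75})$. Here $Q=O(1)$ and $L$ is $O(1)$. The $O(h^2\abs{D\hat U})$ error is absorbed by $\beta h^2\abs{D\hat U}^2$ because $\beta\abs{D\hat U}\ge ca/\sqrt\epsilon\gg1$. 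Choosing $a$ large (depending on $c$), the term $-\tfrac14\gamma\sigma^2c^2a^2h^2$ dominates all $O(\gamma h^2)$ terms, which gives decay $-\lambda\gamma h^2$.

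\emph{Laplacian-dominated case}, $\abs{D\hat U(x)}<ca\sqrt\epsilon$. By Lemma~\ref{l:DU-lb-ind-ae}, $\abs x\le (c/c_0)a\sqrt\epsilon$, so for $c$ small $x$ lies in the region where $\chi\equiv1$. There $\hat U$ equals $\tfrac12x^T(H_u+\kappa P_s)x$ up to $O(\abs x^3)$ corrections, and $D^2\hat U(x)=H_u+\kappa P_s+O(a\sqrt\epsilon)$ by~\eqref{e:P-regularity} and Lemma~\ref{l:Utilde-saddle-almost-local-maximum}. We simply drop the $\beta$-terms after bounding them: their net contribution is $\le\gamma\beta\nu h^2\sigma^2\abs{D\hat U}^2+O(\gamma\beta h^{2.75})$, hmm — this is not negligible, since $\beta\abs{D\hat U}^2$ can be of order $c^2a^2$. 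To handle this we keep $c$ so small that $\nu c^2a^2$ is controlled by the Laplacian gain, i.e.\ $\nu$ is chosen after $c,a$, which is allowed since $\eta,\hat\gamma$ depend on $\nu$.

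The core of this case, and the step I expect to be the main obstacle, is to show
\begin{equation*}
\tfrac12\sigma^2\Delta\hat U(x)+\delta L-\tfrac12 Q\le -c'
\end{equation*}
uniformly in $\delta\in(0,1]$. First, $\delta L\le0$: $n^THn\approx-\lambda_u<0$ near the saddle by Lemma~\ref{l:facts-on-boundaries}\eqref{i:tangent-normal-eigenvector}, and $\E[\zeta_n\one_E]\ge0$ by Lemma~\ref{l:zeta_n-positivenesses}. Next, writing $\sigma^2=\E[\zeta_n^2]=\E[\zeta_{t,i}^2]$ and splitting each second moment into its $E$ and $E^c$ parts, the combination $\tfrac12\sigma^2\Delta\hat U-\tfrac12Q$ becomes $\tfrac12\bigl(n^T\hat Hn\,\E[\zeta_n^2\one_{E^c}]+\sum_i t_i^T\hat Ht_i\,\E[\zeta_{t,i}^2\one_{E^c}]\bigr)$. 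Near the saddle $n^T\hat Hn\approx-\lambda_u$ and $t_i^T\hat Ht_i\approx\kappa$. Using $\E[\zeta_n^2\one_{E^c}]\ge w$ and $\E[\zeta_{t,i}^2\one_{E^c}]\le\sigma^2$, this is at most $\tfrac12(-\lambda_u w+(d-1)\kappa\sigma^2)+O(a\sqrt\epsilon)$. This is negative by the choice $\kappa<\bar c_d\lambda_u$ with $\bar c_d\approx w/((d-1)\sigma^2)$ from~\eqref{e:kappa-def}. The closing step is to pick $\nu$, $\eta$ and then $\hat\epsilon(a)$ small so that all errors are below $c'/2$, which yields $\lambda$.
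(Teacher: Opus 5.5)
Your proposal is correct and is essentially the paper's proof. It uses the same reduction through Lemma~\ref{l:lyapunov-to-generator} and the same near-saddle mechanism: $\delta L\le 0$, the $E^c$-weighted Hessian combination bounded using $\E[\zeta_n^2\one_{E^c}]\ge w$ and small $\kappa$, and $\nu$ chosen after $a$ to absorb $\nu\gamma\beta h^2\sigma^2\abs{D\hat U(x)}^2\le \nu\gamma h^2\sigma^2c^2a^2$. It closes the gradient-dominated case the same way, via Lemmas~\ref{l:D-almost-symmetric} and~\ref{l:D^2-exit-positive} and large $a$; the only difference is cosmetic, since you split on whether $\abs{D\hat U(x)}\ge ca\sqrt{\epsilon}$ rather than on whether $\abs{x}\ge ca\sqrt{\epsilon}$, which is equivalent via \eqref{e:DU-lb} and $\abs{D\hat U(x)}\le M\abs{x}$.
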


\begin{proof}
Eventually, for a sufficiently large~$a \geq 1$, we will choose a small constant~$\nu \leq 1$ depending on~$a$. 
For the moment, fix~$a$ and~$\nu$. By Lemma~\ref{l:lyapunov-to-generator}, we can find~$\hat\epsilon$, $\eta$, and~$\hat \gamma$ such that for all~$\epsilon$ and~$h$ satisfying~\eqref{e:small-temperature-small-stepsize}, the bounds~\eqref{e:lyapunov-to-generator-close-to-boundary}, \eqref{e:I1-ub}, \eqref{e:I1tilde-ub}, and~\eqref{e:I3-ub} hold.

\textbf{Step 1: Near the saddle, i.e.\ $x \in \Omega_h \cap B(0, ca\sqrt{\epsilon})$ for some small~$c>0$.}

We combine~\eqref{e:lyapunov-to-generator-close-to-boundary}, \eqref{e:I1-ub}, \eqref{e:I3-ub}, and the bound 
\begin{gather}
\beta h \leq \eta \epsilon \leq \eta\,,
\end{gather}
so that after shrinking~$\eta$ if necessary, we obtain
\begin{equation}\label{e:I1'I2'I3'-bound-sum-close-saddle}
\hat Q_{h, \epsilon} W / W (x) -1 \leq I_1' + I_2' + I_3'\,,
\end{equation}
where
\begin{align}
I_1' &= -\gamma\beta \E\brak*{D^2\one_{\set{D > 0} \cap E^c}} \leq 0\,,\\
I_2' &= \nu \gamma \beta  \E\brak{D^2 \one_{\set{D\leq 0}  \cup \paren*{\set{D>0}\cap E}}} 
\overset{\eqref{e:I1tilde-ub}}{\leq} 
\nu \gamma \beta h^2\sigma^2 \abs{D\hat U(x)}^2 + \nu \gamma h^2\,,\\
I_3' &= \frac{1}{2} \gamma h^2 \sigma^2 \Delta \hat U (x) + \nu \gamma h^2 - \gamma \E\brak{D\one_E}\,,
\end{align}
and therefore
\begin{equation}
\hat Q_{h, \epsilon} W / W (x) -1
\leq
\nu \gamma \beta h^2\sigma^2 \abs{D\hat U(x)}^2
+ \frac{1}{2} \gamma h^2 \sigma^2 \Delta \hat U (x)
- \gamma \E\brak{D\one_E}
+ 2\nu \gamma h^2\,.
\end{equation}

By choosing~$\hat \epsilon$ sufficiently small and applying Lemma~\ref{l:Dexit-equality}, together with the identity
\begin{equation}
\Delta \hat U (x)
=
\tr \paren*{\hat H(x)}
=
\tr \paren*{P^T \hat H(x)P}
=
n^T \hat H(x) n
+
\sum_i t_i^T \hat H(x) t_i\,,
\end{equation}
where
\begin{equation}
P =
\begin{bmatrix}
n(\xi(x)) & t_1(\xi(x)) & \ldots & t_{d-1}(\xi(x))
\end{bmatrix}
\end{equation}
is an orthogonal matrix, we obtain
\begin{multline}\label{e:PW/W-near-saddle}
\hat Q_{h, \epsilon} W / W (x) -1
\leq\\
\nu  \gamma \beta h^2 \sigma^2\abs{D\hat U(x)}^2
+ \gamma h^2 \delta L
+ \frac{1}{2}\gamma h^2 Q'(x) + C\gamma h^2\abs{D\hat U(x)}
+ 3\nu\gamma h^2\,,
\end{multline}
where~$L$ and~$\delta$ are defined as in Lemma~\ref{e:Dexit-equality} and
\begin{gather}
Q'(x) = n^T \hat H(x) n  w_n(x) + \sum_i t_i^T \hat H(x) t_i w_{t,i}(x)\,,\\
w_n(x) = \E\brak*{\zeta_n^2 \one_{E^c}}\,, \quad
w_{t, i}(x) = \E\brak*{\zeta_{t,i}^2 \one_{E^c}}\,.
\end{gather}

We observe that at $x=0$, the saddle point,
\begin{gather}
n(\xi(x))^T \hat H(x) n(\xi(x))
=
n(0)^T \hat H(0) n(0)
\overset{\eqref{e:Utilde-almost-maximum-at-saddle}}{=}
-\lambda_u\,,\\
t_i^T (\xi(x)) \hat H(x) t_i(\xi(x))
=
t_i^T (0) \hat H(0) t_i(0)
\overset{\eqref{e:Utilde-almost-maximum-at-saddle}}{=}
\kappa\,.
\end{gather}

We observe that the maps
\begin{gather}
g_n\colon B(0, \rho a\sqrt{\epsilon}) \to \R\,, 
\quad x \mapsto n^T(\xi(x))\hat H(x) n(\xi(x))\,,\\
g_{t,i}\colon B(0, \rho a\sqrt{\epsilon}) \to \R\,, 
\quad x \mapsto t_i^T(\xi(x))\hat H(x) t_i(\xi(x))
\end{gather}
have Lipschitz norm of order $O\paren*{\frac{1}{a\sqrt{\epsilon}}}$. 
Indeed, items~(\ref{i: boundary-regularity}) and~(\ref{i:pi-well-def}) in Lemma~\ref{l:facts-on-boundaries} imply that $n, t_i \in C^4(\partial\Omega)$, and~$\xi \in C^4(\Gamma_{r_0})$ so they have $O(1)$-Lipschitz norm on $B(0,1)$, while $\hat H$ has $O\paren*{\frac{1}{a\sqrt{\epsilon}}}$-Lipschitz norm due to~\eqref{e:U-regularity}. 

We now define
\begin{align}
Q''(x) &=
n(0)^T \hat H(0) n(0) w_n(x)
+
\sum_i t_i(0)^T \hat H(0) t_i(0) w_{t,i}(x)\\
&= -\lambda_u  w_n(x) + \kappa \sum_i w_{t,i}(x)  
\,.
\end{align}

Using $w_n \overset{\eqref{e:zeta_n-positivenesses}}{\geq} w$, we see that for some $a,\epsilon$-independent constant $c>0$ and for all $x\in \Omega_h \cap B(0, ca\sqrt{\epsilon})$,
\begin{align}
Q'(x)
&\leq
\abs{Q'(x) - Q''(x)} + Q''(x) \\
&\leq
\paren*{\abs*{g_n}_{C^{0,1}} + \sum_i\abs{g_{t, i}}_{C^{0,1}}}\abs{x}
-\lambda_u w + (d-1)\kappa \\
\label{e:Q'-away-from-zero}
&\leq -\frac{1}{4}\lambda_u w \,,
\end{align}
provided that~$\kappa$ satisfies
\begin{equation}\label{e:smallkappa-negative-laplacian}
-\lambda_u w + (d-1)\kappa \leq -\frac{\lambda_u w}{2}
\iff
\kappa \leq \frac{\lambda_u w}{2(d-1)}\,,
\end{equation}
which we assumed in~\eqref{e:kappa-def} with the choice of
\begin{equation}\label{e:cd-def}
   \bar c_d = \frac{ w}{2(d-1)}
\end{equation}

Similarly, let~$r_0$ be as in Definition~\ref{d:P-perturb-def} and define $g_L\colon B(0, r_0)\to \R$ as
\begin{equation}
g_L(x) = n(\xi(x))^T H(\xi(x)) n(\xi(x))\,.
\end{equation}
Then~$g_L$ has~$O(1)$-Lipschitz norm, so that for sufficiently small~$a\sqrt{\epsilon}$ and~$x\in B(0, \rho a\sqrt{\epsilon})$,
\begin{equation}
g_L(x)
\leq
g_L(0) + C\abs{x}
\leq
-\lambda_u + Ca\sqrt{\epsilon}
\leq
-\frac{1}{2}\lambda_u < 0\,,
\end{equation}
and hence, combining this with~\eqref{e:zeta_n-positivenesses} implies
\begin{equation}\label{e:L-negative-close-saddle}
L(x) = g_L(x)\E\brak{\zeta_n \one_E} < 0\,,\quad
\forall x\in B(0, \rho a\sqrt{\epsilon})\,.
\end{equation}

Combining~\eqref{e:PW/W-near-saddle}, \eqref{e:Q'-away-from-zero}, \eqref{e:L-negative-close-saddle}, and using the fact that
\begin{equation}\label{e:DU-ub}
\abs{D\hat U(x)}
\overset{\eqref{e:Utilde-almost-maximum-at-saddle}}{=}
\abs{D\hat U(x) - D\hat U(0)}
\leq
M\abs{x}\,, \quad\text{where}\quad M\defeq \norm{D^2\hat U}_{L^\infty}\overset{\eqref{e:U-regularity}}{=}O(1)\,,
\end{equation}
we obtain that for all $x\in \Omega_h \cap B(0, ca\sqrt{\epsilon})$,
\begin{equation}
\hat Q_{h, \epsilon} W / W(x) - 1
\leq
\gamma h^2 \paren*{
\sigma^2 M^2 c^2 a^2\nu
-\frac{1}{8}\lambda_u w
+ cCMa\sqrt{\epsilon} + 3\nu
}\,.
\end{equation}
Shrinking~$\hat\epsilon$ and~$\nu$ if necessary completes this step.

\textbf{Step 2: Away from the saddle but still inside the perturbation region.}

The case $x\in\Omega_h \cap B(0, ca\sqrt{\epsilon})$ has already been treated, so it remains to consider
$x\in \Omega_h \cap B(0, \rho a\sqrt{\epsilon}) \cap B(0, ca\sqrt{\epsilon})^c$.

We first note that~\eqref{e:DU-lb} implies that there exists a constant~$c_2>0$ such that for all sufficiently small~$\epsilon$ and~$x\in B(0, \rho a\sqrt{\epsilon}) \cap B(0, ca\sqrt{\epsilon})^c$,
\begin{equation}\label{e:DU-lb-with-a}
\abs{D\hat U (x)} \geq c_2 a\sqrt{\epsilon}\,.
\end{equation}

This implies that the assumptions of Lemma~\ref{l:D-almost-symmetric} and Lemma~\ref{l:D^2-exit-positive} are satisfied. 
Combining~\eqref{e:lyapunov-to-generator-close-to-boundary}, 
\eqref{e:I1-ub}, \eqref{e:I3-ub}, 
\eqref{e:D-almost-symmetric}, \eqref{e:D^2-exit-positive}, 
\eqref{e:L-negative-close-saddle}, and~\eqref{e:P-regularity} (with~$i=2$), 
and applying Young's inequality
\begin{equation}
2\gamma h^2 \abs{D\hat  U(x)}
\leq
\nu \gamma \beta h^2 \sigma^2 \abs{D\hat U(x)}^2
+
\nu^{-1}\epsilon \sigma^{-2} \gamma  h^2 
\end{equation}
yield that for sufficiently small~$\nu, \eta, \epsilon$,
\begin{equation}
\hat Q_{h, \epsilon} W / W (x) - 1
\leq
-\frac{1}{8}\gamma\beta h^2\sigma^2 \abs{D\hat U(x)}^2
+
M\gamma h^2\,,
\end{equation}
for some large $a,\epsilon$-independent~$O(1)$ constant~$M$.

Combining this with~\eqref{e:DU-lb-with-a}, we obtain
\begin{equation}
\hat Q_{h, \epsilon} W/W(x) - 1
\leq
\paren*{-\frac{1}{8}c_2^2 a^2 \sigma^2 + M} \gamma h^2\,.
\end{equation}

Finally, enlarging~$a$ if necessary completes the proof.
\end{proof}

The next case we consider is when the particle is still near the saddle but far from the boundary. The proof is essentially identical to that of Lemma~\ref{l:lyapunov-near-saddle-boundary}, except that we no longer need to estimate the terms involving the exit event~$E$, since the process remains away from the boundary in this regime.

\begin{lemma}\label{l:lyapunov-near-saddle-inside}
    For any sufficiently large~$a\geq 1$, there exist $\hat\epsilon (a), \eta, \hat  \gamma,\lambda > 0$ such that for all~$\epsilon, h, \gamma$ with
    \begin{equation}
\epsilon \leq \hat\epsilon\,,\quad
h \le \eta\epsilon^2\,,
\quad\text{and}\quad
\gamma < \hat  \gamma\,,
\end{equation}
    it holds that
    \begin{equation}
        \hat Q_{h, \epsilon}W / W (x) - 1 \leq -\lambda \gamma h^2\,, \forall x \in \paren*{\Omega\setminus\Omega_h} \cap B(0, \rho a\sqrt{\epsilon})\,. 
    \end{equation}
    Here, $\rho$ is defined as in~\eqref{e:rho-def}.
\end{lemma}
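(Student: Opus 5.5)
We follow the proof of Lemma~\ref{l:lyapunov-near-saddle-boundary}, but use the interior bound~\eqref{e:lyapunov-to-generator-far-from-boundary} in place of the boundary one. Fix $a\ge1$ and a small $\nu\le1$, to be chosen depending on $a$. Lemma~\ref{l:lyapunov-to-generator} then supplies $\hat\epsilon,\eta,\hat\gamma$ such that, for every $x\in(\Omega\setminus\Omega_h)\cap B(0,\rho a\sqrt\epsilon)$,
\[
\hat Q_{h,\epsilon}W/W(x)-1\le I_1+I_2 .
\]
Here $I_1$ and $I_2$ are controlled by~\eqref{e:I1-ub}, \eqref{e:I1tilde-ub} and~\eqref{e:I2-ub}. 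Since $\dist(x,\partial\Omega)\ge h$, the proposal never leaves $\Omega$, so $E=\emptyset$ and no exit terms appear. We split into the same two regimes as before.

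\textbf{Step 1: $x\in B(0,ca\sqrt\epsilon)$ with $c$ small.} Write $-\tilde I_1=-\E[D^2\one_{\set{D\le0}}]-\E[D^2\one_{\set{D>0}}]$. Absorb $(1+\nu)\gamma\beta\E[D^2\one_{\set{D\le 0}}]$ against $-\gamma\beta\E[D^2\one_{\set{D\le0}}]$, and drop the nonpositive term $-\gamma\beta\E[D^2\one_{\set{D>0}}]$. Using~\eqref{e:I1tilde-ub} and $\beta h\le\eta$ to control the $O(h^3)$ error, this gives
\[
\hat Q_{h,\epsilon}W/W(x)-1\le \nu\gamma\beta h^2\sigma^2|D\hat U(x)|^2+\tfrac12\gamma h^2\sigma^2\Delta\hat U(x)+2\nu\gamma h^2 .
\]
By Lemma~\ref{l:Utilde-saddle-almost-local-maximum}, $\Delta\hat U(0)=-\lambda_u+(d-1)\kappa\le-\lambda_u/2$, where we use~\eqref{e:kappa-def} with $\bar c_d\le 1/(2(d-1))$. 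Indeed $w\le\sigma^2\le1$, so the choice~\eqref{e:cd-def} already ensures this. Moreover $\Delta\hat U$ is $O((a\sqrt\epsilon)^{-1})$-Lipschitz by~\eqref{e:U-regularity}, so for $|x|\le ca\sqrt\epsilon$ with $c$ a small absolute constant we get $\Delta\hat U(x)\le-\lambda_u/4$. Together with $|D\hat U(x)|\le M|x|\le Mca\sqrt\epsilon$ from~\eqref{e:DU-ub}, the right side is at most
\[
\gamma h^2\bigl(\nu\sigma^2M^2c^2a^2-\tfrac18\sigma^2\lambda_u+2\nu\bigr),
\]
which is $\le-\lambda\gamma h^2$ once $\nu$ is small relative to $a^{-2}$.

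\textbf{Step 2: $x\in B(0,\rho a\sqrt\epsilon)\setminus B(0,ca\sqrt\epsilon)$.} Lemma~\ref{l:DU-lb-ind-ae} gives $|D\hat U(x)|\ge c_2a\sqrt\epsilon$, so Lemma~\ref{l:D-almost-symmetric} applies and yields
\[
\E[D^2\one_{\set{D\le0}}]=\tfrac14h^2\sigma^2|D\hat U|^2+O(h^{2.75}).
\]
Hence
\[
I_1+I_2\le-\gamma\beta h^2\sigma^2|D\hat U|^2\bigl(1-\tfrac{1+\nu}{4}\bigr)+\tfrac12\gamma h^2\sigma^2\Delta\hat U+O(\gamma\beta h^{2.75})+\nu\gamma h^2 .
\]
The error satisfies $\beta h^{2.75}\le \eta^{0.75}\epsilon^{0.5}h^2$, which is negligible. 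Also $|\Delta\hat U|\le C$ by~\eqref{e:U-regularity}. We therefore obtain
\[
\hat Q_{h,\epsilon}W/W-1\le\bigl(-\tfrac12\sigma^2c_2^2a^2+M\bigr)\gamma h^2 ,
\]
and enlarging $a$ (then shrinking $\nu,\eta,\hat\epsilon$) finishes the proof.

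The main obstacle is Step 1, namely ensuring that the negative Laplacian survives the $\nu$-weighted gradient term. This is resolved by the order of choices: first $c$, then $a$, then $\nu$ small depending on $a$, and finally $\hat\epsilon$. This ordering must be kept consistent with the choices made in Step 2 and in Lemma~\ref{l:lyapunov-near-saddle-boundary}.
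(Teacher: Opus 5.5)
Your proposal is correct and follows the paper's proof essentially step for step. It uses the same split at radius $ca\sqrt{\epsilon}$, and the same absorption of $(1+\nu)\gamma\beta\E[D^2\one_{\{D\le 0\}}]$ into $-\gamma\beta\tilde I_1$ together with $\Delta\hat U(0)<0$ and $\nu$ small relative to $a^{-2}$ near the saddle. Away from the saddle it likewise uses Lemmas~\ref{l:DU-lb-ind-ae} and~\ref{l:D-almost-symmetric} and then enlarges $a$. You also cite the correct interior bounds \eqref{e:lyapunov-to-generator-far-from-boundary} and \eqref{e:I2-ub}, whereas the paper's Step~1 refers to the boundary versions. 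One harmless caveat: the proof of Lemma~\ref{l:D-almost-symmetric} actually gives the constant $\tfrac12$ rather than the stated $\tfrac14$ (with error $O(\epsilon^{3/2}h^2)$). This only changes your Step~2 coefficient to $1-\tfrac{1+\nu}{2}\ge\tfrac14$ for $\nu\le\tfrac12$, and the error is still negligible after multiplying by $\gamma\beta$, so your conclusion is unaffected.
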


\begin{proof}
As in the proof of Lemma~\ref{l:lyapunov-near-saddle-boundary}, given a sufficiently large~$a\geq1$, we will choose a small constant~$\nu\leq 1$, depending on~$a$. 
For the moment, fix~$a$ and~$\nu$. By Lemma~\ref{l:lyapunov-to-generator}, we can find~$\hat\epsilon$, $\eta$, and~$\hat \gamma$ such that for all~$\epsilon$ and~$h$ satisfying~\eqref{e:small-temperature-small-stepsize}, the bounds~\eqref{e:lyapunov-to-generator-far-from-boundary}, \eqref{e:I1-ub}, \eqref{e:I1tilde-ub}, and~\eqref{e:I2-ub} hold.

\textbf{Step 1: Near the saddle, i.e.\ $x \in \paren*{\Omega \setminus \Omega_h}\cap B(0, ca\sqrt{\epsilon})$ for some small~$c>0$.}

We combine~\eqref{e:lyapunov-to-generator-close-to-boundary}, \eqref{e:I1-ub}, \eqref{e:I3-ub}, and the bound 
\begin{gather}
\beta h \leq \eta \epsilon \leq \eta\,,
\end{gather}
so that after shrinking~$\eta$ if necessary, we obtain
\begin{equation}\label{e:I1'I2'I3'-bound-sum-close-saddle}
\hat Q_{h, \epsilon} W / W (x) -1 \leq I_1' + I_2' + I_3'\,,
\end{equation}
where
\begin{align}
I_1' &= -\gamma\beta \E\brak*{D^2\one_{\set{D > 0}}} \leq 0\,,\\
I_2' &= \nu \gamma \beta  \E\brak{D^2 \one_{\set{D\leq 0}  }} 
\overset{\eqref{e:I1tilde-ub}}{\leq} 
\nu \gamma \beta h^2\sigma^2 \abs{D\hat U(x)}^2 + \nu \gamma h^2\,,\\
I_3' &= \frac{1}{2} \gamma h^2 \sigma^2 \Delta \hat U (x) + \nu \gamma h^2 \,,
\end{align}
and therefore
\begin{equation}\label{e:PW/W-near-saddle-inside}
\hat Q_{h, \epsilon} W / W (x) -1
\leq
\nu \gamma \beta h^2\sigma^2 \abs{D\hat U(x)}^2
+ \frac{1}{2} \gamma h^2 \sigma^2 \Delta \hat U (x)
+ 2\nu \gamma h^2\,.
\end{equation}
Recall that~\eqref{e:Utilde-almost-maximum-at-saddle} implies
\begin{equation}
    \Delta \hat U (0) = \mathrm{Tr}\paren*{D^2 \hat U(0)} = -\lambda_u + (d-1)\kappa \overset{\eqref{e:smallkappa-negative-laplacian}, w\leq 1}{\leq} -\frac{\lambda_u w}{2}\,,
\end{equation}
and hence, by the regularity of~$\hat U$ in~\eqref{e:U-regularity}, there exists a small~$c>0$, independent of~$a,\epsilon$ such that for any~$x\in B(0, ca\sqrt{\epsilon})$,
\begin{align}
    \Delta \hat U (x)&\leq -\frac{\lambda_u w}{4}\,.
\end{align}
Combining this with~\eqref{e:PW/W-near-saddle-inside} and~\eqref{e:DU-ub} yields that for any~$x\in \paren*{\Omega \setminus \Omega_h} \cap B(0, ca\sqrt{\epsilon})$,
\begin{equation}
\hat Q_{h, \epsilon} W / W(x) - 1
\leq
\gamma h^2 \paren*{
\sigma^2 M^2 c^2 a^2\nu
-\frac{1}{8}\lambda_u w \sigma^2
+ 2\nu
}\,.
\end{equation}

Shrinking~$\nu$ if necessary completes this step.

\textbf{Step 2: Away from the saddle but still inside the perturbation region.}

The case $x\in \paren*{\Omega \setminus \Omega_h} \cap B(0, ca\sqrt{\epsilon})$ has already been treated, so it remains to consider
$x\in \paren*{\Omega \setminus \Omega_h} \cap B(0, \rho a\sqrt{\epsilon}) \cap B(0, ca\sqrt{\epsilon})^c$.

We note that~\eqref{e:DU-lb} implies that there exists a constant~$c_1>0$ such that for all sufficiently small~$\epsilon$ and~$x\in B(0, \rho a\sqrt{\epsilon}) \cap B(0, ca\sqrt{\epsilon})^c$, we have
\begin{equation}
    \abs{D\hat U(x)} \geq c_1a\sqrt{\epsilon}\,.
\end{equation}
This implies that the assumption of Lemma~\ref{l:D-almost-symmetric} is satisfied and hence~\eqref{e:D-almost-symmetric} holds.
Combining~\eqref{e:lyapunov-to-generator-far-from-boundary}, \eqref{e:I1-ub}, \eqref{e:I1tilde-ub}, and~\eqref{e:D-almost-symmetric}, we obtain that for sufficiently small~$\nu, \eta, \epsilon$,
\begin{align}
\hat Q_{h, \epsilon} W / W (x) - 1
&\leq
-\frac{1}{2}\gamma\beta h^2\sigma^2 \abs{D\hat U(x)}^2
+
M\gamma h^2\\
&\leq \paren*{-\frac{1}{2}c_1^2 a^2 \sigma^2 + M} \gamma h^2\,.
\end{align}
for some large $a,\epsilon$-independent~$O(1)$ constant~$M$.
Finally, enlarging~$a$ if necessary completes the proof.
\end{proof}

Outside the perturbation region~$B(0, \rho a\sqrt{\epsilon})$, the argument becomes simpler, as we work directly with the original potential~$U$. When the particle is close to the boundary, however, the exit event must still be taken into account.

\begin{lemma}\label{l:lyapunov-far-saddle-boundary}
    For any sufficiently large~$a\geq 1$, there exist $\hat\epsilon (a), \eta, \hat  \gamma,\lambda > 0$ such that for all~$\epsilon, h, \gamma$ with
    \begin{equation}
\epsilon \leq \hat\epsilon\,,\quad
h \le \eta\epsilon^2\,,
\quad\text{and}\quad
\gamma < \hat  \gamma\,,
\end{equation}
    it holds that
    \begin{equation}
        \hat Q_{h, \epsilon}W / W (x) - 1 \leq -\lambda \gamma h^2\,, \forall x \in \Omega_h \cap B(0, \rho a\sqrt{\epsilon})^c\,. 
    \end{equation}
    Here, $\rho$ is defined as in~\eqref{e:rho-def}.
\end{lemma}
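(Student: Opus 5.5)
The plan mirrors Step~2 of the proof of Lemma~\ref{l:lyapunov-near-saddle-boundary}, now in the simpler regime where $\hat U = U$ near $x$. Fix $a$ large and $\nu\le 1$ small. We invoke Lemma~\ref{l:lyapunov-to-generator} to obtain $\hat\epsilon,\eta,\hat\gamma$ such that \eqref{e:lyapunov-to-generator-close-to-boundary}, \eqref{e:I1-ub}, \eqref{e:I1tilde-ub} and \eqref{e:I3-ub} hold.

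The key point is a gradient lower bound. For $x\in\Omega_h\cap B(0,\rho a\sqrt\epsilon)^c$ we want
\[
\abs{D\hat U(x)}\ge c\,a\sqrt\epsilon .
\]
Near the saddle, i.e.\ for $\rho a\sqrt\epsilon\le\abs{x}\le r_1$, this follows from \eqref{e:DU-lb}. Away from $B(0,r_1)$ it comes from a compactness argument. The boundary $\partial\Omega$ contains no critical point of $U$ other than $0$ in a neighbourhood $\Gamma_{r'}$: the minima $m_1,m_2$ are interior to their basins, and any other critical point on $\partial\Omega$ would be a higher saddle or maximum. If necessary, I would argue that index-one saddle critical points other than $0$ lying on $\partial\Omega$ are excluded, or else handled by the same $\kappa$-perturbation. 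This is the step I am least sure of; the cleanest route is to state that $\abs{DU}\ge c_*>0$ on $\Gamma_{r'}\setminus B(0,r_1)$ by continuity and compactness, which is what the paper's structure implicitly uses. Since $x\in\Omega_h$ with $h\le\eta\epsilon^2$ small, $x$ lies in this tube, so $\abs{DU(x)}\ge c_*\ge c a\sqrt\epsilon$ once $\hat\epsilon$ is small. Note also that $\hat P$ vanishes on a neighbourhood of $x$ up to scale $h$, since $\supp\hat P\subset B(0,(1+\tilde a)a\sqrt\epsilon)$ and $\rho a\sqrt\epsilon-(1+\tilde a)a\sqrt\epsilon\gg h$. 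Hence $D=U(Y)-U(x)$ and all derivatives are $O(1)$.

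With the gradient bound in hand, Lemmas~\ref{l:D-almost-symmetric} and~\ref{l:D^2-exit-positive} apply. They give
\[
(1+\nu)\gamma\beta\,\E\bigl[D^2\one_{\{D\le0\}\cup(\{D>0\}\cap E)}\bigr]\le (1+\nu)\gamma\beta\bigl(\tfrac12 h^2\sigma^2\abs{DU}^2+O(h^{2.75})\bigr),
\]
and \eqref{e:I1-ub}--\eqref{e:I1tilde-ub} give $I_1\le-\gamma\beta h^2\sigma^2\abs{DU}^2+O(\gamma\beta h^3)+C\gamma h^2$. The exit term $-\gamma\E[D\one_E]$ is controlled by Lemma~\ref{l:Dexit-equality}. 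There $\delta L$ has a sign issue only through $g_L$, and since $\abs{L},\abs{Q}=O(1)$ the whole term is at most $C\gamma h^2(1+\abs{DU(x)})+O(\gamma h^{2.75})$; crude bounds suffice here. Alternatively, a Taylor expansion gives $\abs{\E[D\one_E]}\le h\abs{DU}\,\E\abs{\zeta}+O(h^2)$, but that is too big, so Lemma~\ref{l:Dexit-equality} is needed.

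To conclude, apply Young's inequality $C\gamma h^2\abs{DU}\le\nu\gamma\beta h^2\sigma^2\abs{DU}^2+C\nu^{-1}\epsilon\gamma h^2$, and use $\beta h^{0.75}\le\eta^{0.75}\epsilon^{0.5}$ to absorb the $O(h^{2.75})$ errors. This yields
\[
\hat Q_{h,\epsilon}W/W(x)-1\le-\tfrac18\gamma\beta h^2\sigma^2\abs{DU(x)}^2+M\gamma h^2\le\bigl(-\tfrac18 c^2a^2\sigma^2+M\bigr)\gamma h^2 .
\]
Enlarging $a$ finishes the proof, and gives an even better bound $-c\gamma h^2/\epsilon$ away from $B(0,r_1)$. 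The main obstacle is the uniform gradient lower bound on the boundary tube away from the saddle, together with checking that $\hat P\equiv0$ on the relevant $h$-neighbourhood; the remaining estimates are those already assembled.
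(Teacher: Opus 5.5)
Your proposal is correct and follows essentially the same route as the paper. You establish $\abs{D\hat U(x)}\ge c\,a\sqrt{\epsilon}$ on $\Omega_h\cap B(0,\rho a\sqrt{\epsilon})^c$, apply Lemmas~\ref{l:D-almost-symmetric} and~\ref{l:D^2-exit-positive} to the quadratic terms, bound $\gamma\E\brak{D\one_E}$ via Lemma~\ref{l:Dexit-equality} plus Young's inequality, and then enlarge $a$. The step you were unsure about is resolved in the paper just as you suggest, by treating $\set{m_1,m_2,0}$ as the full critical set of $U$ (stated explicitly in the proof of Lemma~\ref{l:lyapunov-far-critical-inside}); your remark that $\hat P\equiv 0$ on $B(x,h)$ is a small extra precision the paper leaves implicit.
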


\begin{proof}
    Note that by the definition of~$\hat U$ in Definition~\ref{d:Ptilde-def}, $\hat U = U$ on~$B(0,\rho a \sqrt{\epsilon})^c$. This implies that there exists a constant~$C_U$, independent of~$a,\epsilon$, such that for all~$a,\epsilon$ with~$a\sqrt{\epsilon}$ sufficiently small,
    \begin{equation}\label{e:DtildeU-lb-far-from-saddle}
        \abs{D\hat U(x)} = \abs{DU(x)} \geq C_Ua\sqrt{\epsilon}\,,\quad \forall x\in B(0,\rho a\sqrt{\epsilon})^c\,.
    \end{equation}
    This implies that the assumptions of Lemma~\ref{l:D-almost-symmetric} and Lemma~\ref{l:D^2-exit-positive} are satisfied so that~\eqref{e:D-almost-symmetric} and~\eqref{e:D^2-exit-positive} hold.
    Moreover, using~\eqref{e:Dexit-equality} and the Young's inequality
    \begin{equation}
        Ch^2\abs{D\hat U(x)} \leq  \frac{1}{8}\beta h^2\sigma^2\abs{D\hat U(x)}^2 + 2C^2\sigma^{-2}\epsilon h^2
    \end{equation}
    yields 
    \begin{equation}\label{e:gammaD-exit-ub-far-saddle-boundary}
        \abs*{\gamma \E\brak*{D\one_E}} \leq \frac{1}{8}\gamma\beta h^2\sigma^2\abs{D\hat U(x)}^2 + M\gamma h^2 
    \end{equation}
    for some large~$a,\epsilon$-independent~$O(1)$ constant~$M$ and all sufficiently small~$\eta, \epsilon$.
    Using~\eqref{e:lyapunov-to-generator-close-to-boundary}, \eqref{e:I1-ub}, \eqref{e:I1tilde-ub}, \eqref{e:I3-ub}, \eqref{e:D-almost-symmetric}, \eqref{e:D^2-exit-positive}, \eqref{e:gammaD-exit-ub-far-saddle-boundary}, and shrinking~$\nu, \eta, \epsilon$ if necessary, we obtain
    \begin{equation}
    \hat Q_{h, \epsilon} W / W (x) - 1
    \leq
    -\frac{1}{8}\gamma\beta h^2\sigma^2 \abs{D\hat U(x)}^2
    +
    M\gamma h^2\,,
    \end{equation}
    
    Combining this with~\eqref{e:DtildeU-lb-far-from-saddle}, we obtain
    \begin{equation}
    \hat Q_{h, \epsilon} W/W(x) - 1
    \leq
    \paren*{-\frac{1}{8}C_U^2 a^2 \sigma^2 + M} \gamma h^2\,.
    \end{equation}
    Finally, enlarging~$a$ if necessary completes the proof.
    
\end{proof}

\begin{lemma}\label{l:lyapunov-far-critical-inside}
    For any sufficiently large~$a\geq 1$, there exist $\hat\epsilon (a), \eta, \hat  \gamma,\lambda > 0$ such that for all~$\epsilon, h, \gamma$ with
    \begin{equation}
\epsilon \leq \hat\epsilon\,,\quad
h \le \eta\epsilon^2\,,
\quad\text{and}\quad
\gamma < \hat  \gamma\,,
\end{equation}
    it holds that
    \begin{equation}
        \hat Q_{h, \epsilon}W / W (x) - 1 \leq -\lambda \gamma h^2\,,\quad \forall x \in \paren*{\Omega\setminus\Omega_h} \cap B(0, \rho a\sqrt{\epsilon})^c \cap B(m_1,  a\sqrt{\epsilon})^c\,. 
    \end{equation}
    Here, $\rho$ is defined as in~\eqref{e:rho-def}.
\end{lemma}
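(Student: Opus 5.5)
This is the simplest of the four cases. Here $x$ stays at distance at least $h$ from $\partial\Omega$, so a proposal $Y=x+h\zeta$ never leaves $\Omega$ and no exit terms appear. Also $x$ lies outside $B(0,\rho a\sqrt\epsilon)$, so $\hat U=U$ near $x$.

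\textbf{Step 1: the decomposition.} Fix $a$ large and a small $\nu\le 1$ to be chosen. By Lemma~\ref{l:lyapunov-to-generator}, specifically \eqref{e:lyapunov-to-generator-far-from-boundary}, \eqref{e:I1-ub}, \eqref{e:I1tilde-ub} and \eqref{e:I2-ub}, for $\epsilon\le\hat\epsilon$, $h\le\eta\epsilon^2$ and $\gamma<\hat\gamma$ we get
\begin{equation}
\hat Q_{h,\epsilon}W/W(x)-1 \le -\gamma\beta\,\E[D^2] + (1+\nu)\gamma\beta\,\E\brak{D^2\one_{\set{D\le0}}} + \tfrac12\gamma h^2\sigma^2\Delta\hat U(x) + \nu\gamma h^2 .
\end{equation}

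\textbf{Step 2: a lower bound on the gradient.} I need $\abs{D\hat U(x)}\ge C_U a\sqrt\epsilon$ on the region in question. This holds because the only critical points of $U$ in $\overline\Omega$ are $m_1$ and the saddle $0$. The other critical points of $U$ lie on $\partial\Omega$ away from $0$ or in other basins, and they are at positive distance from the set where $\Omega\setminus\Omega_h$ meets a neighbourhood of $m_1$. Nondegeneracy (Assumption~\ref{a:criticalpts}) then gives $\abs{DU(y)}\ge c\abs{y-m_1}$ near $m_1$ and $\abs{DU(y)}\ge c\abs{y}$ near $0$. Away from all critical points in $\overline\Omega$, $\abs{DU}$ is bounded below by a positive constant by compactness.

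The main care is with critical points of higher index lying on $\partial\Omega$. These are other saddles on the boundary, for which Assumption~\ref{a: nondegeneracy} gives no control. I would handle them as in \eqref{e:DtildeU-lb-far-from-saddle}, whose lower bound on $B(0,\rho a\sqrt\epsilon)^c$ I take as available. If it is needed only on $\Omega\setminus\Omega_h$, the same compactness argument applies: any such critical point is nondegenerate, so $\abs{DU}$ grows linearly away from it. Its $O(1)$-dimensional neighbourhood then causes trouble only at scale $\sqrt\epsilon$, and there I would rely on the already stated bound. In either case $\abs{DU(x)}\ge C_Ua\sqrt\epsilon$ for $\epsilon$ small.

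\textbf{Step 3: applying Lemma~\ref{l:D-almost-symmetric}.} Since $h\le\eta\epsilon^2\le\epsilon^2$, Lemma~\ref{l:D-almost-symmetric} applies and gives $\E[D^2\one_{\set{D\le 0}}]=\tfrac14h^2\sigma^2\abs{D\hat U}^2+O(h^{2.75})$. Together with $\E[D^2]=h^2\sigma^2\abs{D\hat U}^2+O(h^3)$ this yields
\begin{equation}
\hat Q_{h,\epsilon}W/W(x)-1\le -\tfrac12\gamma\beta h^2\sigma^2\abs{D\hat U(x)}^2 + M\gamma h^2 ,
\end{equation}
for $\nu\le 1$, with $M=O(1)$. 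The constant $M$ absorbs three contributions:
\begin{itemize}
\item $\tfrac12\sigma^2\norm{\Delta \hat U}_{L^\infty}$, using Lemma~\ref{l:P-regularity};
\item $\nu$;
\item the error terms $\gamma\beta O(h^{2.75})$. These are bounded by $\gamma h^2\cdot\beta h^{0.75}\le\gamma h^2\,\eta^{0.75}\epsilon^{0.5}$, which is at most $\gamma h^2$.
\end{itemize}

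\textbf{Step 4: conclusion.} Inserting $\beta\abs{D\hat U}^2\ge C_U^2a^2$ gives
\begin{equation}
\hat Q_{h,\epsilon}W/W(x)-1\le\paren*{-\tfrac12C_U^2\sigma^2a^2+M}\gamma h^2 .
\end{equation}
Enlarging $a$ so that this coefficient is at most $-\lambda$, and then shrinking $\hat\epsilon(a)$ so that $a\sqrt{\hat\epsilon}$ stays small, proves the lemma. The only real obstacle is the uniform gradient lower bound in Step 2; everything else is bookkeeping using the previous lemmas.
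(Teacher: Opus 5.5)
Steps~1, 3 and~4 reproduce the paper's proof: the same bound from Lemma~\ref{l:lyapunov-to-generator}, the same use of Lemma~\ref{l:D-almost-symmetric} to reach $-\tfrac12\gamma\beta h^2\sigma^2\abs{D\hat U(x)}^2+M\gamma h^2$, and the same final choice of large $a$. (The symmetry argument behind Lemma~\ref{l:D-almost-symmetric} actually gives the factor $\tfrac12$ rather than $\tfrac14$. With $\nu\le\tfrac12$ you still get the coefficient $-\tfrac14$, so this is harmless.)

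The gap is in Step~2, which as written does not prove the gradient bound. You first assert that $m_1$ and $0$ are the only critical points in $\overline\Omega$. You then say that other critical points lie on $\partial\Omega\subset\overline\Omega$, and for those you fall back on \eqref{e:DtildeU-lb-far-from-saddle}. That display cannot be ``taken as available'', for three reasons. Read literally on all of $B(0,\rho a\sqrt\epsilon)^c$ it is false, since it already fails at $m_1$. In the proof where it appears it is only used on $\Omega_h$. And near a critical point $p\in\partial\Omega$ no bound of this type can hold on $\Omega\setminus\Omega_h$: such $x$ can lie at distance of order $h$ from $p$, where $\abs{DU(x)}=O(h)\ll a\sqrt\epsilon$. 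Deferring the $\sqrt\epsilon$-neighbourhood of $p$ to ``the already stated bound'' is therefore circular.

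The paper gets Step~2 from the Morse estimate $\abs{DU(x)}\ge C_U\min\{1,\dist(x,S)\}$, with $S=\{m_1,m_2,0\}$ declared to be the set of all critical points of $U$. On $\overline\Omega$ this is your opening sentence. Granting it, $m_2\notin\overline\Omega$ and $\rho>1$ give $\dist(x,S)\ge a\sqrt\epsilon$ on the region, and you are done.

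Your suspicion is nevertheless justified, because this hypothesis never holds:
\begin{itemize}
\item $\partial\Omega\neq\{0\}$, since otherwise $\Omega$ would be a nonempty proper subset of the connected set $\mathbb T^d\setminus\{0\}$ that is both open and closed in it.
\item $\partial\Omega$ is invariant under the gradient flow.
\item Hence $\partial\Omega$ contains either a critical point other than $0$, or a non-stationary orbit whose $\alpha$- and $\omega$-limits are two distinct critical points in $\partial\Omega$.
\end{itemize}
Either way there is a critical point $p\neq0$ on $\partial\Omega$, and what is really needed is a local argument at each such $p$, not a citation.
\begin{itemize}
\item If $\Delta U(p)<0$ (for instance a local maximum), the Laplacian term does the work on $B(p,ca\sqrt\epsilon)$, exactly as in Step~1 of Lemma~\ref{l:lyapunov-near-saddle-inside}, with $\nu$ small depending on $a$. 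The gradient bound then covers the rest of the region.
\item If $\Delta U(p)>0$, the statement itself fails. At $x=p-2h\,n(p)\in\Omega\setminus\Omega_h$ there are no exits and $D=O(h^2)$, so $\hat Q_{h,\epsilon}W/W(x)-1=\tfrac12\gamma h^2\sigma^2\Delta U(p)+o(\gamma h^2)>0$.
\end{itemize}
In the second case you need either an extra hypothesis on the boundary critical points or a perturbation of $U$ near $p$ analogous to $\hat P$.
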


\begin{proof}
    Recall that~$\hat U = U$ on~$B(0, \rho a\sqrt{\epsilon})^c$. Therefore, for all sufficiently small~$a\sqrt{\epsilon}$,
\begin{equation}\label{e:DtildeU-lb-far-from-critical}
    \abs{D\hat U(x)} = \abs{DU(x)} \geq C_U a\sqrt{\epsilon}\,,\quad 
    \forall x\in B(0,\rho a\sqrt{\epsilon})^c \cap B(m_1, a\sqrt{\epsilon})^c\,,
\end{equation}
since~$U$, being a Morse function, satisfies
\[
\abs{DU(x)} \geq C_U \min\set*{1, \dist(x, S)}
\]
for some constant~$C_U>0$ and all~$x\in \T^d$, where~$S=\set{m_1, m_2, 0}$ denotes the set of critical points of~$U$.
    Then, Lemma~\ref{l:D-almost-symmetric} applies and~\eqref{e:D-almost-symmetric} holds.
    Using~\eqref{e:lyapunov-to-generator-far-from-boundary}, \eqref{e:I1-ub}, \eqref{e:I1tilde-ub}, \eqref{e:I2-ub}, and~\eqref{e:DtildeU-lb-far-from-critical}, we obtain that for sufficiently small~$\nu, \eta, \epsilon$,
    \begin{align}
    \hat Q_{h, \epsilon} W / W (x) - 1
    &\leq
    -\frac{1}{2}\gamma\beta h^2\sigma^2 \abs{D\hat U(x)}^2
    +
    M\gamma h^2\\
    &\leq \paren*{-\frac{1}{2}C_U^2 a^2 \sigma^2 + M} \gamma h^2\,.
    \end{align}
    for some large $a,\epsilon$-independent~$O(1)$ constant~$M$.
    Finally, enlarging~$a$ if necessary completes the proof.
    \end{proof}

Combining the above lemmas, which treat the four cases separately, we obtain the proof of Lemma~\ref{l:lyapunov}.

\begin{proof}[Proof of Lemma~\ref{l:lyapunov}]
Fix sufficiently large~$a$ such that Lemma~\ref{l:lyapunov-near-saddle-boundary}--\ref{l:lyapunov-far-critical-inside} hold. 
Then, for sufficiently small~$\epsilon$, the perturbation~$\hat P$ is well-defined as in~\eqref{e:Ptilde-def}, and shrinking~$\epsilon$ further if necessary, the definition of~$\hat P$ implies $\hat U = U$ on~$B(m_1, a \sqrt{\epsilon})$. This completes the proof for~\eqref{e:Utilde-equals-U}.
Moreover, applying \eqref{e:P-regularity} with~$i=0$ implies the second property~\eqref{e:L^infty-perturb} of~$\hat U$. 

For the Lyapunov condtion~\eqref{e:lyapunov}, Lemma~\ref{l:lyapunov-near-saddle-boundary}--\ref{l:lyapunov-far-critical-inside} implies that we can find $\hat\epsilon, \eta, \hat  \gamma, \lambda$ such that for all~$\epsilon, h, \gamma>0$ that satisfy~\eqref{e:small-temperature-small-stepsize} and~$x \in \Omega \cap B(m_1, a\sqrt{\epsilon})^c$, it holds that
\begin{equation}\label{e:lyapunov-outside-local-minimum}
        \hat Q_{h, \epsilon}W / W (x) - 1 \leq -\lambda \gamma h^2\,.
\end{equation}
Moreover, \eqref{e:Utilde-equals-U} and the fact that~$U(m_1)=DU(m_1)=0$ imply that for all~$x\in B(m_1, a\sqrt{\epsilon})$,
\begin{equation}
 \hat U(x) = U(x) \leq C_U\abs{x}^2 \leq C_U a^2\epsilon\,,   
\end{equation}
and
\begin{equation}
    \hat U(Y) \overset{\eqref{e:D-def}}{=} \hat U(x) + D \overset{\eqref{e:U-regularity}}{\leq} \hat U(x) + Ch\,.
\end{equation}
Thus, for sufficiently small~$\epsilon, \eta, \gamma$, we obtain that for all~$x \in B(m_1, a\sqrt{\epsilon})$,
\begin{equation}\label{e:lyapunov-inside-minima}
     \hat Q_{h, \epsilon}W(x) \leq \E\brak*{\exp\paren*{\gamma \max\set*{\hat U(Y), \hat U(x)}}} \leq 1 \,.
\end{equation}
Setting~$b=1$ and combining~\eqref{e:lyapunov-outside-local-minimum} with~\eqref{e:lyapunov-inside-minima} completes the proof for the third property~\eqref{e:lyapunov} of~$\hat U$.
\end{proof}

\subsection{Proofs for the Lyapunov estimates}\label{s:lyapunov-estimate-proof}
In this subsection, we provide the proofs of the Lyapunov estimates in Lemmas~\ref{l:lyapunov-to-generator}--\ref{l:zeta_n-positivenesses} that were used in the previous subsection. We begin by proving Lemma~\ref{l:lyapunov-to-generator} using Taylor expansions and the definition of the Metropolis random walk.

\begin{proof}[Proof of Lemma~\ref{l:lyapunov-to-generator}]
Given any~$a\geq 1$, we set~$\hat\epsilon $ such that~$a\sqrt{\hat\epsilon} < r_0$, where~$r_0$ is defined as in Definition~\ref{d:Ptilde-def}. Then, for any~$\epsilon < \hat\epsilon$, $\hat P$ in~\eqref{e:Ptilde-def} is well-defined and so is~$\hat U$ as in~\eqref{e:Utilde-def}. Moreover, given any~$\gamma, h>0$, $W$ and~$\hat Q_{h, \epsilon}$ are also well-defined as in~\eqref{e:Wdef} and Lemma~\ref{l:lyapunov}. 
\smallskip

$\mathbf{\textbf{Far from boundary: } x\in \Omega\setminus \Omega_h}\,.$ 
We first prove~\eqref{e:lyapunov-to-generator-far-from-boundary} and the bounds~\eqref{e:I1-ub} and~\eqref{e:I2-ub}.
Let $\gamma \leq 1 $ and assume~$h \leq \epsilon^2 \leq 1$. Given any~$a \geq 1$, let~$a\sqrt{\epsilon}$ be sufficiently small such that~\eqref{e:P-regularity} implies
\begin{equation}\label{e:Utilde-regularity}
    \norm{\hat U}_{C^2} = O(1)\quad\text{and}\quad \norm{\hat U}_{C^3} = O\paren*{\paren*{a\sqrt{\epsilon}}^{-1}}\,, 
\end{equation}
We define a function~$g: \R \to \R$ such that
\begin{equation}
   g(y) = \paren{e^{\gamma  y} - 1} \min\set{1, e^{-\beta y}}\,.
\end{equation}
Fixing~$x\in\Omega$ such that~$\dist(x, \partial\Omega) > h$, setting~$D$ as in~\eqref{e:D-def}, and using the definition of~$\hat Q_{h, \epsilon}$ imply
\begin{align}
    &\frac{\hat Q_{h,\epsilon}W}{W}(x) - 1 = \E\brak{g(D)} = \E \brak{g(D) \one_{\set{D > 0}} }+ \E \brak{g(D) \one_{\set{D< 0}} }\\
    &= \E\brak{\paren{\exp\paren{\gamma D}-1} \exp\paren{-\beta D}} + \E\brak{\paren{\exp\paren{\gamma D} - 1} \paren{1-\exp\paren{-\beta D}} \one_{\set{D<0}}}\\
    \label{e:PW/W-1-bound}
    &\leq I_1 + I_2\,,
\end{align}
which proves~\eqref{e:lyapunov-to-generator-far-from-boundary}.
We see that~$D = O(h)$ so using Taylor expansion, we obtain
\begin{align}
    \exp\paren{\gamma D} - 1 &= \gamma D + O\paren{\gamma D}^2\,, \\
    \exp\paren{-\beta D} &= 1 - \beta D + O\paren{\beta D}^2\,.
\end{align}
Thus, 
\begin{align}
    \paren{\exp\paren{\gamma D}-1} \exp\paren{-\beta D} &= \gamma D- \gamma\beta D^2 + \gamma h^2 O(\beta^2  h + \gamma + \gamma\beta h + 
    \gamma \beta^2 h^2)\\
    &= \gamma D- \gamma\beta D^2 + \gamma h^2 O(\eta + \gamma + \gamma\eta)
\end{align}
so for sufficiently small~$\eta$ and~$\gamma$, 
\begin{equation}\label{e:gammaDbetaD-ub1}
    \paren{\exp\paren{\gamma D}-1} \exp\paren{-\beta D} \leq \gamma D- \gamma\beta D^2 + \frac{1}{2}\nu \gamma h^2\,.
\end{equation}
Finally, using Taylor expansion for~$D$ and combining it with $D^3\hat U \overset{\eqref{e:Utilde-regularity}}{=} O\paren*{(a\sqrt{\epsilon})^{-1}}$ and~$a\geq1$, we obtain
\begin{align}
\label{e:D-first-order}
    D &= h\ip{\zeta, D\hat U(x)} +  O\paren{h^2}\\
\label{e:D-second-order}
    &= h\ip{\zeta, DU(x)} + \frac{1}{2}h^2 \ip{\zeta, D\hat U^2(x)\zeta} + O(h^{2.75})\,, 
\end{align}
and hence, combining this with~\eqref{e:uniform-mean-variance},
\begin{align}\label{e:ED-equality}
    \E\brak{D} &\overset{\eqref{e:D-second-order}}{=} \frac12 h^2 \sigma^2 \Delta \hat U + O(h^{2.75})\,,\\
    \label{e:ED^2-equality}
    I_1' = \E\brak{D^2} &\overset{\eqref{e:D-first-order}}{=} h^2\sigma^2 \abs{D\hat U(x)}^2 + O(h^3)\,.
\end{align}
Combining~\eqref{e:gammaDbetaD-ub1} and~\eqref{e:ED-equality}, and decreasing~$\eta$ if necessary, we obtain~\eqref{e:I1-ub}. Using~\eqref{e:ED^2-equality} yields~\eqref{e:I1tilde-ub}.

To estimate~$I_2$, we see that
\begin{equation}\label{e:taylor-inequality}
0\leq e^y-1\leq e^y y \quad\text{and}\quad  0\leq 1-e^{-y} \leq y \,,\quad\forall y \geq 0\,,    
\end{equation}
so that for any given~$\nu>0$, we can choose~$\beta h \leq \eta\epsilon \leq \eta$ sufficiently small and use~$\abs{D}\leq Ch$ for some~$C$, independent of~$\eta$, to satisfy
\begin{equation}\label{e:small-exp-betaD}
    \exp\paren*{-\beta D}\one_{\set{D<0}}\leq \exp\paren{\eta C} \one_{\set{D<0}} \leq (1+\nu)\one_{\set{D<0}}\,,
\end{equation}
and consequently,
\begin{equation}\label{e:I1-bound}
    I_2 \leq (1+\nu)\beta\gamma \E\brak{D^2 \one_{\set{D<0}}}\,,
\end{equation}
which proves~\eqref{e:I2-ub}.
\smallskip

$\mathbf{\textbf{Close to boundary: } x\in\Omega_h}\,.$
Now, we prove~\eqref{e:lyapunov-to-generator-close-to-boundary} and the bounds~\eqref{e:I1-ub} and~\eqref{e:I3-ub}. Again, let $\gamma \leq 1 $ and assume~$h \leq \epsilon^2 \leq 1$. Given any~$a \geq 1$, let~$a\sqrt{\epsilon}$ be sufficiently small such that~\eqref{e:Utilde-regularity} holds. Using the definition of the chain~$\hat Q_{h, \epsilon}$ and the event~$E$ in~\eqref{e:E-def}, we obtain
\begin{align}
    \frac{\hat Q_{h, \epsilon} W}{W}(x) - 1 &= \E\brak{g(D)\one_{E^c}} =\E\brak{g(D)\one_{E^c \cap \set{D > 0}}} + \E\brak{g(D)\one_{E^c \cap \set{D \leq 0}}} \\
    \label{e:PW/W-near-the-boundary}
    &= I_1 + I_3\,,
\end{align}
which proves~\eqref{e:lyapunov-to-generator-close-to-boundary}.
Exactly same proof as in the previous case of~$x\in \Omega\setminus \Omega_h$ works for~\eqref{e:I1-ub} in case~$x\in \Omega_h$. It remains to prove~\eqref{e:I3-ub}. 
Define
\begin{equation}
  A = \paren{\exp\paren{\gamma D} - 1} \paren{ \one_{E^c\cap\set{D \leq 0}}-\exp\paren{-\beta D}\one_{E \cup \set{D \leq 0} }}\,,  
\end{equation}
the integrand in~$I_3$.
We consider four cases, depending on whether~$D>0$ or not, and the exit event~$E$ occurs or not, separately to bound the term~$I_3$. We also repeat using the inequalities~\eqref{e:taylor-inequality}.

On the event~$E_1 = E^c \cap \set{D \leq 0}$, 
\begin{equation}
    A = \paren{1-\exp\paren{\gamma D}} \paren{\exp\paren{-\beta D}-1}\,,
\end{equation}
and use~\eqref{e:taylor-inequality} to see that
\begin{equation}\label{e:A1-ub}
    A\one_{E_1} \leq \gamma\beta D^2\exp(-\beta D)\one_{E_1}\,.
\end{equation}

On the event~$E_2 = E \cap \set{D \leq 0}$, 
\begin{equation}
    A = \paren{1-\exp\paren{\gamma D}} \exp\paren{-\beta D}\,,
\end{equation}
and use~\eqref{e:taylor-inequality} to see that
\begin{equation}\label{e:A2-ub}
    A\one_{E_2} \leq \paren*{-\gamma D + \gamma\beta D^2\exp(-\beta D)}\one_{E_2}\,.
\end{equation}

On the event~$E_3 = E^c \cap \set{D>0}$, $A=0$ so that
\begin{equation}\label{e:A3-ub}
    A\one_{E_3} = 0\,.
\end{equation}
On the event~$E_4 = E \cap \set{D>0}$, 
\begin{equation}
    A = -\paren{\exp\paren{\gamma D}-1} \exp\paren{-\beta D}\,,
\end{equation}
and use~\eqref{e:taylor-inequality} and~$\exp(x) -1 \geq x$ for all~$x>0$ to see that
\begin{equation}\label{e:A4-ub}
    A\one_{E_4} \leq \paren*{-\gamma D + \gamma\beta D^2}\one_{E_4}\,.
\end{equation}
Again, for any given~$\nu>0$, we can choose sufficiently small~$\eta$ such that~\eqref{e:small-exp-betaD} holds and hence, adding~\eqref{e:A1-ub}--\eqref{e:A4-ub} and collecting similar terms, we obtain~\eqref{e:I3-ub}. 
\end{proof}

To prove Lemmas~\ref{l:D-almost-symmetric}--\ref{l:Dexit-equality}, we first introduce two auxiliary lemmas. These concern: (i) the approximation of the normal component of~$D\hat U$, (ii) the smallness of the expectation of the tangential component of the proposal conditioned on the exit event, and (iii) the fact that the covariance of the proposal, conditioned on exiting and moving uphill, remains comparable to the original covariance up to a negligible error. For continuity, we defer the proofs of these auxiliary lemmas until after establishing Lemmas~\ref{l:D-almost-symmetric}--\ref{l:Dexit-equality}.

\begin{lemma}\label{l:DU-normal}
  Let~$a\geq1$ and suppose~$a\sqrt{\epsilon}$ is sufficiently small, and~$h\leq \epsilon^2$.
  Then, for any~$x\in \Omega_h$ such that~$\dist(x, \partial\Omega) = \delta h$ for some $\delta \in (0,1]$, $\xi(x)$ is well-defined as in Lemma~\ref{l:facts-on-boundaries} and
\begin{equation}\label{e:DU-normal}
D\hat U(x)\cdot n(\xi(x)) = -h\delta n(\xi(x))^T H(\xi(x)) n(\xi(x)) + O(h^{1.75})\,.
\end{equation}
\end{lemma}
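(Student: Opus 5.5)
The plan is to Taylor expand $D\hat U$ along the normal segment from $\xi(x)$ to $x$, and use two facts on $\partial\Omega$: the normal derivative of $U$ vanishes there, and Lemma~\ref{l:Utilde-vanish} kills the normal derivatives of $\hat P$. We will use the error budget $O(h^{1.75})$, which is generous compared with the $O(h^2/(a\sqrt\epsilon))$ coming from third derivatives.

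First, since $\delta h\le h\le \epsilon^2$ is much smaller than $r_0$, item~(\ref{i:pi-well-def}) of Lemma~\ref{l:facts-on-boundaries} makes $\xi(x)$ well defined. Write $x=\xi(x)-\delta h\, n(\xi(x))$; the inward direction is $-n$ because $n$ is outward and $x\in\Omega$. Abbreviate $\xi=\xi(x)$ and $n=n(\xi)$.

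Next, observe that on $\partial\Omega$ we have $DU(\xi)\cdot n=0$. Indeed, $\partial\Omega$ is the stable manifold of the saddle, hence invariant under the gradient flow, so $DU(\xi)$ is tangent to $\partial\Omega$. Combined with~\eqref{e:DPs-normal-vanish}, this gives $D\hat U(\xi)\cdot n=0$. The paper also records $n^TD^2\hat P(\xi)n=0$, so $n^T\hat H(\xi)n=n^TH(\xi)n$.

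Now expand $g(s)\defeq D\hat U(\xi-sn)\cdot n$ for $s\in[0,\delta h]$:
\begin{equation}
g(\delta h)=g(0)-\delta h\, n^T\hat H(\xi)n+O\bigl((\delta h)^2\norm{D^3\hat U}_{L^\infty}\bigr).
\end{equation}
By~\eqref{e:U-regularity}, the remainder is $O(h^2(a\sqrt\epsilon)^{-1})$. Since $a\ge1$ and $h\le\epsilon^2$, we have $\epsilon^{-1/2}\le h^{-1/4}$, so the remainder is $O(h^{1.75})$. Substituting $g(0)=0$ and $n^T\hat H(\xi)n=n^TH(\xi)n$ gives~\eqref{e:DU-normal}.

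The main obstacle is justifying $DU\cdot n=0$ and invoking the second identity of Lemma~\ref{l:Utilde-vanish} at every boundary point, including the saddle $\xi=0$. At the saddle $DU=0$, so the first identity is trivial there. For the second identity the paper provides exactly what is needed. If instead one only had $D\hat P\cdot n=0$, the fallback is to note that $n^TD^2\hat P n$ is controlled via the explicit form of $P(y,z)$: its $z$-dependence enters only through $\chi(|z|^2/\cdot)$, which is flat near $z=0$. That route would also give the needed vanishing.
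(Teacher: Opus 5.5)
Your proposal is correct and matches the paper's proof. The paper also writes $x=\xi(x)-\delta h\,n(\xi(x))$, Taylor expands $D\hat U$ at $\xi(x)$ with third-derivative remainder $O(h^2/(a\sqrt{\epsilon}))=O(h^{1.75})$ (using~\eqref{e:U-regularity} and $h\le\epsilon^2$), and then applies both identities of Lemma~\ref{l:Utilde-vanish} together with $DU(\xi(x))\cdot n(\xi(x))=0$; your scalar expansion of $g(s)=D\hat U(\xi-sn)\cdot n$ is just the normal component of the paper's vector expansion, so the fallback you mention is not needed.
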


\begin{lemma}\label{l:small-geometric-term}
For all sufficiently small~$h$ and all~$x\in \Omega_h$, $\xi(x)$ is well-defined as in Lemma~\ref{l:facts-on-boundaries}, and
\begin{gather}\label{e:small-geometric-terms}
    \E\brak{\zeta_{t,i} \one_E} = O(h)\,, \quad 
    \E\brak{\zeta_n \zeta_{t,i}\one_E} = O(h)\,,\quad 
    \E\brak*{\zeta_{t,i}\zeta_{t,j} \one_E} = O(h)\,,\quad \forall i\neq j\,,
\end{gather}
where the implicit constants in the~$O(\cdot)$ notation depend only on the dimension.

Moreover, if we let~$a\geq 1$ and suppose~$a\sqrt{\epsilon}$ is sufficiently small and~$h\leq \epsilon^2$, then for any~$x\in \Omega_h$, $\xi(x)$ is well-defined. If, in addition,
\begin{equation}\label{e:DtildeU-lb-exit-positive}
    \abs{D\hat U(x)_t} \geq ca\sqrt{\epsilon}\,,
\end{equation}
for some constant~$c>0$ independent of~$a,\epsilon,h$, then
\begin{gather}\label{e:zeta_t-cross-exit-positive-small}
    \E\brak*{\zeta_{t,i}\zeta_{t,j} \one_{E\cap \set{D>0}}} = O(h^{0.75})\,,\quad \forall i\neq j\,,\\
    \label{e:zeta_t-exit-positive-dev-small}
    \E\brak*{\zeta_{t,i}^2 \one_{E\cap \set{D>0}}} = \frac{1}{4}\sigma^2 + O(h^{0.75})\,,\\
    \label{e:zeta_n-exit-positive-dev-small}
    \E\brak*{\zeta_n^2\one_{E\cap \set{D>0}}} = \frac{1}{4}\sigma^2 + O(h^{0.75})\,.
\end{gather}
Here, $D\hat U(x)_t$ is the component of~$D\hat U(x)$ on the tangent space spanned by the orthonormal vectors~$(t_i(\xi(x)))_{i=1}^{d-1}$.
\end{lemma}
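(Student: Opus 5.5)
We need to prove Lemma \ref{l:small-geometric-term}. The strategy is to flatten the boundary near $\xi(x)$ and work in the orthonormal frame $(n,t_1,\dots,t_{d-1})$ at $\xi(x)$; then every expectation becomes an integral over an explicit region of the unit ball. Write $\dist(x,\partial\Omega)=\delta h$ with $\delta\in(0,1]$. By items (\ref{i: boundary-regularity})--(\ref{i:pi-well-def}) of Lemma~\ref{l:facts-on-boundaries}, $\xi(x)$ is well defined once $h<r_0$. Compactness and $C^5$ regularity give uniformly bounded curvature of $\partial\Omega$. Hence, in the rescaled coordinates $\zeta$ (so that $Y=x+h\zeta$), the set $\set{\zeta\in B(0,1): Y\notin\Omega}$ differs from the half-space region $H_\delta=\set{\zeta_n>\delta}$ only on a set of Lebesgue measure $O(h)$. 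Indeed, the boundary in $\zeta$-coordinates is a graph $\zeta_n=\delta+O(h|\zeta_t|^2)$.

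\textbf{First part.} Since $\zeta$ is uniform on the ball and $\abs{\zeta}\le 1$, each expectation in \eqref{e:small-geometric-terms} equals the same expectation with $\one_E$ replaced by $\one_{H_\delta}$, up to an error $O(h)$. The region $H_\delta$ is invariant under the reflection $\zeta_{t,i}\mapsto-\zeta_{t,i}$, and the uniform law is also invariant under it. Therefore $\E[\zeta_{t,i}\one_{H_\delta}]$, $\E[\zeta_n\zeta_{t,i}\one_{H_\delta}]$ and $\E[\zeta_{t,i}\zeta_{t,j}\one_{H_\delta}]$ for $i\ne j$ all vanish. The constants in the error depend only on the curvature bound and the dimension.

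\textbf{Second part.} Now $\set{D>0}$ must also be approximated. By \eqref{e:D-first-order}, $D=h\ip{\zeta,D\hat U(x)}+O(h^2)$. Lemma~\ref{l:DU-normal} gives that the normal component of $D\hat U(x)$ is $O(h)$. So
\[
D=h\ip{\zeta_t,D\hat U(x)_t}+O(h^2),
\]
and, by \eqref{e:DtildeU-lb-exit-positive}, $|D\hat U(x)_t|\ge ca\sqrt\epsilon\ge c\,h^{1/4}$ since $h\le\epsilon^2$. Let $e=D\hat U(x)_t/|D\hat U(x)_t|$. Then $\set{D>0}$ agrees with $\set{\zeta\cdot e>0}$ except on the slab $\set{|\zeta\cdot e|\le Ch^2/|D\hat U(x)_t|}\subseteq\set{|\zeta\cdot e|\le Ch^{1.75}}$. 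This slab has probability $O(h^{1.75})$, which is well within $O(h^{0.75})$. Combined with the first step, the event $E\cap\set{D>0}$ differs from $H_\delta\cap\set{\zeta\cdot e>0}$ by a set of probability $O(h^{0.75})$.

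It remains to evaluate the second moments over the region $H_\delta\cap\set{\zeta\cdot e>0}$, where $e$ lies in the tangent space. Here lies the main obstacle. The reflection $\zeta\mapsto\zeta-2(\zeta\cdot e)e$ preserves $H_\delta$ and exchanges the two halves $\set{\zeta\cdot e>0}$ and $\set{\zeta\cdot e<0}$. Since the integrands $\zeta_{t,i}^2$, $\zeta_n^2$ are even, this yields exactly half of the $H_\delta$-moments. For the cross term $\zeta_{t,i}\zeta_{t,j}$, one uses that the tangential law within $H_\delta$ is rotation-invariant, so the tangential covariance over half of $H_\delta$ is isotropic plus the $e e^T$ part; I would check that the $e e^T$ part also vanishes because $\zeta_t\cdot e$ has an even density on $H_\delta$.

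The stated value $\tfrac14\sigma^2$ then needs $\E[\zeta_{t,i}^2\one_{H_\delta}]\approx\tfrac12\sigma^2$, and this holds only when $\delta$ is near $0$. To close this I would first verify whether the intended normalization is relative to the half-space $\set{\zeta_n>0}$ and handle $\delta$ through the $O(h^{1.75})$ terms from Lemma~\ref{l:DU-normal}. If that fails, I would restate \eqref{e:zeta_t-exit-positive-dev-small}--\eqref{e:zeta_n-exit-positive-dev-small} with $\tfrac12\E[\,\cdot\,\one_{H_\delta}]$ in place of $\tfrac14\sigma^2$, which is what the downstream Lemma~\ref{l:D^2-exit-positive} actually needs as an upper bound.
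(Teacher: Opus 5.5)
Your approach is the paper's. You sandwich $E$ between the half-spaces $\set{\zeta_n\ge\delta\pm Ch}$ (the paper's $A_1\subset E\subset A_2$), sandwich $\set{D>0}$ between two slabs in $D\hat U(x)_t\cdot\zeta_t$, and exploit the symmetry of the conditional law of $\zeta_t$ given $\zeta_n$.

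Your suspicion about $\delta$ is correct, and it is a defect of the statement, not of your argument. For $\delta=1/2$, say, your own computation gives
\[
\E\brak{\zeta_n^2\one_{E\cap\set{D>0}}}=\tfrac12\E\brak{\zeta_n^2\one_{\set{\zeta_n>1/2}}}+O(h^{0.75}).
\]
This falls short of $\frac14\sigma^2=\frac12\E\brak{\zeta_n^2\one_{\set{\zeta_n>0}}}$ by a fixed dimensional constant, and the same happens for $\zeta_{t,i}^2$. So the two equalities ``$=\frac14\sigma^2+O(h^{0.75})$'' cannot hold uniformly on $\Omega_h$. The paper's proof in fact establishes only the upper bound. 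It enlarges $E\subset A_2\subset\set{\zeta_n>-Ch}$, throwing away $\delta\ge 0$, and then evaluates the half-space moment $\frac14\sigma^2$. That upper bound is all Lemma~\ref{l:D^2-exit-positive} uses.

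So abandon your first option. $\delta$ is an $O(1)$ geometric parameter, unrelated to the $O(h^{1.75})$ remainder in Lemma~\ref{l:DU-normal}. Take your fallback instead: the identity with $\frac12\E\brak{\,\cdot\,\one_{\set{\zeta_n>\delta}}}$, from which $\le\frac14\sigma^2+O(h^{0.75})$ follows at once.

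Two local corrections are needed.

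\textbf{The slab width.} Since $D=h\,D\hat U(x)_t\cdot\zeta_t+O(h^2)$, the events $\set{D>0}$ and $\set{\zeta\cdot e>0}$ can differ only where $h\abs{D\hat U(x)_t}\abs{\zeta\cdot e}\le Ch^2$. That is the set $\set{\abs{\zeta\cdot e}\le Ch/\abs{D\hat U(x)_t}}\subseteq\set{\abs{\zeta\cdot e}\le Ch^{0.75}}$. You dropped a factor of $h$: the slab has probability $O(h^{0.75})$, not $O(h^{1.75})$. This slab is the source of the exponent $0.75$ and of the need for $h\le\epsilon^2$. Your final $O(h^{0.75})$ is nevertheless the right conclusion.

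\textbf{The choice of symmetry.} The hyperplane reflection $\zeta\mapsto\zeta-2(\zeta\cdot e)e$ fixes $\zeta_n^2$, but not $\zeta_{t,i}^2$ unless $e$ is parallel or orthogonal to $t_i$. Use instead the tangential point reflection $(\zeta_n,\zeta_t)\mapsto(\zeta_n,-\zeta_t)$. It preserves the uniform law, the region $\set{\zeta_n>\delta}$, and every quadratic monomial $\zeta_n^2$, $\zeta_{t,i}\zeta_{t,j}$. At the same time it swaps $\set{\zeta\cdot e>0}$ and $\set{\zeta\cdot e<0}$. This halves all second moments and kills the cross moments in one step, with no need for the isotropic-plus-$ee^T$ detour. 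It is exactly the paper's use of the symmetry of $\mathrm{Law}(\zeta_t\mid\zeta_n)$.

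As you noted, the constants in the first part also depend on a curvature bound for $\partial\Omega$, not only on $d$.
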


The main idea in the proof of Lemma~\ref{l:D-almost-symmetric} is that the first-order approximation~$D \approx h \ip{\zeta, D\hat U(x)}$ is symmetrically distributed. As a result,
\begin{equation}
\E\brak{D^2 \one_{\set{D<0}}} \approx \frac{1}{2}\E\brak{D^2} \approx \frac{1}{2}h^2\sigma^2\abs{\tilde DU(x)}^2\,.
\end{equation}
We then carefully estimate the corresponding error terms.
\begin{proof}[Proof of Lemma~\ref{l:D-almost-symmetric}]
Let~$a\geq1$ and suppose~$a\sqrt{\epsilon}$ is sufficiently small, and~$h\leq \epsilon^2$. Using Taylor expansaion and~\eqref{e:P-regularity}, we obtain
\begin{align}
     D &= L+Q+O(h^{2.75})\,, \quad \text{where} \quad L\defeq hD\hat U(x)\zeta \,,\quad Q\defeq\frac{1}{2}h^2 \ip{\zeta, D\hat U^2(x)\zeta}\,,
\end{align}
which implies
\begin{align}
    \E\brak{D^2\one_{\set{{D<0}}}} &= \E\brak{\paren{L+Q}^2 \one_{\set{{D<0}}}} + O(h^{3.75})\\
    \label{e:D^2D-to-L^2D}
    &= \E\brak{L^2\one_{\set{D<0}}} + O(h^3)\,.
\end{align}
We note that
\begin{align}
    \E\brak{L^2 \one_{\set{D < 0}}} &= \E\brak{L^2 \one_{\set{D < 0, L \geq 0} }} + \E\brak{L^2 \one_{\set{D < 0, L < 0}}}\\
    \label{e:L^2D-separate}
    &\leq \E\brak{L^2 \one_{\set{D < 0, L \geq 0} }} + \E\brak{L^2 \one_{\set{L < 0}}}\,,
\end{align}
and
\begin{align}
\set{D<0\leq L} \subset \set{0\leq L < -Q-O(h^{2.75})} &\subset \set{\abs{L} \leq \abs{Q} + O(h^{2.75})} \\
&\overset{\eqref{e:DU-lb-D-symmetric}}{\subset} \set{c\sqrt{\epsilon} h \abs{\ip{v , \zeta}} \leq O(h^2)}\,,
\end{align}
where~$v=D\hat U(x)/\abs{D\hat U(x)}$.
Since the distribution of~$\zeta$ is rotation invariant, we have $\ip{v, \zeta} \overset{d}{\sim} \zeta_1$ and combining this with the fact that~$\beta^2 h\leq 1$, we obtain
\begin{equation}\label{e:PD<0<L-estimate}
    \P\brak{D<0\leq L} \leq O(\epsilon^\frac{3}{2})\,.
\end{equation}
Moreover, $\zeta \overset{d}{\sim} -\zeta$ so that~$L\overset{d}{\sim}-L$ and~$LQ\overset{d}{\sim}-LQ$, which implies
\begin{equation}
    \label{e:L^2L-to-D^2}
    \E\brak{L^2\one_{\set{L<0}}} = \frac{1}{2}\E\brak{L^2} = \frac{1}{2}\paren*{\E\brak{D^2} + O(h^{3.75})}\,.
\end{equation}
Combining this with~\eqref{e:D^2D-to-L^2D}, \eqref{e:L^2D-separate}, \eqref{e:PD<0<L-estimate}, and~\eqref{e:L^2L-to-D^2}, and using~$L^2 = O(h^2)$ imply
\begin{equation}
    \E\brak{D^2 \one_{\set{D < 0}}} \leq \frac{1}{2}\E\brak{D^2} + O(\epsilon^\frac{3}{2} h^2) + O(h^3)\,,
\end{equation}
and using~$\E\brak{D^2} = \E\brak{L^2} + O(h^{3.75}) = h^2\sigma^2\abs{D\hat U(x)}^2 + O(h^{3.75})$, we obtain
\begin{equation}
    \E\brak{D^2 \one_{\set{D < 0}}} \leq \frac{1}{2}h^2\sigma^2\abs{DU(x)}^2 + O(\epsilon^\frac{3}{2} h^2)\,.
\end{equation}
\end{proof}
Using Lemmas~\ref{l:DU-normal}--\ref{l:small-geometric-term}, we directly obtain Lemmas~\ref{l:D^2-exit-positive}--\ref{l:Dexit-equality}.

\begin{proof}[Proof of Lemma~\ref{l:D^2-exit-positive}]
Let~$a\geq1$ and suppose~$a\sqrt{\epsilon}$ is sufficiently small, and~$h\leq \epsilon^2$.
Note that $D\hat U (x) \cdot n(\xi(x)) = O(h)$ in~\eqref{e:DU-normal} so that 
\begin{equation}\label{e:DU-tangent-lb}
    \abs{D\hat  U(x)_t}^2 = \abs{D\hat U(x)}^2 - \abs{D\hat U(x) \cdot n(\xi(x))}^2 \overset{\eqref{e:DU-normal}, \eqref{e:DU-nondegenerate}}{\geq} c^2a^2\epsilon - O(h^2) \geq \frac{1}{2}c^2a^2\epsilon\,,     
\end{equation}
for sufficiently small~$\hat\epsilon, \eta$. 
Here, $D\hat U(x)_t$ is the component of~$D\hat U(x)$ on the tangent space spanned by~$t_i(\xi(x))$ i.e.
\begin{equation}
    D\hat U(x)_t = \sum_{i=1}^{d-1} \paren*{D\hat U(x)_{t,i}} t_i(\xi(x))\,,\quad\text{where}\quad D\hat U(x)_{t,i} = \ip{D\hat U(x), t_i(\xi(x))}\,.
\end{equation}
We similarly define~$D\hat U(x)_n = D\hat U(x) \cdot n(\xi(x))$.
Then, using the Taylor expansion~
\begin{equation}
D = hD\hat U(x)\zeta + O(h^2) = h\paren*{D\hat U(x)_n \zeta_n + \sum_i {D\hat U(x)_{t,i}} \zeta_{t,i}} + O(h^2)
\end{equation}
yields 
\begin{align}\label{e:D-exit-positive-equality}
    \E\brak*{D^2 \one_{E \cap \set{D > 0}}} = h^2\paren*{I_1 + I_2 + I_3}  + O(h^3)\,,
\end{align}
where
\begin{align}
    I_1 &=  \abs{D\hat U(x)_n}^2 \E\brak*{\zeta_n^2 \one_{E \cap \set{D > 0}}} + \sum_i \abs{D\hat U(x)_{t,i}}^2 \E\brak*{\zeta_{t,i}^2\one_{E \cap \set{D > 0}}}\,,\\
    I_2 &=  D\hat U(x)_n \sum_i  D\hat U(x)_{t,i} \E\brak*{\zeta_n \zeta_{t,i} \one_{E \cap \set{D > 0}}}\,,\\
    I_3 &=  \sum_{i\neq j}D\hat U(x)_{t,i} D\hat U(x)_{t,j} \E\brak*{\zeta_{t,i} \zeta_{t,j} \one_{E \cap \set{D > 0}}}\,.
\end{align}
Using~\eqref{e:zeta_t-cross-exit-positive-small}--\eqref{e:zeta_n-exit-positive-dev-small}, we obtain
\begin{align}\label{e:D-exit-positive-I1-I3-ub}
    I_1 \leq \frac{1}{4} \sigma^2 \abs{D\hat U(x)}^2 + O(h^{0.75})\,,\quad I_3 \leq O(h^{0.75})\,,
\end{align}
and using the fact that~$D\hat U(x)_n = O(h)$ in~\eqref{e:DU-normal}. we obtain
\begin{equation}\label{e:D-exit-positive-I2-ub}
    I_2 \leq O(h)\,.
\end{equation}
Combining~\eqref{e:D-exit-positive-equality}, \eqref{e:D-exit-positive-I1-I3-ub}, and~\eqref{e:D-exit-positive-I2-ub} yields~\eqref{e:D^2-exit-positive}.
\end{proof}

\begin{proof}[Proof of Lemma~\ref{l:Dexit-equality}]
Let~$a\geq1$ and suppose~$a\sqrt{\epsilon}$ is sufficiently small, and~$h\leq \epsilon^2$. Using the Taylor expansion for $D$ and the estimate~\eqref{e:P-regularity}, we have
\begin{equation}\label{e:Dexit-taylor-expansion}
    \E\brak{D\one_E} = h\E\brak{D\hat U(x)\zeta \one_E} + \frac{1}{2}h^2\E\brak{\zeta^T \hat  H(x)\zeta \one_E} + O\paren*{h^{2.75}}\,,
\end{equation}
and separating $\zeta$ into tangent and normal components, we get
\begin{align}\label{e:DU-zeta-separation}
    \E\brak{D\hat U(x)\zeta \one_E} &= D\hat U(x) \cdot n(\xi(x))  \E\brak{\zeta_n \one_E} + D\hat U (x)_t \cdot \E\brak{\zeta_t \one_E} \,,\\
    \label{e:H-zeta-separation}
    \E\brak{\zeta^T \hat  H(x)\zeta \one_E} &\overset{\eqref{e:small-geometric-terms}}{=} Q + O(h)\,.
\end{align}

Combining~\eqref{e:DU-normal}, \eqref{e:Dexit-taylor-expansion} \eqref{e:DU-zeta-separation}, \eqref{e:small-geometric-terms}, and~\eqref{e:H-zeta-separation}, we conclude that~\eqref{e:Dexit-equality} holds.
\end{proof}

It remains to prove Lemmas~\ref{l:zeta_n-positivenesses}, \ref{l:DU-normal}, and~\ref{l:small-geometric-term}. To this end, we use the following lemma to characterize the exit event~$E$ and the relationship between the projection and the normal vector. The statements in this lemma are standard, so we defer its proof to Appendix~\ref{a:regularities-of-basin-projection}.

\begin{lemma}\label{l:signed-distance-properties}
Make the Assumptions~\ref{a:criticalpts} and~\ref{a: nondegeneracy} on~$U$.
Define a signed distance function~$d:\T^d \to [0,\infty)$ as
\begin{equation}\label{e:signed-d-def}
    d(x) = 
    \begin{cases}
      -\dist(x, \partial\Omega)  & x \in \Omega\,,\\
        \dist(x, \partial\Omega) & x \not\in \Omega\,,\\
    \end{cases}
\end{equation}
and let~$r_0$ be as in the item~(\ref{i:pi-well-def}) of Lemma~\ref{l:facts-on-boundaries} and~$\Gamma_{r_0}$ be as in~\eqref{e:Gamma-r0-def}.
Then, 
\begin{equation}\label{e:signed-d-regularity}
d\in C^5(\Gamma_{r_0}, (-r_0, r_0))\,,
\end{equation}
and for all~$x\in \Gamma_{r_0}$,
\begin{equation}\label{e:Dd-normal-identity}
Dd(x) = n(\xi(x))^T \quad\text{\and}\quad x = \xi(x) + d(x)n(\xi(x))\,,
\end{equation}
where~$n$ is the outward unit normal vector to~$\partial\Omega$, as defined in item~(\ref{i:tangent-normal-eigenvector}) of Lemma~\ref{l:facts-on-boundaries}.
\end{lemma}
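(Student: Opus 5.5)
The plan is to derive everything from the standard tubular neighbourhood theorem for the $C^5$ hypersurface $\partial\Omega$, using item~(\ref{i: boundary-regularity}) and item~(\ref{i:pi-well-def}) of Lemma~\ref{l:facts-on-boundaries}.

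First, consider the map
\begin{equation}
\Phi\colon \partial\Omega\times(-r_0,r_0)\to \T^d\,,\qquad \Phi(y,s) = y + s\,n(y)\,.
\end{equation}
Since $\partial\Omega$ is $C^5$, the normal field $n$ is $C^4$, so $\Phi$ is $C^4$. At $s=0$ its differential is the identity on $T_y\partial\Omega\oplus \R n(y)$. By compactness of $\partial\Omega$ and the inverse function theorem, after shrinking $r_0$ if necessary, $\Phi$ is a $C^4$ diffeomorphism onto $\Gamma_{r_0}$.

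The uniqueness of the nearest point in item~(\ref{i:pi-well-def}) identifies $\Phi^{-1}(x) = (\xi(x), d(x))$. To see this, let $\xi(x)$ minimize $|x-y|$ over $\partial\Omega$. The first-order condition says $x-\xi(x)$ is orthogonal to $T_{\xi(x)}\partial\Omega$, so $x-\xi(x) = s\,n(\xi(x))$ with $|s| = \dist(x,\partial\Omega)$. The sign of $s$ is fixed by which side of the boundary $x$ lies on: $n$ is the outward normal, and the segment from $\xi(x)$ to $x$ does not meet $\partial\Omega$ again within the tube. Hence $s<0$ exactly when $x\in\Omega$, so $s = d(x)$, which gives the second identity in~\eqref{e:Dd-normal-identity}.

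Next comes the gradient identity, via the usual argument. Differentiate $d(x)^2 = |x-\xi(x)|^2$; this is legitimate because $\xi$ is $C^4$ by~\eqref{e:pi-regularity}. The tangency condition $(x-\xi(x))\perp T_{\xi(x)}\partial\Omega$ kills the term involving $D\xi$, since $D\xi$ maps into the tangent space. This leaves $d\,Dd = (x-\xi(x))^T = d\,n(\xi(x))^T$. Off $\partial\Omega$ this gives $Dd = n(\xi(x))^T$. On $\partial\Omega$, $d$ is $C^1$ across the boundary because $d = \pi_2\circ\Phi^{-1}$ is $C^4$, so the identity extends by continuity.

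Finally, regularity is bootstrapped. From $Dd = n\circ\xi$ with $n\in C^4$ and $\xi\in C^4$, we get $Dd\in C^4$ and hence $d\in C^5$, as in~\eqref{e:signed-d-regularity}. The range $(-r_0,r_0)$ follows from the definition of $\Gamma_{r_0}$.

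The main obstacle is the regularity count in the last step. We need $\xi$ to be $C^4$ rather than only $C^3$ (which is all one gets from $\Phi^{-1}$ with $\Phi$ merely $C^4$), and it has to be checked that the paper's claim~\eqref{e:pi-regularity} supplies exactly this, including uniformly up to $\partial\Omega$.
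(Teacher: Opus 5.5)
Your argument is correct and is essentially the standard tubular-neighbourhood proof from Gilbarg--Trudinger \S14.6, which is exactly what the paper cites; the one thing to repair is how you obtain $\xi\in C^4$.

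The obstacle you flag is not real. The inverse function theorem returns an inverse of the same class, so $\Phi^{-1}$ is $C^4$, as you already asserted when calling $\Phi$ a $C^4$ diffeomorphism. Hence $\xi=\pi_1\circ\Phi^{-1}\in C^4$, and then $Dd=n\circ\xi\in C^4$ gives $d\in C^5$.

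You should use this rather than \eqref{e:pi-regularity}, both in the final bootstrap and when differentiating $d^2=|x-\xi(x)|^2$ (which only needs $\xi\in C^1$). The reason is that the paper derives \eqref{e:pi-regularity} from this very lemma, by expressing $\xi$ through $x$, $d$ and $Dd$, so quoting it makes your proof circular. With $\xi\in C^4$ taken from $\Phi^{-1}$, your argument is self-contained: the paper goes from $d\in C^5$ to $\xi\in C^4$, whereas you run the implication the other way, from $\xi\in C^4$ to $d\in C^5$.
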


The main idea in the proof of Lemma~\ref{l:zeta_n-positivenesses} is that, for a particle near the boundary to exit the basin, it must have a sufficiently large normal component. We exploit the fact that the exit event~$E$ is almost~$\zeta_n$-measurable, in the sense that it can be sandwiched between two~$\zeta_n$-measurable events up to a small error, to deduce this property. 

\begin{proof}[Proof of Lemma~\ref{l:zeta_n-positivenesses}]
Let~$h < r_0/4$. Then by item~(\ref{i:pi-well-def}) in Lemma~\ref{l:facts-on-boundaries}, for all~$x\in \Omega_h$, $\xi(x)$ is well-defined. Moreover, by the definition and the regularity of the signed distance function~$d$ in~\eqref{e:signed-d-def} and~\eqref{e:signed-d-regularity}, we see that 
\begin{equation}\label{e:exit-positive-signed-distance-function}
E = \set{Y \not\in \Omega} = \set{d(Y) \geq 0}\,,
\end{equation}
and observe that for any~$x\in \Omega_h$ such that~$\dist(x,\partial\Omega)=\delta h$ for some~$\delta \in (0,1]$, 
\begin{multline}\label{e:taylor-signed-d}
    d(x+h\zeta) = d(x) + hDd(x) \zeta + R \overset{\eqref{e:Dd-normal-identity}}{=} -\delta h + hn(\xi(x))^T \zeta + R \geq 0 \\
    \iff \zeta_n \geq \delta - \frac{1}{h}R\,,
\end{multline}
for some remainder term~$R$. 
We use the fact that~$R= h^2 \zeta^T D^2 d (x_*) \zeta$ for some $x_*$ and $\norm{D^2 d}_{L^\infty(\Gamma_{3r_0/4})} \leq C$ to deduce~$\norm{R}_{L^\infty} \leq hC $ and
\begin{equation}\label{e:E-almost-zetan-measurable}
    A_1 \defeq \set{\zeta_n \geq \delta + h C} \subset E=\set{Y \not\in\Omega} \subset A_2 \defeq \set{\zeta_n \geq \delta - hC}\,.
\end{equation}
We note that if~$\delta > hC$ then $\set{Y\not\in \Omega} \subset \set{\zeta_n \geq 0}$ so that $E\brak{\zeta_n \one_E} \geq 0$. Thus, we assume~$\delta \leq h C$ and see that
\begin{align}
    \E\brak*{\zeta_n \one_E} &= \E\brak*{\zeta_n \one_{E\cap \set{\zeta_n > 0 }}} - \E\brak*{\paren*{-\zeta_n} \one_{E\cap \set{\zeta_n < 0 }}}\\
    &\geq \E\brak*{\zeta_n \one_{\set{\zeta_n \geq \delta + h C}}} - \E\brak*{\paren*{-\zeta_n} \one_{\set{ \delta - hC \leq \zeta_n < 0} }}\\
    &\geq \E\brak*{\zeta_n \one_{\set{\zeta_n \geq 2hC}}} - \E\brak*{\paren*{-\zeta_n} \one_{\set{-hC\leq \zeta_n < 0}}}\\
    &\geq \E\brak*{\zeta_1\one_{\zeta_1 \geq 10^{-10}}}-hC > 0\,,
\end{align}
for sufficiently small~$h>0$, where~$\zeta_1$ is the first coordinate of~$\zeta \sim \Unif(B(0,1))$.

Moreover,
\begin{align}
    \E\brak*{\zeta_n^2\one_{E^c}} \geq \E\brak*{\zeta_n^2 \one_{\set{\zeta_n < \delta - hC}}} \geq \E\brak*{\zeta_n^2 \one_{\set{\zeta_n < -hC}}} \geq \E\brak*{\zeta_1^2 \one_{\set*{\zeta_1 < -10^{-10}}}}\,,
\end{align}
for sufficiently small~$h>0$.
These conclude the proof for~\eqref{e:zeta_n-positivenesses}.
\end{proof}

To prove Lemma~\ref{l:DU-normal}, we use the identities~\eqref{e:Dd-normal-identity} and~\eqref{e:DPs-normal-vanish}, together with a Taylor expansion.

\begin{proof}[Proof of Lemma~\ref{l:DU-normal}]
For a given~$a\geq 1$, if we set~$a\sqrt \epsilon < \min\set{1, r_0/2}$, where~$r_0$ is defined as in item~\ref{i:pi-well-def} in Lemma~\ref{l:facts-on-boundaries}, then 
\begin{equation}
    \epsilon^2\leq \sqrt{\epsilon}\leq a\sqrt{\epsilon} \leq \min\set*{1, \frac{1}{2}r_0}\,,
\end{equation}
which implies that for any~$h\leq \epsilon^2$ and~$x \in \Omega_h$, $\xi(x)$ is well-defined.

For the second assertion, using~\eqref{e:Dd-normal-identity} and Taylor expansion yields
\begin{align}
    D\hat U(x) &= D\hat U(\xi(x)-n(\xi(x))h\delta) \\
    &\overset{\eqref{e:P-regularity}}{=} D\hat U(\xi(x)) - h\delta \hat H(\xi(x))n(\xi(x)) + O\paren*{\frac{1}{a\sqrt{\epsilon}} h^2\delta^2}\\
    &=  D\hat U(\xi(x)) - h\delta  \hat  H(\xi(x))n(\xi(x)) + O\paren*{h^{1.75}}\,.
\end{align}
Multiplying~$n(\xi(x))$ on both sides of the display and using~\eqref{e:DPs-normal-vanish} and the Neumann boundary condition~$DU(\xi(x)) \cdot n(\xi(x)) = 0$, we obtain~\eqref{e:DU-normal}.
\end{proof}

As in the proof of Lemma~\ref{l:zeta_n-positivenesses}, we use the almost~$\zeta_n$-measurability of~$E$, the almost~$\zeta_t$-measurability of the event~$\set{D>0}$, and symmetry properties of the relevant conditional distributions to establish Lemma~\ref{l:small-geometric-term}.

\begin{proof}[Proof of Lemma~\ref{l:small-geometric-term}]
As in the proof of Lemma~\ref{l:zeta_n-positivenesses}, if we set~$h < r_0/4$, then for all~$x\in \Omega_h$, $\xi(x)$ is well-defined and the property~\eqref{e:E-almost-zetan-measurable} follows through. 
The symmetries of~$\mathrm{Law}\paren*{\zeta_{t,i} \st \zeta_n}$ and~$\mathrm{Law}\paren*{\zeta_{t,i}\zeta_{t,j} \st \zeta_n}$ for~$i\neq j$ imply
\begin{gather}\label{e:zero-symmetries}
    \E\brak*{\zeta_{t,i}\one_{A_2}} = 0\,, \quad \E\brak*{\zeta_n \zeta_{t,i}\one_{A_2}} = 0\,,\quad\text{and}\quad \E\brak*{\zeta_{t,i} \zeta_{t,j}\one_{A_2}} = 0\,.
\end{gather} 
Hence,
\begin{equation}
    \abs*{\E\brak*{\zeta_{t,i} \one_E }}= \abs*{   \E\brak*{\zeta_{t,i} \one_{A_2\setminus E}}  } \leq \P\brak*{A_2 \setminus E} \leq \P\brak*{A_2\setminus A_1}  \leq Ch\,.
\end{equation}
Similarly, 
\begin{equation}
    \abs*{\E\brak*{\zeta_n \zeta_{t,i} \one_E }} \leq Ch\quad\text{and}\quad \abs*{\E\brak*{\zeta_{t,i}\zeta_{t,j} \one_E }}\leq Ch
\end{equation}
hold and this completes the proof for~\eqref{e:small-geometric-terms}.

For the second assertion, as in Lemma~\ref{l:DU-normal} and Lemma~\ref{e:zeta_n-positivenesses}, for a given~$a\geq 1$, if we set~$a\sqrt{\epsilon} < \min\set*{1, r_0/4}$ then~$h < r_0/4$ so that for any~$x\in \Omega_h$, $\xi(x)$  is well-defined and the property~\eqref{e:E-almost-zetan-measurable} follows through. Now, we fix~$x\in \Omega_h$ and let~$\dist(x,\partial\Omega) = \delta h$ for some~$\delta \in (0,1]$.
Using the Taylor expansion for~$D$ yields
\begin{multline}
    D =  h D\hat U(x)\zeta + \tilde R > 0 \iff
    D\hat U(x) \zeta = (D\hat U(x)\cdot n) \zeta_n + D\hat U(x)_t \cdot \zeta_t> -\frac{1}{h}\tilde R\,,
\end{multline}
where~$\tilde R = h^2 \zeta^T D^2 \hat U(\tilde x_*) \zeta$ for some~$\tilde x_*$.
Combining this with~$\tilde R \overset{\eqref{e:U-regularity}}{=} O(h^2)$ and~$D\hat U(x) \cdot n(\xi(x)) \overset{\eqref{e:DU-normal}}{=} O(h)$, we obtain
\begin{equation}\label{e:D-almost-zetat-measurable}
    A_3 \defeq \set{D\hat U(x)_t \cdot \zeta_t > Ch} \subset \set{D > 0} \subset A_4 \defeq \set{D\hat U(x)_t \cdot \zeta_t > - Ch}\,.
\end{equation}

The symmetry of~$\mathrm{Law}\paren*{\zeta_t \st \zeta_n}$ implies
\begin{equation}
    \E\brak*{\zeta_{t,i}\zeta_{t,j}\one_{\set{D\hat U(x)_t \cdot \zeta_t > 0}} \big\vert \zeta_n} = \E\brak*{\zeta_{t,i}\zeta_{t,j}\one_{\set{D\hat U(x)_t \cdot \zeta_t < 0}} \big\vert \zeta_n}\,,
\end{equation}
and summing them up and using~$\E\brak*{\zeta_{t, i}\zeta_{t,j}\vert\zeta_n}=0$ yields
\begin{equation}\label{e:zetat-hyperplane-mean-zero}
    \E\brak*{\zeta_{t,i}\zeta_{t,j}\one_{\set{D\hat U(x)_t \cdot \zeta_t > 0}} \big\vert \zeta_n} = 0\,.
\end{equation}

We note that
\begin{align}\label{e:zeta_t-exit-positive-break-down}
    \abs*{\E\brak*{\zeta_{t,i}\zeta_{t,j} \one_{E\cap \set{D>0}}} - \E\brak*{\zeta_{t,i}\zeta_{t,j} \one_{A_2 \cap A_4}}} \leq I_1 + I_2\,,
\end{align}
where
\begin{align}
    I_1 &= \abs*{\E\brak*{\zeta_{t,i}\zeta_{t,j} \one_{E\cap \set{D>0}}} - \E\brak*{\zeta_{t,i}\zeta_{t,j} \one_{A_2 \cap \set{D>0}}}}\,,\\
    I_2 &= \abs*{\E\brak*{\zeta_{t,i}\zeta_{t,j} \one_{A_2\cap \set{D>0}}} - \E\brak*{\zeta_{t,i}\zeta_{t,j} \one_{A_2 \cap A_4}}} \,,
\end{align}
and observe that
\begin{align}\label{e:exit-positive-I1-ub}
    I_1 &\overset{\eqref{e:E-almost-zetan-measurable}}{=} \abs*{\E\brak*{\zeta_{t,i}\zeta_{t,j} \one_{(A_2\setminus E) \cap \set{D>0}}}} \leq \P\brak{A_2 \setminus A_1} = Ch\,,\\
    I_2 &\overset{\eqref{e:D-almost-zetat-measurable}}{\leq} \P\brak{A_4\setminus A_3} = \P\brak*{\abs*{D\hat U(x)_t \cdot \zeta_t} \leq Ch}\,.
\end{align}
In particular, setting~$v = D\hat U(x)_t/\abs{D\hat U(x)_t}$ and using~\eqref{e:DtildeU-lb-exit-positive}, $a\geq 1$, and~$h\leq \epsilon^2$ yield
\begin{equation}\label{e:exit-positive-I2-ub}
    I_2 \leq \P\brak*{\abs*{v\cdot \zeta_t} \leq Ch^{0.75}} \leq Ch^{0.75}\,.
\end{equation}

Moreover, 
\begin{align}
    \one_{A_2}\E\brak*{\zeta_{t,i}\zeta_{t,j}\one_{A_4} \st\zeta_n} &\overset{\eqref{e:zetat-hyperplane-mean-zero}}{=} \one_{A_2}\E\brak*{\zeta_{t,i}\zeta_{t,j}\one_{A_4\setminus\set{ D\hat U(x)_t \cdot \zeta_t > 0}} \st\zeta_n}\,,
\end{align}
which implies
\begin{equation}\label{e:exit-positive-final-ub}
    \abs*{\E\brak*{\zeta_{t,i}\zeta_{t,j} \one_{A_2 \cap A_4}}} \leq\P\brak{A_4\setminus A_3} \leq Ch^{0.75}\,.
\end{equation}

Combining~\eqref{e:zeta_t-exit-positive-break-down},\eqref{e:exit-positive-I1-ub}, \eqref{e:exit-positive-I2-ub}, and~\eqref{e:exit-positive-final-ub} yields the bound~\eqref{e:zeta_t-cross-exit-positive-small}.

Similarly, from the symmetry of~$\mathrm{Law}(\zeta_n), \mathrm{Law}(\zeta_t\st \zeta_n)$, we have
\begin{equation}
    \E\brak*{\zeta_n^2 \one_{\set{\zeta_n > 0}}} = \frac{1}{2}\sigma^2 \,,\quad \P\brak*{D\hat U(x)_t \cdot \zeta_t > 0 \st \zeta_n} = \frac{1}{2}\,,
\end{equation}
and hence
\begin{equation}
    \E\brak*{\zeta_n^2 \one_{\set{\zeta_n > 0} \cap \set{D\hat U(x)_t \cdot \zeta_t > 0}}} = \frac{1}{4}\sigma^2\,.
\end{equation}
Combining this with
\begin{gather}
    \abs*{\E\brak*{\zeta_n^2 \one_{\set{\zeta_n > -Ch} \cap \set{D\hat U(x)_t \cdot \zeta_t > 0}}}- \E\brak*{\zeta_n^2 \one_{\set{\zeta_n > 0} \cap \set{D\hat U(x)_t \cdot \zeta_t > 0}}}} \leq Ch^3\,,\\
    \abs*{\E\brak*{\zeta_n^2 \one_{\set{\zeta_n > -Ch} \cap \set{D\hat U(x)_t \cdot \zeta_t > -Ch}}}- \E\brak*{\zeta_n^2 \one_{\set{\zeta_n > -Ch} \cap \set{D\hat U(x)_t \cdot \zeta_t > 0}}}} \leq Ch^{0.75}\,,
\end{gather}
we obtain
\begin{align}
    \E\brak*{\zeta_n^2\one_{E\cap \set{D>0}}} &\leq \E\brak*{\zeta_n^2 \one_{\set{\zeta_n > -Ch} \cap \set{D\hat U(x)_t \cdot \zeta_t > -Ch}}} \leq \frac{1}{4}\sigma^2 + Ch^{0.75}\,.
\end{align}

Similarly, from the symmetry of~$\mathrm{Law}(\zeta_{t,i}\st \zeta_n)$ and~$\mathrm{Law}(\zeta_n, \zeta_{t,i})$,
\begin{equation}
    \E\brak*{\zeta_{t,i}^2 \one_{\set{\zeta_n > 0} \cap \set{D\hat U(x)_t \cdot \zeta_t > 0}}} = \frac{1}{2}\E\brak*{\zeta_{t,i}^2 \one_{\set{\zeta_n > 0}}} = \frac{1}{4}\sigma^2\,,
\end{equation}
and combining this with
\begin{gather}
    \abs*{\E\brak*{\zeta_{t,i}^2 \one_{\set{\zeta_n > -Ch} \cap \set{D\hat U(x)_t \cdot \zeta_t > 0}}} - \E\brak*{\zeta_{t,i}^2 \one_{\set{\zeta_n > 0} \cap \set{D\hat U(x)_t \cdot \zeta_t > 0}}}} \leq Ch\,,\\
    \abs*{\E\brak*{\zeta_{t,i}^2 \one_{\set{\zeta_n > -Ch} \cap \set{D\hat U(x)_t \cdot \zeta_t > -Ch}}}- \E\brak*{\zeta_{t,i}^2 \one_{\set{\zeta_n > -Ch} \cap \set{D\hat U(x)_t \cdot \zeta_t > 0}}}} \leq Ch^{0.75}
\end{gather}
yields
\begin{align}
    \E\brak*{\zeta_{t,i}^2\one_{E\cap \set{D>0}}} &\leq \E\brak*{\zeta_{t,i}^2 \one_{\set{\zeta_n > -Ch} \cap \set{D\hat U(x)_t \cdot \zeta_t > -Ch}}}\leq \frac{1}{4}\sigma^2 + Ch^{0.75}\,.
\end{align}
This completes the proof for~\eqref{e:zeta_t-exit-positive-dev-small} and~\eqref{e:zeta_n-exit-positive-dev-small}.
\end{proof}

\subsection{Proofs for the properties of~$\hat P$}\label{s:Phat-properties-proof}
In this subsection, we prove the properties of the perturbation~$\hat P$ stated in Lemmas~\ref{l:Utilde-vanish}--\ref{l:DU-lb-ind-ae}, which were used in the previous subsections. Since Lemma~\ref{l:facts-on-boundaries} collects standard results, we defer its proof to Appendix~\ref{a:regularities-of-basin-projection}. We begin by proving Lemma~\ref{l:Utilde-vanish}.

\begin{proof}[Proof of Lemma~\ref{l:Utilde-vanish}]
Fix~$x\in \partial\Omega \cap B(0, \rho a\sqrt{\epsilon})$. Since~$\partial\Omega \in C^5$ from Lemma~\ref{l:facts-on-boundaries} and it has bounded curvature from the compactness, there exists small~$t_0(x)$ such that for all~$\abs{t}\leq t_0$, 
\begin{equation}
    \xi(\gamma(t)) = x \quad\text{and}\quad \abs{\gamma(t)} < \rho a\sqrt{\epsilon}\,,
\end{equation}
where~$\gamma(t) \defeq x + n(x) t$ (See e.g., Lemma~\ref{l:unique_projection}). Then, for~$\abs{t} \leq t_0$, 
\begin{equation}
    g(t) \defeq \hat P(\gamma (t)) = \hat P(x, tn(x)) = \frac{1}{2}x^T K x \chi\paren*{\frac{\abs{x}^2}{a^2\epsilon}}\chi\paren*{\frac{t^2}{\tilde a^2 a^2 \epsilon}} = Af\paren*{t^2}\,,
\end{equation}
where
\begin{equation}
    A\defeq \frac{1}{2}x^T K x \chi\paren*{\frac{\abs{x}^2}{a^2\epsilon}}\quad\text{and}\quad f(s) = \chi\paren*{\frac{s}{\tilde a^2 a^2 \epsilon}}\,.
\end{equation}
We notice
\begin{align}
    D\hat P(x) \cdot n(x) &= g'(0) = 2tAf'\paren*{t^2}\vert_{t=0} = 0\,,\\
    n(x)^T D\hat P(x) n(x) &= g''(0) = 2Af'\paren*{t^2} + 4At^2 f''\paren*{t^2}\vert_{t=0} \\
    &= 2Af'(0) = \frac{2A}{\tilde a^2 a^2 \epsilon}\chi'(0) \overset{\eqref{e:chi-def}}{=} 0\,.
\end{align}
For~$x \in \partial\Omega \cap B(0,\rho a\sqrt{\epsilon})^c$, $\hat P = 0$ so~\eqref{e:DPs-normal-vanish} holds trivially. This completes the proof.
\end{proof}

A scaling argument yields the bounds for the $C^3$-norm of~$\hat P$ as stated in Lemma~\ref{l:P-regularity}.

\begin{proof}[Proof of Lemma~\ref{l:P-regularity}]
    Let~$\hat P \colon \R^d\times \R^d \to [0,\infty)$ be defined by
    \begin{equation}
         \hat P (\hat y, \hat z) = \frac12\, \hat y^T K \hat y \,\chi\paren*{\abs{\hat y}^2}\,\chi\paren*{\tilde a^{-2}\abs{\hat z}^2}\,,
    \end{equation}
    where~$K$, $\chi$, and~$\tilde a$ are defined as in Definition~\ref{d:P-perturb-def} and~\eqref{e:rho-def}.  

    Set
    \begin{equation}
        r=a\sqrt{\epsilon}\,,\quad \hat y = \frac{y}{r}\,,\quad \hat z = \frac{z}{r}\,,
    \end{equation}
    and recall that~$P$ is defined as in~\ref{e:P-def} and thus,
    \begin{equation}
        P(y,z) = r^2 \hat P (\hat y, \hat z)\,, \quad \forall y,z\in\R^d\,.
    \end{equation}

    Since $\norm{\hat P}_{C^3(\R^d)}$ is independent of~$a,\epsilon$, and
    \begin{equation}
        D_y \hat y = D_z \hat z = \frac{1}{r}I_d\,,
    \end{equation}
    it follows that for each~$0\leq i\leq 3$,
    \begin{equation}
        \norm{D^iP}_{L^\infty(\R^d)} 
        = r^{2-i}\norm{D^i \hat P}_{L^\infty(\R^d)} 
        \leq C r^{2-i}\,,
    \end{equation}
    for some constant~$C>0$ independent of~$a,\epsilon$.

    Combining this with the fact that~$\norm{\xi}_{C^4(\Omega_{r_0})}$ is independent of~$a,\epsilon$, and taking~$a\sqrt{\epsilon}$ sufficiently small if necessary, we obtain~\eqref{e:P-regularity}. 
    The bounds for~$\hat U$ then follow immediately from the definition~$\hat U = U - \hat P$.
\end{proof}

Before proving Lemmas~\ref{l:Utilde-saddle-almost-local-maximum} and~\ref{l:DU-lb-ind-ae}, we show that~$D\xi(0)$ is in fact the projection onto the stable subspace of~$H(0)=D^2U(0)$, using the identity~\eqref{e:Dd-normal-identity}. Consequently, $y=\xi(x)$ and~$z=x-\xi(x)$ lie approximately in the stable and unstable subspaces of~$H(0)$, respectively, up to an error of order~$O(\abs{x}^2)$.

\begin{lemma}\label{l:Dpi0-Ps}
    Let~$\xi\colon B(0, r_0) \to \partial\Omega$ be defined as in Lemma~\ref{l:facts-on-boundaries}. Define~$P_s$ as in~\eqref{e:Ps-def}. Then,
        \begin{equation}\label{e:Dpi0}
            D\xi(0) = P_s\,.
        \end{equation}
    Moreover, for any~$x \in B(0, r_0/2)$, if we denote
    \begin{equation}
        y = \xi(x) \quad\text{and}\quad z = x-\xi(x)\,,
    \end{equation}
    then
    \begin{gather}
        \label{e:y-almost-stable}
        y = P_s x + O(\abs{x}^2) = P_s y + O(\abs{x}^2)\,,\\
        \label{e:z-almost-unstable}
        z = P_u x + O(\abs{x}^2) = P_u z + O(\abs{x}^2)\,,\\
        \label{e:pi-almost-stable-projection}
        D\xi(x) = P_s + O(\abs{x})\,,
    \end{gather}
    where
    \begin{equation}\label{e:Pu-def}
        P_u = n(0)n(0)^T\,,
    \end{equation}
    is the projection onto the unstable eigenspace of~$H(0)$.

\end{lemma}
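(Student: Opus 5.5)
The starting point is Lemma~\ref{l:signed-distance-properties}. On $\Gamma_{r_0}$ we have $x = \xi(x) + d(x)\,n(\xi(x))$ with $Dd(x) = n(\xi(x))^T$. Since $\xi(x)\in\partial\Omega$, the map $\xi$ is constant along normal lines. In particular, for $t$ small, $\xi(t\,n(0)) = 0$, and hence $D\xi(0)n(0) = 0$. In the tangential directions, $\xi$ restricted to $\partial\Omega$ is the identity. Differentiating along a curve in $\partial\Omega$ through $0$ with velocity $t_i(0)$ gives $D\xi(0)t_i(0) = t_i(0)$. Combining the two, $D\xi(0)$ annihilates $n(0)$ and fixes the tangent space $\mathrm{span}\{t_i(0)\}$. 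This is exactly $P_s$ by \eqref{e:Ps-def}, where item~\eqref{i:tangent-normal-eigenvector} of Lemma~\ref{l:facts-on-boundaries} identifies the tangent space at $0$ with the stable eigenspace of $H(0)$. That proves \eqref{e:Dpi0}.

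For the expansions, I will use $\xi\in C^4(\Gamma_{r_0})$ from \eqref{e:pi-regularity}. On the compact set $\overline{B(0,r_0/2)}$ this gives a uniform bound on $D^2\xi$. Then \eqref{e:pi-almost-stable-projection} follows from the mean value theorem applied to $D\xi$. Taylor's theorem with $\xi(0)=0$ gives $y = \xi(x) = P_s x + O(|x|^2)$. Since $P_s^2 = P_s$, applying $P_s$ yields $P_s y = P_s x + O(|x|^2)$, and therefore $y = P_s y + O(|x|^2)$. For $z$, write $z = x - y = (I-P_s)x + O(|x|^2) = P_u x + O(|x|^2)$, using $I - P_s = n(0)n(0)^T = P_u$. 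Applying $P_u$ and using $P_uP_s = 0$ gives $P_u z = P_u x + O(|x|^2)$, which yields the second equality in \eqref{e:z-almost-unstable}.

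The only subtle point is justifying the tangential derivative identity in the first step. One needs that $\xi(p) = p$ for $p\in\partial\Omega$ near $0$, which holds by uniqueness of the projection in item~\eqref{i:pi-well-def}. One also needs that $\partial\Omega$ is a $C^5$ manifold through $0$ whose tangent space is spanned by the $t_i(0)$, so that smooth curves in $\partial\Omega$ with those velocities exist. Similarly, the normal identity requires $\xi(t\,n(0)) = 0$ for small $t$. This follows from \eqref{e:Dd-normal-identity} together with the standard fact (e.g.\ Lemma~\ref{l:unique_projection}) that points on short normal segments project back to their foot. Everything else is routine Taylor expansion with constants independent of $x$.
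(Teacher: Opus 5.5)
Your proof is correct, but it derives $D\xi(0)=P_s$ differently from the paper; the Taylor-expansion half matches the paper.

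\textbf{The expansions.} You bound $D^2\xi$ on $\overline{B(0,r_0/2)}\subset B(0,r_0)\subset\Gamma_{r_0}$, expand at $0$ using $\xi(0)=0$, and then apply $P_s$ and $P_u$ using $P_s^2=P_s$, $P_uP_s=0$ and $I-P_s=P_u$. This is exactly what the paper does.

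\textbf{Your route to $D\xi(0)=P_s$.} You evaluate $D\xi(0)$ on the orthonormal basis $\{n(0),t_i(0)\}$ using two invariances of the projection. First, $\xi$ fixes $\partial\Omega$, which gives $D\xi(0)t_i(0)=t_i(0)$. Second, $\xi$ is constant on short normal segments (Lemma~\ref{l:unique_projection}), which gives $D\xi(0)n(0)=0$.

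\textbf{The paper's route.} The paper differentiates the identity $x=\xi(x)+d(x)\,n(\xi(x))$ from \eqref{e:Dd-normal-identity}. It substitutes $Dd=n(\xi)^T$ and evaluates at a boundary point, where $d=0$. This yields $D\xi(p)=I-n(p)n(p)^T$ for every $p\in\partial\Omega$ at once. Here the normal direction is handled automatically by the $n(\xi)\,Dd$ term, so the normal-segment lemma is not needed.

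\textbf{Comparison.} The paper's argument is a one-line computation. Yours is more geometric and shows directly why the answer is the tangential projection. It costs an extra appeal to Lemma~\ref{l:unique_projection}, which the paper only uses elsewhere. Both rest on the same regularity from Lemmas~\ref{l:facts-on-boundaries} and~\ref{l:signed-distance-properties}.

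\textbf{A small remark.} For the identity $D\xi(0)=P_s$ itself you only need $P_s=I-n(0)n(0)^T$. Item~(\ref{i:tangent-normal-eigenvector}) is needed only to interpret $P_s$ and $P_u$ as the stable and unstable spectral projections of $H(0)$.
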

\begin{proof}[Proof of Lemma~\ref{l:Dpi0-Ps}]
Define the signed distance function~$d$ as in~\eqref{e:signed-d-def}. Recall that~$\xi \in C^4(\Gamma_{r_0}, \partial\Omega)$ by Lemma~\ref{l:facts-on-boundaries}. Differentiating both sides of the second identity in~\eqref{e:Dd-normal-identity} with respect to~$x$, using the first identity in~\eqref{e:Dd-normal-identity}, evaluating at~$\xi(x)$, and using~$d(\xi(x)) = 0$, we obtain
\begin{equation}\label{e:Dpipi-identity}
    D\xi(\xi(x)) = I_d - n(\xi(x))n(\xi(x))^T\,, \quad \forall x\in \Gamma_{r_0}\,.        
\end{equation}
In particular, since~$0\in \partial\Omega$, evaluating~\eqref{e:Dpipi-identity} at~$x=0$ yields~\eqref{e:Dpi0}.

Using the Taylor expansion of~$\xi$ at~$x=0$, and combining this with~\eqref{e:Dpi0}, we obtain
\begin{equation}\label{e:y-stable-of-x}
    y = P_s x + O(\abs{x}^2)\,, \quad \forall x\in B\paren*{0, \frac{r_0}{2}}\,,
\end{equation}
and, by the definition~$z = x - y$, we also obtain
\begin{equation}
     z = P_u x + O(\abs{x}^2)\,, \quad \forall x\in B\paren*{0, \frac{r_0}{2}}\,.
\end{equation}

Since~$P_s$ is a projection, we have~$\norm{P_s} \leq 1$ and~$P_s^2 = P_s$. Applying~$P_s$ to both sides of~\eqref{e:y-stable-of-x} yields
\begin{equation}
    P_s x = P_s y + O(\abs{x}^2)\,,
\end{equation}
and combining this with~\eqref{e:y-stable-of-x}, we obtain~\eqref{e:y-almost-stable}. 
A similar argument yields~\eqref{e:z-almost-unstable}.
\end{proof}

Lemma~\ref{l:Utilde-saddle-almost-local-maximum} is a direct consequence of the definition of~$\hat P$ together with property~\eqref{e:Dpi0}.
\begin{proof}[Proof of Lemma~\ref{l:Utilde-saddle-almost-local-maximum}]
Note that the regularity of~$\xi$~\eqref{e:pi-regularity} on the compact~$\partial\Omega$ yields that for some~$a, \epsilon$-independent constant~$C>0$ and any~$x\in B(0,r_0)$, it holds that $\abs{\xi(x)} \leq C\abs{x}$.
This implies that there exists~$c\in (0, \rho)$, independent of~$a,\epsilon$, such that for any~$x \in B(0, ca\sqrt{\epsilon})$,
\begin{equation}
    \abs{y} = \abs{\xi(x)} \leq \frac{1}{4}a\sqrt{\epsilon}\quad\text{and}\quad \abs{z} = \abs{x-\xi(x)} \leq \frac{1}{4}\tilde a a\sqrt{\epsilon}\,,
\end{equation}
and hence, noting that~$K$ is symmetric, 
\begin{multline}
    \hat P (x) = \frac{1}{2}y^T K y\,,\quad D\hat P(x)^T = y^T K D\xi(x) \,,\quad\text{and}\quad \\
    D^2\hat P(x) = D\xi(x)^T K D\xi(x) + \sum_{i=1}^d \paren*{K\xi(x)}_i D^2\xi_i(x)\,.
\end{multline}
Combining this with~$y(0)=\xi(0)=0$, \eqref{e:Dpi0}, and the fact that
\begin{equation}\label{e:KP-commutativity}
P_s^T=P_s\quad\text{and}\quad P_sK=KP_s=K\,,    
\end{equation}
 we obtain~\eqref{e:DPtilde-critical-minima}. Using~$$\hat U = U - P\,,\quad DU(0)=0\,,\quad D^2U(0) = H(0) = H_u + H_s\,,$$ and the definition of~$K$ in~\eqref{e:K-def} implies~\eqref{e:Utilde-almost-maximum-at-saddle}.
\end{proof}

Note that a Taylor expansion implies that for any~$C^3$ Morse function~$V$ satisfying~$V(0)=0$, $DV(0)=0$, and such that~$D^2V(0)$ has eigenvalues~$-\bar \lambda_u < 0 < \bar \lambda_1 \le \ldots \le \bar \lambda_{d-1}$, there exists a sufficiently small~$\bar r>0$ such that for all~$x\in B(0, \bar r)$,
\begin{equation}\label{e:morse-function-DV-lb}
\abs{DV(x)} \geq C_V \abs{x},,
\quad\text{where}\quad
C_V = \frac{1}{2}\min\set*{\bar\lambda_u, \bar\lambda_1},.
\end{equation}

However, a naive application of Taylor expansion to~$\hat U$ only guarantees the property~\eqref{e:DU-lb} within a ball of radius~$O(a\sqrt{\epsilon})$, since the $C^3$-norm of~$\hat U$ is of order~$O(1/(a\sqrt{\epsilon}))$, as shown in~\eqref{e:U-regularity}. This makes it difficult to ensure that the gradient of~$\hat U$ remains sufficiently large before~$\Delta \hat U$ increases from its negative value at the saddle~$\Delta \hat U(0)$.

The key feature of the construction of~$\hat P$ is that it separates the normal and tangential components of~$x$, and hence the stable and unstable subspaces of~$H(0)$, up to a small error, as described in Lemma~\ref{l:Dpi0-Ps}. This allows us to analyze the norm of~$D\hat U$ on these orthogonal subspaces separately. As a result, we recover a property analogous to~\eqref{e:morse-function-DV-lb} on an $O(1)$-radius neighborhood, with constant~$C_V = \frac{1}{2}\min\set{\kappa, \lambda_u}$, where~$\kappa$ and~$-\lambda_u$ are the eigenvalues of~$D^2\hat U(0)$ as given in~\eqref{e:Utilde-almost-maximum-at-saddle}..

\begin{proof}[Proof of Lemma~\ref{l:DU-lb-ind-ae}]
Throughout the proof, we adopt the following notational conventions. 
For a scalar function~$f\in C^\infty(\R^d, \R)$, we write $Df = \brak{\partial_1 f\ldots \partial_d f}$ and view it as an element of~$\R^{1\times d}$. 
Moreover, we write~$R_1=O(R_2)$ to mean that there exists an $a,\epsilon$-independent constant~$C>0$ such that~$\abs{R_1} \leq C\abs{R_2}$.
    
Let~$P_s$ and~$H_s$ be defined as in~\eqref{e:Ps-def} and~\eqref{e:Hs-def}, respectively. Similarly, define~$P_u$ and~$H_u$ as in~\eqref{e:Pu-def} and~\eqref{e:Hu-def}. 
We note that~$P_s^T = P_s$, $P_s^2 = P_s$, and $H_s = H_sP_s = P_sH_s$, and analogous properties hold for~$P_u$.

For any~$x\in B(0,1)$,
\begin{equation}\label{e:DtildeU-pythagoras}
\abs{D\hat U(x)}^2 = \abs{P_s D\hat U(x)^T}^2 + \abs{P_u D\hat U(x)^T}^2\,,
\end{equation}
and we estimate each term separately.

For any~$x\in B(0,r_0/2)$, using~$DU(0)=0$ and a Taylor expansion of~$DU(x)$, we obtain
\begin{equation}\label{e:DtildeU-taylor-perturb}
D\hat  U(x)^T = DU(x)^T - D\hat P(x)^T = H(0)x -D\hat P(x)^T + O(\abs{x}^2)\,,
\end{equation}
which implies
\begin{align}
P_s D\hat U(x)^T &= H_sP_s x - P_s D\hat P(x)^T + O(\abs{x}^2)\\
\label{e:DtildeU-stable-part}
&\overset{\eqref{e:y-almost-stable}}{=} H_sP_sy - P_sD\hat P(x)^T + O(\abs{x}^2)\,.
\end{align}

For any~$x\in B(0, \rho a\sqrt{\epsilon})$, we have
\begin{equation}\label{e:DtildeP-chain-rule}
D\hat P (x)^T = (D_x y)^T(D_yP)^T + (D_x z)^T (D_z P)^T\,,
\end{equation}
with
\begin{equation}
(D_y P)^T = M_y y\,, \quad (D_z P)^T = M_z z\,,
\end{equation}
where~$M_y, M_z$ are symmetric matrices in~$\R^{d\times d}$ given by
\begin{align}
M_y &= \chi(t)\chi(s)K + \frac{y^T K y}{a^2\epsilon} \chi'(t)\chi(s) I_d\,,\\
M_z &= \frac{y^T K y}{\tilde a^2 a^2 \epsilon}\chi(t)\chi'(s)I_d\,,
\end{align}
and
\begin{equation}
t = \frac{\abs{y}^2}{a^2\epsilon}\,, \quad s = \frac{\abs{z}^2}{\tilde a^2 a^2 \epsilon}\,.
\end{equation}

Combining this with~\eqref{e:DtildeP-chain-rule}, and using
\begin{align}
(D_x y)^T &= (D\xi(x))^T \overset{\eqref{e:pi-almost-stable-projection}}{=} P_s + O(\abs{x})\,,\\
(D_x z)^T &= I_d - (D_x y)^T = P_u + O(\abs{x})\,,
\end{align}
together with $M_y, M_z = O(1)$ and $y, z = O(\abs{x})$, we obtain
\begin{equation}\label{e:DtildeP-stable-unstable}
D\hat P(x)^T = P_s M_y y + P_u M_z z + O(\abs{x}^2)\,.
\end{equation}

Multiplying both sides by~$P_s$ and using $P_sP_u =0$, \eqref{e:KP-commutativity}, and~\eqref{e:K-def}, we obtain
\begin{align}
P_sD\hat P(x)^T 
&= M_yP_s y + O(\abs{x}^2)\\
&= \paren*{\chi(t)\chi(s) H_s - \kappa \chi(t)\chi(s)P_s + \frac{y^TKy}{a^2\epsilon}\chi'(t)\chi(s) P_s} P_s y + O(\abs{x}^2)\,. 
\end{align}

Substituting into~\eqref{e:DtildeU-stable-part}, we obtain
\begin{equation}\label{e:PsDtildeU-My}
P_sD\hat U(x)^T = \tilde M_yP_sy + O(\abs{x}^2)\,,
\end{equation}
where
\begin{equation}
\tilde M_y = \paren*{1-\chi(t)\chi(s)}H_s + \kappa \chi(t)\chi(s)P_s - \frac{y^TKy}{a^2\epsilon}\chi'(t)\chi(s)P_s\,.
\end{equation}

Since $0\leq \chi(t)\chi(s)\leq 1$, the first two terms form a convex combination of two symmetric matrices $H_s$ and $\kappa P_s$. 
Moreover, since $\chi \geq 0$, $\chi' \leq 0$, and $y^T K y \geq 0$, the third term is positive semidefinite. 
Therefore,
\begin{equation}\label{e:MtildeyPsy}
\abs{\tilde M_y P_s y} \geq \kappa \abs{P_s y}\,.
\end{equation}

Similarly,
\begin{equation}
P_u D\hat U(x)^T = H_uP_u z - P_uD\hat P(x)^T + O(\abs{x}^2)\,,
\end{equation}
and combining with~\eqref{e:DtildeP-stable-unstable} yields
\begin{equation}\label{e:PuDtildeU-Mz}
P_u D\hat U(x)^T = \tilde M_z P_u z + O(\abs{x}^2)\,,
\end{equation}
where
\begin{equation}\label{e:MtildezPuz}
\tilde M_z = H_u - \frac{y^T K y}{\tilde a^2 a^2 \epsilon}\chi(t)\chi'(s)P_u\,.
\end{equation}

Using the definition of~$\tilde a$ in~\eqref{e:rho-def}, $K \preceq \lambda_{d-1}I_d$, and $\supp\chi \subset [0,1]$, we obtain
\begin{equation}
-\lambda_u - \frac{y^T K y}{\tilde a^2 a^2 \epsilon}\chi(t)\chi'(s) 
\leq -\lambda_u + \frac{\lambda_{d-1}}{\tilde a^2}\norm{\chi'}_{L^\infty} 
\leq -\frac{1}{2}\lambda_u\,,
\end{equation}
and hence
\begin{equation}
\abs{\tilde M_z P_u z}\geq \frac{1}{2}\lambda_u\abs{P_u z}\,.
\end{equation}

Combining~\eqref{e:DtildeU-pythagoras}, \eqref{e:PsDtildeU-My}, \eqref{e:MtildeyPsy}, \eqref{e:PuDtildeU-Mz}, and~\eqref{e:MtildezPuz}, we obtain
\begin{align}
\abs{D\hat U(x)} 
&\geq \frac{1}{\sqrt{2}} \paren*{\abs*{P_s D\hat U(x)^T} + \abs*{P_uD\hat U(x)^T} } \\
&\geq \frac{1}{\sqrt{2}}\paren*{\kappa \abs{P_s y} + \frac{1}{2}\lambda_u \abs{P_u z}} - O(\abs{x}^2)\\
&\overset{\eqref{e:y-almost-stable}, \eqref{e:z-almost-unstable}}{\geq} c(\kappa, \lambda_u) \paren*{\abs{y}+\abs{z}} - O(\abs{x}^2)\\
\label{e:DtildeU-lb-small-radius}
&\overset{x=y+z}{\geq} c_0\abs{x} - c_1\abs{x}^2\,.
\end{align}

This holds for all $x\in B(0, \rho a\sqrt{\epsilon})$. Reducing~$a\sqrt{\epsilon}$ if necessary yields
\begin{equation}
c_0\abs{x} - c_1\abs{x}^2 
\geq \frac{1}{2}c_0 \abs{x}\,, \quad \forall x\in B(0, \rho a\sqrt{\epsilon})\,,
\end{equation}
and hence
\begin{equation}\label{e:DtildeU-lb-small-rho}
\abs{D\hat U(x)} \geq \frac{1}{2}c_0\abs{x}\,, \quad \forall x\in B(0, \rho a\sqrt{\epsilon})\,.
\end{equation}

Outside $B(0, \rho a\sqrt{\epsilon})$, we have $\hat P = 0$, so $\hat U = U$. 
Moreover, for some small~$r_2\in (0,1)$,
\begin{align}\label{e:DU-lb-small-r2}
\abs{DU(x)} 
&= \abs{H(0)x} - \norm{D^3U}_{L^\infty(B(0,1))}\abs{x}^2\\
&\geq \min\set{\lambda_1, \lambda_u} \abs{x} - C\abs{x}^2
\geq c\abs{x}\,, \quad \forall x\in B(0,r_2)\,.
\end{align}

Finally, by choosing $a\sqrt{\epsilon}$ sufficiently small so that $\rho a\sqrt{\epsilon} < r_2$, we can combine~\eqref{e:DtildeU-lb-small-rho} and~\eqref{e:DU-lb-small-r2} to conclude~\eqref{e:DU-lb}.
\end{proof}

\section{Spectral gap of local chain (Proof of Lemmas~\ref{l:sg-restricted} and~\ref{l:large-temperature-gap})}\label{s:sg-restricted}
In this section, we prove the two key lemmas stated in Section~\ref{s:key-lemmas}, namely Lemmas~\ref{l:sg-restricted} and~\ref{l:large-temperature-gap}. 

\subsection{Proof of Lemma~\ref{l:sg-restricted}}\label{ss:sg-restricted-proof}
In this subsection, we prove Lemma~\ref{l:sg-restricted}, which provides a lower bound for the spectral gap of the restricted chain in the small-temperature regime. We first outline the main idea, then state the auxiliary lemmas, and finally complete the proof of Lemma~\ref{l:sg-restricted}, postponing the proofs of the intermediate lemmas to the end of this section.

To prove Lemma~\ref{l:sg-restricted}, we first use the Lyapunov drift condition~\eqref{e:lyapunov} to relate the spectral gap of the restricted chain~$\hat Q_{h, \epsilon} = \hat P_{h, \epsilon}\vert_{\Omega_1}$ to that of~$\hat P_{h, \epsilon}\vert_{B(m_1, a \sqrt{\epsilon})}$, i.e., the chain further restricted to a neighborhood of the local minimum~$m_1$ (Lemma~\ref{l:restrict-to-minimum}). 

Next, by applying the Holley--Stroock Lemma~\ref{l:holley-stroock}, we compare the spectral gaps of~$\hat P_{h, \epsilon}\vert_{B(m_1, a \sqrt{\epsilon})}$ and~$\bar P_{h, \epsilon}\vert_{B(m_1, a \sqrt{\epsilon})}$, where the latter denotes the Metropolis random walk with step size~$h$ and Gaussian stationary distribution, restricted to~$B(m_1, a\sqrt{\epsilon})$ (Lemma~\ref{l:minimum-equiv-normal}). 

Since the spectral gap of a Metropolis random walk with a Gaussian stationary distribution on a convex set is well understood (see, e.g.,~\cite{LovaszSimonovits93, KannanLi96, CousinsVempala14}), we obtain a lower bound for the spectral gap of~$\hat Q_{h, \epsilon}$ (Lemma~\ref{l:mrw-normal-gap}). Finally, using the smallness of the perturbation~\eqref{e:L^infty-perturb} together with another application of the Holley--Stroock Lemma~\ref{l:holley-stroock}, we transfer this bound to~$Q_{h, \epsilon}$.

We now formally state the lemmas described above and prove Lemma~\ref{l:sg-restricted}.
\begin{lemma}\label{l:restrict-to-minimum}
Let~$\hat\epsilon, \eta, \lambda, \gamma, a, b$ be as in Lemma~\ref{l:lyapunov}.  Then, for all~$\epsilon \le \hat\epsilon$ and all~$h$ satisfying~$0<h/\epsilon^2 \le \eta$, we have
    \begin{equation}\label{e:restrict-to-minimum}
        \gap(\hat P_{h, \epsilon}\vert_{\Omega_1}) \geq  \frac{\alpha \lambda \gamma h^2}{b + \alpha}\,,\quad\text{where}\quad \alpha = \gap(\hat P_{h, \epsilon}\vert_{B(m_1, a\sqrt{\epsilon})})\,.
    \end{equation}
\end{lemma}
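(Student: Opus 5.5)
This is the standard "Lyapunov drift plus local spectral gap implies global spectral gap" argument, in the form used by Bakry–Barthe–Cattiaux–Guillin and Menz–Schlichting, adapted to discrete time. Write $Q=\hat Q_{h,\epsilon}=\hat P_{h,\epsilon}\vert_{\Omega_1}$, $\mu=\hat\pi_\epsilon\vert_{\Omega_1}$, $B=B(m_1,a\sqrt\epsilon)$, and $\theta=\lambda\gamma h^2$. Rewrite the drift condition~\eqref{e:lyapunov} as
\begin{equation}
\frac{W-QW}{W}\ \ge\ \theta-\frac{b}{W}\one_{B}\ \ge\ \theta-b\one_B ,
\end{equation}
using $W\ge 1$ on $B$ (by~\eqref{e:Utilde-equals-U}, $\hat U=U\ge0$ there). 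The indicator in~\eqref{e:lyapunov} is presumably meant to be $\one_B$, as the proof of Lemma~\ref{l:lyapunov} shows; if a lower bound $W\ge 1/10$ is all that is available, the constant $b$ absorbs a factor $10$.

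\textbf{Step 1 (key inequality).} For $f\in L^2(\mu)$, the reversible-chain identity is
\begin{equation}
\int f^2\,\frac{W-QW}{W}\,d\mu\ \le\ \mathcal E_Q(f).
\end{equation}
To prove it, expand the right side as $\frac12\iint (f(y)-f(x))^2 Q(x,dy)\mu(dx)$. The left side equals
\begin{equation}
\iint f(x)^2\Bigl(1-\frac{W(y)}{W(x)}\Bigr)Q(x,dy)\mu(dx),
\end{equation}
and by symmetry of $\mu(dx)Q(x,dy)$ this is
\begin{equation}
\tfrac12\iint\Bigl(f(x)^2+f(y)^2-f(x)^2\frac{W(y)}{W(x)}-f(y)^2\frac{W(x)}{W(y)}\Bigr)\,\mu Q .
\end{equation}
AM-GM gives $f(x)^2W(y)/W(x)+f(y)^2W(x)/W(y)\ge 2f(x)f(y)$, so the integrand is at most $(f(x)-f(y))^2$. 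Apply this to $f-c$ for a constant $c$ to be chosen. Then
\begin{equation}
\theta\int (f-c)^2d\mu\ \le\ \mathcal E_Q(f)+b\int_B (f-c)^2 d\mu .
\end{equation}

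\textbf{Step 2 (local Poincaré).} Take $c=\mu\vert_B(f)$. Since $\hat P_{h,\epsilon}\vert_B=Q\vert_B$ and its Dirichlet form, computed against $\mu(\cdot\cap B)$, is bounded by the part of $\mathcal E_Q(f)$ coming from pairs $x,y\in B$, we get
\begin{equation}
\int_B(f-c)^2d\mu\ =\ \mu(B)\var_{\mu\vert_B}(f)\ \le\ \frac{\mu(B)}{\alpha}\,\mathcal E_{Q\vert_B}^{\mu\vert_B}(f)\ \le\ \frac{1}{\alpha}\mathcal E_Q(f).
\end{equation}
The check is that the off-diagonal kernel of $Q\vert_B$ coincides with that of $Q$ for $x,y\in B$, and rejection terms contribute nothing to Dirichlet forms.

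\textbf{Step 3 (combine).} Using $\var_\mu(f)\le\int(f-c)^2d\mu$,
\begin{equation}
\theta\var_\mu(f)\ \le\ \Bigl(1+\frac{b}{\alpha}\Bigr)\mathcal E_Q(f),
\end{equation}
so
\begin{equation}
\gap(Q)\ \ge\ \frac{\alpha\theta}{\alpha+b}=\frac{\alpha\lambda\gamma h^2}{b+\alpha}.
\end{equation}

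The main obstacle is Step 1. It needs $QW$ finite and $W$ bounded above and below on $\Omega_1$ so that the integrals and the symmetry manipulation are legitimate. Both hold here because $\hat U$ is continuous on the compact torus. Bounding $\one_B/W$ by $\one_B$ also needs care: as noted above, a weaker lower bound on $W$ only changes $b$.
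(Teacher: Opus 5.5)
Your proof is correct and is essentially the paper's argument. The paper applies its Lemma~\ref{l:mrw-lyapunov}, whose proof uses exactly your Step~1 inequality (cited from Taghvaei--Mehta rather than derived by your AM--GM symmetrization). It then takes $c$ to be the mean over $K=B(m_1,a\sqrt{\epsilon})$ and bounds the restricted Dirichlet form by the full one, as in your Step~2. You are also right that the indicator in~\eqref{e:lyapunov} should be $\one_{B(m_1,a\sqrt{\epsilon})}$, which is what the proof of Lemma~\ref{l:lyapunov} actually delivers (with $b=1$).
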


Using Holley-Stroock~\cite[Proposition 2.3]{MadrasPiccioni99} and~\cite[Theorem 3.1]{KannanLi96}, we have the following.
\begin{lemma}\label{l:minimum-equiv-normal}
Let~$\hat\epsilon, \eta, a$ be as in Lemma~\ref{l:lyapunov}.  Then, for all~$\epsilon \le \hat\epsilon$ and all~$h$ satisfying~$0<h/\epsilon^2 \le \eta$, we have
    \begin{equation}\label{e:minimum-equiv-normal}
        \gap(\hat P_{h, \epsilon}\vert_{B(m_1, a\sqrt{\epsilon})}) \geq 2^{-4} \gap(\bar P_{h, \epsilon}\vert_{B(m_1, a\sqrt{\epsilon})})\,.
    \end{equation}
    Here, $\bar P_{h, \epsilon}\vert_{B(m_1, a\sqrt{\epsilon})}$ denotes the restriction of the chain $\bar P_{h,\epsilon}$ to $B(m_1, a\sqrt{\epsilon})$, where $\bar P_{h,\epsilon}$ is Metropolis random walk with step size~$h$ and the normal stationary distribution~$N(m_1, \epsilon D^2 U(m_1)^{-1})$.
\end{lemma}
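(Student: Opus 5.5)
The plan is to apply the Holley--Stroock comparison (in the Metropolis form of \cite[Proposition 2.3]{MadrasPiccioni99}) to the two Metropolis random walks on the common convex set $B\defeq B(m_1,a\sqrt{\epsilon})$. On $B$, the chain $\hat P_{h,\epsilon}\vert_{B}$ is the Metropolis walk with the same uniform-ball proposal $r_h$ and target $\hat\pi_\epsilon\vert_B\propto \exp(-\hat U/\epsilon)\one_B$. By Lemma~\ref{l:lyapunov}(1), $\hat U=U$ on $B$, so this target is $\exp(-U/\epsilon)\one_B$. The chain $\bar P_{h,\epsilon}\vert_B$ is the Metropolis walk with the same proposal and target $\exp(-\bar U/\epsilon)\one_B$, where $\bar U(x)=\frac12 (x-m_1)^T D^2U(m_1)(x-m_1)$. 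If the two log-densities differ by an oscillation $\mathrm{osc}_B\bigl((U-\bar U)/\epsilon\bigr)\le \theta$, then the densities are comparable within a factor $e^{\theta}$. The acceptance probabilities $\min\{1,\pi(y)/\pi(x)\}$ are then comparable within a factor $e^{\theta}$ (since the ratio changes by at most $e^{\theta}$), and Dirichlet forms and variances transfer. This gives $\gap(\hat P\vert_B)\ge e^{-c\theta}\gap(\bar P\vert_B)$; with the standard bookkeeping (factor $e^{\theta}$ from each of the Dirichlet form, the variance, and the normalizing constants), the constant is $e^{-2\theta}$ or so.

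The key quantitative step is bounding $\theta$. Taylor expanding $U$ at $m_1$ using $U(m_1)=0$, $DU(m_1)=0$ and $U\in C^3$ gives
\begin{equation}
\abs{U(x)-\bar U(x)}\le \tfrac16\norm{D^3U}_{L^\infty}\abs{x-m_1}^3\le C a^3\epsilon^{3/2}, \qquad x\in B,
\end{equation}
so $\theta\le 2Ca^3\epsilon^{1/2}$. Since $a$ is fixed by Lemma~\ref{l:lyapunov}, shrinking $\hat\epsilon$ if necessary (depending on $a$, which is allowed because the constants in Lemma~\ref{l:lyapunov} may be decreased) makes $\theta\le \log 2$ or similar. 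Then $e^{-c\theta}\ge 2^{-4}$, giving \eqref{e:minimum-equiv-normal}. Here $\bar P$ restricted to $B$ is exactly the Metropolis walk for the Gaussian $N(m_1,\epsilon D^2U(m_1)^{-1})$ conditioned on $B$, by the remark after Definition~\ref{d:Markov-chain-restriction}.

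The step needing most care is the precise form of the Holley--Stroock inequality for Metropolis chains, which tracks how the exponent splits. Both the off-diagonal kernel $\alpha(x,y)r_h(x,y)$ and the stationary density change, and one must make sure the constant comes out as $2^{-4}$ rather than a worse power. I would cite \cite[Proposition 2.3]{MadrasPiccioni99} directly. It states that if two densities satisfy $e^{-\theta}\le \pi/\pi'\cdot Z'/Z\le e^{\theta}$ after normalization, then the Metropolis gaps compare within $e^{-4\theta}$ (or comparable). Choosing $\theta\le\log 2$ gives $2^{-4}$. The reference to \cite[Theorem 3.1]{KannanLi96} is only needed downstream for the Gaussian gap itself, not for this comparison.
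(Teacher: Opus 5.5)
Your proposal is correct and follows essentially the same route as the paper: use $\hat U = U$ on $B(m_1,a\sqrt{\epsilon})$, Taylor expand to get $|U-\bar U|/\epsilon \le C a^3\epsilon^{1/2}$ there, shrink $\hat\epsilon$ so that $\tfrac12 \le \tilde\pi_\epsilon/\bar p_\epsilon \le 2$ on the ball, and apply Holley--Stroock to the two Metropolis walks with the same step size $h$. The paper's own Holley--Stroock statement (Lemma~\ref{l:holley-stroock}) bounds the gap ratio below by the square of (lower ratio bound)/(upper ratio bound); with ratio bounds $1/2$ and $2$ this is exactly $2^{-4}$, which settles your uncertainty about the exponent. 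Your quadratic form with the factor $\tfrac12$ is also the correct one; the paper's displayed expansion drops it.
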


\begin{lemma}\label{l:mrw-normal-gap}
Let~$\hat\epsilon, \eta, a$ be as in Lemma~\ref{l:lyapunov}.  Then, there exists a constant~$c>0$ such that for all~$\epsilon \le \hat\epsilon$ and~$0 < h / \epsilon^2\leq \eta $,
    \begin{equation}\label{e:mrw-normal-gap}
        \gap(\bar P_{h, \epsilon}\vert_{B(m_1, a\sqrt{\epsilon})}) \geq c\frac{h^2}{\epsilon}\,.
    \end{equation}
Here, $\bar P_{h, \epsilon}\vert_{B(m_1, a\sqrt{\epsilon})}$ is defined as in Lemma~\ref{l:minimum-equiv-normal}.
\end{lemma}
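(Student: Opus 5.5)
Throughout, write $A = D^2U(m_1)$, $B = B(m_1, a\sqrt{\epsilon})$, and $\bar\pi$ for the Gaussian $N(m_1,\epsilon A^{-1})$ conditioned on $B$. The plan is to reduce Lemma~\ref{l:mrw-normal-gap} to a known conductance/isoperimetry bound for the ball walk on a convex body with a log-concave target, after an affine change of variables that makes the target nearly isotropic on a set of bounded diameter.

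\textbf{Step 1: rescaling.} Set $x = m_1 + \sqrt{\epsilon}\,u$. The chain $\bar P_{h,\epsilon}\vert_{B}$ becomes the Metropolis ball walk with step size $h' = h/\sqrt{\epsilon}$. Its target is $N(0,A^{-1})$ restricted to the ball $B(0,a)$. The domain $B(0,a)$ is convex with diameter $2a$, independent of $\epsilon$. The density is log-concave with Hessian of $-\log$ equal to $A$, bounded above and below by constants depending only on $U$. Moreover $h' = h/\sqrt\epsilon \le \eta \epsilon^{3/2}$ is small. Hence it suffices to show that, for this fixed log-concave target $\mu$ on the fixed convex body $K=B(0,a)$,
\[
  \gap \ge c\,(h')^2 \qquad \text{for all } h' \le h_*,
\]
for some threshold $h_*$. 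Since $(h')^2 = h^2/\epsilon$, this gives exactly \eqref{e:mrw-normal-gap}.

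\textbf{Step 2: conductance.} By Cheeger's inequality, $\gap \ge \Phi^2/2$. I would aim to show $\Phi \ge c\,h'$ using the standard Lov\'asz--Simonovits/Kannan--Li argument, which has two ingredients.
\begin{enumerate}
  \item \emph{One-step overlap.} If $|u-v| \le c' h'$, then $\|T_u - T_v\|_{\TV} \le 1 - c''$, where $T_u$ denotes the transition law from $u$. This needs three facts:
  \begin{itemize}
    \item the proposal balls $B(u,h')$ and $B(v,h')$ overlap in a constant fraction;
    \item Metropolis acceptance is bounded below, because the log-density changes by at most $C a h' \le 1$ over a step;
    \item a constant fraction of each proposal ball lies inside $K$.
  \end{itemize}
  The third fact holds because $K$ is a ball of radius $a \gg h'$, so near the boundary roughly half of each proposal ball is still inside, and a uniform local-conductance bound $\ell(u) \ge c$ follows.
  \item \emph{Isoperimetry.} For a log-concave measure on a convex body of diameter $2a$ we have
  \[
    \mu(S_3) \ge \frac{c\,\dist(S_1,S_2)}{a}\,\min\{\mu(S_1),\mu(S_2)\}
  \]
  for any partition $S_1 \cup S_2 \cup S_3$ of $K$. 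This is \cite[Theorem 3.1]{KannanLi96} (or Lov\'asz--Simonovits).
\end{enumerate}
Combining the two ingredients in the usual way gives $\Phi \ge c\,h'/a$, and therefore $\gap \ge c\,(h')^2/a^2$. Since $a$ is fixed, this is the claimed bound.

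\textbf{Main obstacle.} The delicate point is Step 2(1) near $\partial K$. There the restricted walk rejects every proposal that leaves $K$, so we need local conductance bounded below uniformly, together with the overlap estimate, for points within distance $h'$ of the sphere. I would handle this directly. For $u \in K$, the set $B(u,h') \cap K$ contains a cone of half-angle close to $\pi/2$ with volume at least $c_d|B(u,h')|$, using that $h'/a$ is small. The usual $s$-conductance variant then covers this case, or one can simply quote the ball-walk conductance bound from \cite{KannanLi96, CousinsVempala14}, which is stated for general convex bodies. Finally, all constants depend only on $d$, $a$ and the spectrum of $A$, so they are uniform in $\epsilon \le \hat\epsilon$ and $h \le \eta\epsilon^2$.
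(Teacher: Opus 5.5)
Your argument is correct, but it reaches the bound by a somewhat different route than the paper.

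\textbf{The paper's route.} The paper does not rescale or assemble the conductance bound itself. It quotes \cite[Theorem 3.1]{KannanLi96} in the form $\gap(\bar P_{h,\epsilon}\vert_{B(m_1,a\sqrt\epsilon)}) \ge l^4 h^2\theta_1/(8d\pi\epsilon)$. The factor $\theta_1/\epsilon$ there reflects the strong log-concavity of the Gaussian target, whose Hessian is $D^2U(m_1)/\epsilon \succeq (\theta_1/\epsilon) I$. The only remaining work is a uniform lower bound on the local conductance $l$. The paper gets this from two ingredients:
\begin{enumerate}
\item a crude two-sided bound on the Gaussian density over the ball, which costs a factor exponentially small in $a^2$ in the acceptance probability;
\item the standard estimate $|B(z,h)\cap B(m_1,a\sqrt\epsilon)|/|B(z,h)| \ge \tfrac12 - O(h/(a\sqrt\epsilon))$.
\end{enumerate}

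\textbf{Your route.} You use only log-concavity together with the bounded diameter $2a$ of the rescaled domain. Your isoperimetric constant is therefore of order $1/a$ (that is, $1/(a\sqrt\epsilon)$ in original units) rather than $\sqrt{\theta_1}$. This is harmless because $a$ is fixed. Your observation that the log-density varies by only $O(ah')$ over one step gives acceptance close to $1$. As a result your constant is polynomial in $1/a$, not exponentially small in $a^2$.

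\textbf{Comparison.} The paper's route is essentially a one-line citation. Yours is self-contained and makes the scaling $h^2/\epsilon=(h/\sqrt\epsilon)^2$ transparent. A small attribution point: the diameter form of the isoperimetric inequality you state is the Lov\'asz--Simonovits (localization) inequality for log-concave measures. The theorem the paper takes from \cite{KannanLi96} is the Gaussian, strongly log-concave version.

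\textbf{Overlap step.} State this step a bit more carefully. What you need is that $B(u,h')\cap B(v,h')\cap K$ carries a constant fraction of $|B(u,h')|$; having each ball separately meet $K$ in a constant fraction is not enough. This does hold:
\begin{enumerate}
\item the ball $B\bigl(\tfrac{u+v}{2},\, h'-\tfrac{|u-v|}{2}\bigr)$ lies in both proposal balls;
\item its center lies in $K$ by convexity;
\item a ball of radius comparable to $h'\ll a$ centered in $K=B(0,a)$ meets $K$ in nearly half its volume.
\end{enumerate}
With this, the near-boundary case needs no $s$-conductance refinement.
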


 Finally, we can prove Lemma~\ref{l:sg-restricted}.
\begin{proof}[Proof of Lemma~\ref{l:sg-restricted}]
    Define~$\hat Q_{h, \epsilon} = \hat P_{h, \epsilon}\vert_{\Omega_1}$ as in Lemma~\ref{l:lyapunov}. 
    Combining~\eqref{e:restrict-to-minimum}, \eqref{e:minimum-equiv-normal}, and~\eqref{e:mrw-normal-gap} in  Lemma~\ref{l:restrict-to-minimum}--\ref{l:mrw-normal-gap}, we have
    \begin{equation}\label{e:qtilde-he-lb}
          \gap(\hat Q_{h, \epsilon}) \geq \frac{\alpha\lambda \gamma h^2}{b+\alpha} \geq \frac{\lambda \gamma}{b+2} \alpha h^2\,,
    \end{equation}
    where~$\alpha = \gap(\hat P_{h, \epsilon}\vert_{B(m_1, a\sqrt{\epsilon})})$ satisfies
    \begin{equation}\label{e:alpha-lb}
        \alpha \geq c\frac{h^2}{\epsilon}\,.
    \end{equation}
    Moreover, using Holley-Stroock lemma~\ref{l:holley-stroock} and the bound~\eqref{e:L^infty-perturb} yields
    \begin{equation}\label{e:qhe-qtildehe-equiv}
        \gap(Q_{h, \epsilon}) \geq c \gap(\hat Q_{h, \epsilon})\,.
    \end{equation}
    Combining~\eqref{e:qtilde-he-lb}, \eqref{e:alpha-lb}, and~\eqref{e:qhe-qtildehe-equiv}, we obtain~\eqref{e:sg-restricted}.
\end{proof}

\subsection{Proof of Lemma~\ref{l:large-temperature-gap}}\label{s:large-temperature-gap-proof}
In this subsection, we prove Lemma~\ref{l:large-temperature-gap}. The proof relies on the Holley--Stroock Lemma~\ref{l:holley-stroock} and the definition of the spectral gap for a reversible Markov chain given in~\eqref{e:gap-def}.

\begin{proof}[Proof of Lemma~\ref{l:large-temperature-gap}]
    Define~$ P_{\eta\hat\epsilon^2, \epsilon}\vert_{\Omega}$ as the Metropolis random walk with step size~$\eta\hat\epsilon^2$ and the stationary distribution~$\pi_{\epsilon}\vert_{\Omega}$. Then, we observe that 
    \begin{equation}
       1\leq \frac{\tilde\pi_\epsilon}{\tilde \pi_{\hat\epsilon}} = \exp\paren*{\paren*{\frac{1}{\hat\epsilon} - \frac{1}{\epsilon}} U} \leq  C(\hat\epsilon)\defeq \exp\paren*{\frac{1}{\hat\epsilon} \norm{U}_\infty}\,,
    \end{equation}
    and hence by Holley-Stroock lemma~\ref{l:holley-stroock}, 
    \begin{equation}\label{e:ek-e0-same-stepsize}
        \gap(P_{\eta\hat\epsilon^2, \epsilon}\vert_{\Omega}) \geq C(\hat\epsilon)^{-2} \gap(P_{\eta \hat\epsilon^2, \hat\epsilon}\vert_{\Omega})\,.
    \end{equation}
    Then, we recall
    \begin{equation}\label{e:gap-pk-def}
        \gap(Q_{h, \epsilon})=\gap(P_{h, \epsilon}\vert_{\Omega}) = \inf_{f\in L^2(\pi_\epsilon)\setminus\set{0}} \frac{1}{2\var_{\pi_\epsilon}(f)}D(f)\,,
    \end{equation}
    where
    \begin{equation}
        D(f) = \frac{1}{\pi_\epsilon(\Omega)}\int_{\Omega}\int_{\Omega} \abs{f(x)-f(y)}^2  Q_{h, \epsilon}(x,dy) \pi_\epsilon(x)dydx\,.
    \end{equation}
    We see
    \begin{align}
        D(f) &= \frac{1}{\pi_\epsilon(\Omega)} \int_{\Omega}\int_{\Omega\setminus\set{x}} \abs{f(x)-f(y)}^2   Q_{h, \epsilon}(x,dy) \pi_\epsilon(x)dydx\\
        &= \frac{1}{\pi_\epsilon(\Omega)} \int_{\Omega} \frac{1}{\abs{B(x, h)}} \int_{B(x, h)\cap \paren*{\Omega\setminus\set{x}}} g_f(x,y) dydx\,,
    \end{align}
    where $g_f(x,y) = \abs{f(x)-f(y)}^2  \min\set*{\pi_\epsilon(x), \pi_\epsilon(y)}$,
    and using~$h \geq \eta\hat\epsilon^2$ from the assumption, we obtain
    \begin{align}
        D(f) &\geq  \frac{1}{\pi_\epsilon(\Omega)} \paren*{\frac{\eta\hat\epsilon^2}{h}}^{d}\int_{\Omega} \frac{1}{\abs{B(x,  \eta \hat\epsilon^2 )}} \int_{B(x, \eta\hat\epsilon^2)\cap \paren*{\Omega\setminus\set{x}}} g_f(x,y) dydx\\
        \label{e:Pk-Pktilde-gap-comparison}
        &= \paren*{\frac{\eta\hat\epsilon^2}{h}}^{d} \hat D(f) \geq \paren*{\frac{\eta\hat\epsilon^2}{\bar h}}^{d} \hat D(f)\,,
    \end{align}
    where 
    \begin{equation}
        \hat D(f) = \frac{1}{\pi_\epsilon(\Omega)} \int_{\Omega}\int_{\Omega} \abs{f(x)-f(y)}^2  P_{\eta\hat\epsilon^2, \epsilon}\vert_{\Omega}(x,dy) \pi_\epsilon(x)dydx\,.
    \end{equation}
    Combining~\eqref{e:gap-pk-def}, \eqref{e:Pk-Pktilde-gap-comparison}, \eqref{e:ek-e0-same-stepsize}, and using Lemma~\ref{l:sg-restricted} at~$\epsilon = \hat\epsilon$ and~$h = \eta \hat\epsilon^2$, we obtain
    \begin{align}
        \gap(Q_{h, \epsilon}) \geq \paren*{\frac{\eta\hat\epsilon^2}{\bar h}}^{d} \gap (P_{\eta\hat\epsilon^2, \epsilon}\vert_{\Omega}) &\geq \paren*{\frac{\eta\hat\epsilon^2}{\bar h}}^{d}  C(\hat\epsilon)^{-2}\gap (P_{\eta\hat\epsilon^2, \hat\epsilon}\vert_{\Omega})\\
        &\geq  \hat c_1 \eta^4 \paren*{\frac{\eta\hat\epsilon^2}{\bar h}}^{d}  C(\hat\epsilon)^{-2} \hat\epsilon^3 \defeq  c_2(\eta, \hat\epsilon, \bar h)\,,
    \end{align}
    and this completes the proof for~\eqref{e:large-temperature-gap}.
\end{proof}

\subsection{Proofs of Lemma~\ref{l:restrict-to-minimum}--\ref{l:mrw-normal-gap}}\label{s:restricted-to-minimum--normal-gap-proof}
It remains to prove Lemmas~\ref{l:restrict-to-minimum}--\ref{l:mrw-normal-gap}. To establish the former, we use the following lemma, which connects the spectral gap of a Metropolis chain to that of its restriction under a Lyapunov drift condition. For continuity, we postpone its proof to Appendix~\ref{a:tools-for-gap}.

\begin{lemma}[Lyapunov condition for the spectral gap of a Metropolis chain]\label{l:mrw-lyapunov}
    Let~$(\mathcal X, \mathcal B, m)$ be a Polish measure space, and let~$\pi$ and~$q(x,\cdot)$ be probability densities on~$\mathcal X$ for each~$x\in \mathcal X$. Let~$P(x,dy)$ be the transition kernel of a Metropolis chain with proposal~$q$ and stationary measure~$\pi$. Suppose that there exist constants $\lambda_1, b_1 > 0$, a measurable set $K \subset \mathcal X$, and a measurable function $V \colon \mathcal X \to [1,\infty)$ such that
\begin{equation}
PV \le (1-\lambda_1) V + b_1 \one_K \,.
\end{equation}
Let $P\vert_K$ denote the restriction of $P$ to $K$, as defined in Definition~\ref{d:Markov-chain-restriction}. Then
\begin{equation}\label{e:gap-lb-lyapunov}
\gap(P)
\ge
\frac{\alpha_1 \lambda_1}{b_1+\alpha_1},
\qquad
\text{where}
\qquad
\alpha_1 = \gap(P\vert_K).
\end{equation}
\end{lemma}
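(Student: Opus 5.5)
The plan is to prove the bound through a weighted Poincaré-type inequality that comes from the drift condition. The only property of $P$ I will use is its reversibility with respect to $\pi$; the Metropolis structure plays no further role. Write $\mathcal E(f)=\tfrac12\int\!\!\int |f(x)-f(y)|^2P(x,dy)\pi(dx)$. The argument has three steps, and the resulting bound on $\var_\pi(f)$ is exactly \eqref{e:gap-lb-lyapunov}.

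\textbf{Step 1: the key inequality.} For every $f\in L^2(\pi)$ I will show
\begin{equation*}
\int f^2\,\frac{V-PV}{V}\,d\pi\;\le\;\mathcal E(f).
\end{equation*}
First rewrite the left side as $\int\!\!\int \frac{f(x)^2}{V(x)}\,(V(x)-V(y))\,P(x,dy)\pi(dx)$. By reversibility, I can symmetrize in $(x,y)$, which gives
\begin{equation*}
\tfrac12\int\!\!\int \Big(\tfrac{f(x)^2}{V(x)}-\tfrac{f(y)^2}{V(y)}\Big)(V(x)-V(y))\,P(x,dy)\pi(dx).
\end{equation*}
Expanding the integrand yields
\begin{equation*}
f(x)^2+f(y)^2-f(x)^2\tfrac{V(y)}{V(x)}-f(y)^2\tfrac{V(x)}{V(y)}.
\end{equation*}
By AM--GM the two negative terms sum to at least $2|f(x)f(y)|$, so the integrand is at most $(f(x)-f(y))^2$, which proves the claim. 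The main technical point is justifying the symmetrization when $PV$ may be large. I would handle it by first taking bounded $f$ and truncating $V$ at level $M$, since $V\wedge M$ still satisfies the pointwise algebra, and then passing to the limit by monotone convergence.

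\textbf{Step 2: consequence of the drift.} Dividing the drift condition by $V$ gives $\frac{V-PV}{V}\ge \lambda_1-b_1\one_K/V\ge\lambda_1-b_1\one_K$, where the last step uses $V\ge1$. Plugging this into Step 1 gives, for every $f$,
\begin{equation*}
\lambda_1\int f^2\,d\pi\le \mathcal E(f)+b_1\int_K f^2\,d\pi.
\end{equation*}

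\textbf{Step 3: localizing the variance to $K$.} Let $c=\int f\,d\pi\vert_K$, and apply Step 2 to $f-c$. This is legitimate because $\mathcal E$ is invariant under adding constants, and because $\var_\pi(f)\le\int(f-c)^2d\pi$. I then need to control $\int_K(f-c)^2d\pi=\pi(K)\var_{\pi\vert_K}(f)$.

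By the definition of $\alpha_1$ this is at most $\pi(K)\,\mathcal E_{P\vert_K}(f)/\alpha_1$. The Dirichlet form of $P\vert_K$ with respect to $\pi\vert_K$ only involves pairs $x,y\in K$, since the extra diagonal mass contributes nothing. It therefore equals $\frac{1}{2\pi(K)}\int_K\int_K|f(x)-f(y)|^2P(x,dy)\pi(dx)\le \mathcal E(f)/\pi(K)$. Hence $\int_K(f-c)^2d\pi\le\mathcal E(f)/\alpha_1$.

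Combining the steps gives $\lambda_1\var_\pi(f)\le (1+b_1/\alpha_1)\,\mathcal E(f)$. This is \eqref{e:gap-lb-lyapunov}.
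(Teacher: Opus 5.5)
Your proof is correct and follows the same route as the paper. Both arguments run through the drift-weighted inequality $\lambda_1\|f-c\|_{L^2(\pi)}^2\le \mathcal E(f)+b_1\|(f-c)\one_K\|_{L^2(\pi)}^2$, take $c$ to be the $\pi\vert_K$-mean, control the $K$-part by $\gap(P\vert_K)$, and compare $\mathcal E_{P\vert_K}\le \mathcal E/\pi(K)$.

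There are two small differences. The paper cites the first inequality from Taghvaei--Mehta (their Equation~(7)), whereas you derive it directly by symmetrization and AM--GM. Your Dirichlet-form comparison uses only the definition of the restriction, while the paper goes through the Metropolis acceptance structure. Also, the truncation of $V$ is unnecessary: the drift condition already gives $PV/V\le 1-\lambda_1+b_1$, which supplies the integrability needed for the symmetrization.
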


\begin{proof}[Proof of Lemma~\ref{l:restrict-to-minimum}]
   This is direct from applying Lemma~\ref{l:mrw-lyapunov} to~$P = \hat Q_{h, \epsilon}$, which is the Metropolis random walk with step size~$h$ and the stationary density~$\hat \pi_\epsilon \vert_{\Omega_1}$, with the Lyapunov drift condition~\eqref{e:lyapunov}. 
\end{proof}

To prove Lemma~\ref{l:minimum-equiv-normal}, we use the fact that the Morse potential~$U$ is quadratic with a positive-definite Hessian in a neighborhood of the local minimum. This allows us to compare the Metropolis random walk with Gibbs stationary distribution to that with an appropriately scaled normal stationary distribution, via the Holley--Stroock lemma.

\begin{proof}[Proof of Lemma~\ref{l:minimum-equiv-normal}]
Using~\eqref{e:Utilde-equals-U} and the Taylor expansion of~$U$, we obtain that for all~$x\in B(m_1, a \sqrt{\epsilon})$,
\begin{equation}
    \hat U(x) = U(x) = (x-m_1)^T D^2U(m_1) (x-m_1) + O\paren*{ \epsilon^{3/2} }\,.
\end{equation}
This implies that, if we denote by~$\tilde \pi_\epsilon$ and~$\bar p_\epsilon$ the unnormalized densities of the Gibbs measure (defined as in~\eqref{e:gibbs-d-def}) and the normal distribution~$N(m_1, \epsilon D^2 U(m_1)^{-1})$, respectively, then by decreasing~$\hat\epsilon$ if necessary, for all~$\epsilon \leq \hat\epsilon$,
\begin{equation}
    \frac{1}{2} \leq \exp\paren*{-C\epsilon^{1/2}} \leq \frac{\tilde \pi_\epsilon}{\bar p_\epsilon} \leq \exp\paren*{C\epsilon^{1/2}} \leq 2\,.
\end{equation}
Hence, if we use the same step size~$h$ for both random walks, the Holley--Stroock Lemma~\ref{l:holley-stroock} implies~\eqref{e:minimum-equiv-normal}.
\end{proof}

As mentioned at the beginning of Subsection~\ref{ss:sg-restricted-proof}, the spectral gap of the Metropolis random walk on a convex set with a log-concave stationary density has been well studied (see, e.g.,~\cite{LovaszSimonovits93, KannanLi96, CousinsVempala14}). We apply these existing results to prove Lemma~\ref{l:mrw-normal-gap}.

\begin{proof}[Proof of Lemma~\ref{l:mrw-normal-gap}]
    Let~$\bar p_\epsilon$ be the unnormalized density of the normal distribution~$N(m_1, \epsilon D^2 U(m_1)^{-1})$, and for any~$z\in B(m_1, a \sqrt{\epsilon})$, 
    We observe that for any~$y \in B(m_1, a\sqrt{\epsilon})$,
    \begin{equation}\label{e:hatp-ub-lb}
        \exp\paren*{-a^2\theta_d} \leq \bar p_\epsilon (y) \leq \exp\paren*{-a^2\theta_1}\,,
    \end{equation}
    where $0<\theta_1<\theta_d$ are the smallest and largest eigenvalues of~$D^2 U(m_1)$, respectively. 

    Applying~\cite[Theorem 3.1]{KannanLi96} (or~\cite[Corollary 3.3]{LovaszSimonovits93}, \cite[Theorem 4.5.1]{Woodard07}) yields
    \begin{equation}\label{e:normal-mrw-gap-lb}
        \gap(\bar P_{h, \epsilon}\vert_{B(m_1, a\sqrt{\epsilon})}) \geq \frac{l^4 h^2 \theta_1 }{8d\pi \epsilon}\,,
    \end{equation}
    where the local conductance~$l$ is defined by
    \begin{equation}
        l \defeq \inf_{z\in B(m_1, a \sqrt{\epsilon})} \bar p(z)\,, 
    \end{equation}
    and
    \begin{equation}
         \bar p(z) = \P\brak*{X_1(z) \neq z} 
         = \abs{B(z, h)}^{-1} \int_{B(z, h) \cap B(m_1, a \sqrt{\epsilon})} 
         \min\set*{1, \frac{\bar p_\epsilon (y)}{\bar p_\epsilon(z)}}  \, dy\,.
    \end{equation}
    Here, $X_1(z) \sim \bar P_{h, \epsilon}\vert_{B(m_1, a\sqrt{\epsilon})}(z,\cdot)$ denote the random state of the Markov chain at time~$1$ with transition kernel~$\bar P_{h, \epsilon}\vert_{B(m_1, a\sqrt{\epsilon})}$.

    Using~\eqref{e:hatp-ub-lb}, we obtain
    \begin{align}\label{e:local-conductance-normal-lb}
        l 
        &\geq \exp\paren*{-a^2\paren*{\theta_d-\theta_1}} 
        \inf_{z\in B(m_1, a \sqrt{\epsilon})} 
        \frac{\abs{B(z, h) \cap B(m_1, a\sqrt{\epsilon})}}{\abs{B(z, h)}}\\
        &\geq C(a, \theta_d, \theta_1) 
        \paren*{\frac{1}{2}- \frac{\sqrt{d}h}{4a\sqrt{\epsilon}}}
        \geq C \paren*{\frac{1}{2} - O\paren*{\eta\epsilon^{3/2}}}
        \geq C\,,
    \end{align}
    for some $\epsilon$-independent constant~$C$, provided that $\hat\epsilon$ and~$\eta$ are sufficiently small.
    The second inequality follows from a standard geometric lemma on intersections of balls (see, e.g., \cite[Lemma 0.1]{LovaszSimonovits93} or~\cite[Lemma 4.5.1]{Woodard07}).
    Combining~\eqref{e:local-conductance-normal-lb} with~\eqref{e:normal-mrw-gap-lb} yields~\eqref{e:mrw-normal-gap}.
\end{proof}

\section{Overlap and first-level mixing estimates (Proof of Theorem~\ref{t:main-theorem})}\label{s:overlap}

In this section, we prove Theorem~\ref{t:main-theorem}. To this end, we apply~\cite[Theorem 3.1]{WoodardSchmidlerEA09}, for which we need to introduce the quantities~$\gamma_\pt$ and~$\delta_\pt$ appearing in its estimates, associated with a given sequence of probability measures~$(\pi_k)_{k=0}^N$. These are defined by
\begin{align}
    \gamma_\pt &\defeq \min_{i \in \set{1,2}}\prod_{k=1}^N \min\set*{1, \frac{\pi_{k-1}(\Omega_i)}{\pi_k(\Omega_i)}}\,,\\
    \delta_\pt &\defeq \min_{\substack{|k-l|=1 \\ i \in \set{1,2}}} \frac{1}{\pi_k(\Omega_i)}\int_{\Omega_i} \min\set*{\pi_k(x), \pi_l(x)} \, dx\,,
\end{align}
where~$\pi_k(\Omega_i) = \int_{\Omega_i} \pi_k(x)\,dx$.

Throughout the remainder of this section, given a sequence of temperatures $(\epsilon_k)_{k=0}^N$, we write~$\pi_k$, $\tilde \pi_k$, and~$Z_k$ in place of~$\pi_{\epsilon_k}$, $\tilde \pi_{\epsilon_k}$, and~$Z_{\epsilon_k}$, respectively, for notational convenience.

The next three lemmas provide the key ingredients needed to apply~\cite[Theorem 3.1]{WoodardSchmidlerEA09}. The first lemma establishes a lower bound for~$\gamma_\pt$. The second lemma provides a lower bound for the overlap quantity~$\delta_\pt$. The third lemma gives a lower bound for the spectral gap of a general lazy Metropolis random walk.

\begin{lemma}\label{l:gamma-swc-lb}
Suppose the potential~$U$ satisfies Assumptions~\ref{a:criticalpts} and~\ref{a:massRatioBound}. If all the temperatures~$(\epsilon_k)_{k=0}^N$ are in~$[\underline \theta, \bar \theta]$, then there exists a finite constant
\begin{equation}\label{e:C_BV-def}
    \hat C_\mathrm{BV} \defeq \max_{i\in \set{1,2}}\int_{\underline\theta}^{\bar \theta} \abs*{\partial_{\epsilon'} \pi_{\epsilon'}\paren*{\Omega_i}}d\epsilon' < \infty\,,
\end{equation}
such that
    \begin{equation}\label{e:gamma-swc-lb}
        \gamma_\pt  \geq \exp\paren*{-C_m^2 \hat C_\mathrm{BV}}\,,
    \end{equation}
\end{lemma}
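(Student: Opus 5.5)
The bound on $\gamma_\pt$ will follow by passing to logarithms and turning the product into a telescoping-type sum controlled by the total variation of $\epsilon'\mapsto\pi_{\epsilon'}(\Omega_i)$. First I would check that $\hat C_\mathrm{BV}$ is finite. For fixed $i$ write
\begin{equation}
\pi_{\epsilon'}(\Omega_i)=\frac{\int_{\Omega_i} e^{-U/\epsilon'}}{\int_{\T^d} e^{-U/\epsilon'}}.
\end{equation}
Since $U$ is bounded and $\epsilon'\in[\underline\theta,\bar\theta]$, differentiation under the integral sign gives
\begin{equation}
\partial_{\epsilon'}\pi_{\epsilon'}(\Omega_i)=\frac{1}{\epsilon'^2}\Bigl(\int_{\Omega_i} U\pi_{\epsilon'}-\pi_{\epsilon'}(\Omega_i)\int_{\T^d}U\pi_{\epsilon'}\Bigr),
\end{equation}
which is bounded by $2\norm{U}_{L^\infty}/\epsilon'^2$. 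If $\underline\theta>0$, this bound is integrable on $[\underline\theta,\bar\theta]$. If $\underline\theta=0$ (or $\bar\theta=\infty$), I would instead appeal to the cited result of \cite{HanIyerEA26}, where the same bounded-variation quantity is shown to be finite under Assumptions~\ref{a:criticalpts} and~\ref{a:massRatioBound} via Laplace asymptotics of the Gibbs masses. Where the constant is simply assumed finite, one may treat it as given.

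Next fix $i$ and take logarithms of the product:
\begin{equation}
\log\prod_{k=1}^N\min\Bigl\{1,\frac{\pi_{k-1}(\Omega_i)}{\pi_k(\Omega_i)}\Bigr\}=-\sum_{k=1}^N\bigl(\log\pi_k(\Omega_i)-\log\pi_{k-1}(\Omega_i)\bigr)^+.
\end{equation}
Each increment is at most the absolute difference, and
\begin{equation}
\abs{\log\pi_k(\Omega_i)-\log\pi_{k-1}(\Omega_i)}\le\int_{\epsilon_k}^{\epsilon_{k-1}}\frac{\abs{\partial_{\epsilon'}\pi_{\epsilon'}(\Omega_i)}}{\pi_{\epsilon'}(\Omega_i)}\,d\epsilon'.
\end{equation}
By Assumption~\ref{a:massRatioBound}, $\pi_{\epsilon'}(\Omega_i)\ge C_m^{-2}$ on the whole interval, so the integrand is at most $C_m^2\abs{\partial_{\epsilon'}\pi_{\epsilon'}(\Omega_i)}$. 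The intervals $[\epsilon_k,\epsilon_{k-1}]$ are disjoint and contained in $[\underline\theta,\bar\theta]$. Summing over $k$ therefore bounds the total by $C_m^2\hat C_\mathrm{BV}$, independently of $N$ and of how the temperatures are spaced. Exponentiating and taking the minimum over $i\in\{1,2\}$ gives \eqref{e:gamma-swc-lb}.

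The only delicate point is the finiteness of $\hat C_\mathrm{BV}$ when the temperature window reaches $0$. There the crude derivative bound $O(\epsilon'^{-2})$ is not integrable. One needs the asymptotics $\pi_{\epsilon'}(\Omega_i)\to$ const with derivative $O(1)$ (from Laplace expansion around the minima), or monotonicity of $\pi_{\epsilon'}(\Omega_i)$ for small $\epsilon'$, so that the total variation is at most the oscillation. I would handle this by splitting $[\underline\theta,\bar\theta]$ into a small-temperature piece treated by Laplace asymptotics and a compact remainder treated by the elementary bound above.
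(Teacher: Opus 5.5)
Your proof is correct and essentially the paper's. The paper also writes the product as $\exp\bigl(-\sum_k\log(1\vee r_k^{-1})\bigr)$ with $r_k=\pi_{k-1}(\Omega_i)/\pi_k(\Omega_i)$, bounds each term by $\log(1\vee x)\le|x-1|$ where you use the log-derivative, and applies the lower bound $C_m^{-2}$ on the masses to sum the increments into the total-variation integral, citing \cite{HanIyerEA26} for that step and for the finiteness of $\hat C_\mathrm{BV}$. One small correction: your elementary bound $|\partial_{\epsilon'}\pi_{\epsilon'}(\Omega_i)|\le 2\norm{U}_{L^\infty}(\epsilon')^{-2}$ is already integrable when $\bar\theta=\infty$, so only the case $\underline\theta=0$ needs the Laplace-asymptotics input.
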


\begin{proof}[Proof of Lemma~\ref{l:gamma-swc-lb}]
    Denote~$r_k = \pi_{k-1}\paren*{\Omega_1}/\pi_k\paren*{\Omega_1}$.
    We observe that
    \begin{align}
        \prod_{k=1}^N \paren*{1\wedge r_k} = \exp\paren*{\sum_{k=1}^N \log\paren*{1\wedge r_k}} = \exp\paren*{-\sum_{k=1}^N \log\paren*{1\vee r_k^{-1}}}\,,
    \end{align}
    and using the inequality~$\log\paren*{1\vee x}\leq \abs{x-1}$ yields
    \begin{align}
        \sum_{k=1}^N \log\paren*{1\vee r_k^{-1}} \leq \sum_{k=1}^N \abs*{r_k^{-1}-1} = \sum_{k=1}^N \pi_{k-1}\paren*{\Omega_1}^{-1} \abs*{\pi_{k-1}\paren*{\Omega_1}-\pi_{k}\paren*{\Omega_1}}\,.
    \end{align}
    Combining this Assumption~\ref{a:massRatioBound} with~\cite[Lemma 8.2]{HanIyerEA26} (Or see~\cite[Remark2.5]{HanIyerEA26}) implies
    \begin{equation}
        \sum_{k=1}^N \log\paren*{1\vee r_k^{-1}} \leq C_m^2 \int_{\underline\theta}^{\bar \theta} \abs*{\partial_{\epsilon'} \pi_{\epsilon'}\paren*{\Omega_1}}d\epsilon' = C_m^2 \hat C_\mathrm{BV}\,.
    \end{equation}
    Same bound holds for the case of~$\Omega_2$ so this completes the proof.
\end{proof}

\begin{lemma}\label{l:delta-swc-lb}
    Suppose~$\underline \epsilon < \bar \epsilon$ and~$\nu \in (0, 1/\underline \epsilon)$. Set
    \begin{equation}\label{e:N-e0-eN-def-delta}
        N=\ceil*{1/\bar\nu\underline \epsilon}\,,\quad \epsilon_0 = \bar \epsilon\,,\quad \text{and}\quad \epsilon_N = \underline \epsilon\,,    
    \end{equation}
    and let
    $\paren*{1/\epsilon_k}_{k=0}^{N}$ be linearly spaced Then, 
    \begin{equation}\label{e:delta-swc-lb}
        \delta_\pt\geq M^{-1}\,, \quad\text{where}\quad M(U, \bar\nu) \defeq \exp\paren*{\bar\nu \norm{U}_{L^\infty}}\,.
    \end{equation}
\end{lemma}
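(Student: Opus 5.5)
The plan is to bound the pointwise ratio $\pi_l/\pi_k$ from below for adjacent indices $|k-l|=1$, and then integrate over $\Omega_i$. Write $\beta_k = 1/\epsilon_k$. Since $(\beta_k)_{k=0}^N$ is linearly spaced from $1/\bar\epsilon$ to $1/\underline\epsilon$ with $N=\lceil 1/(\bar\nu\underline\epsilon)\rceil$ steps, each adjacent gap satisfies
\begin{equation}
    |\beta_l-\beta_k| = \frac{1/\underline\epsilon - 1/\bar\epsilon}{N} \le \frac{1/\underline\epsilon}{1/(\bar\nu\underline\epsilon)} = \bar\nu\,.
\end{equation}

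\textbf{Step 1: pointwise bound on unnormalized densities.} Using $U\ge 0$ (Assumption~\ref{a:criticalpts} normalizes $\min U = 0$) together with the bound on $|\beta_l-\beta_k|$, we get
\begin{equation}
    e^{-\bar\nu\norm{U}_{L^\infty}} \le \frac{\tilde\pi_l(x)}{\tilde\pi_k(x)} = e^{-(\beta_l-\beta_k)U(x)} \le e^{\bar\nu\norm{U}_{L^\infty}}\,.
\end{equation}
Integrating this over $\T^d$ gives $Z_l/Z_k \le e^{\bar\nu\norm U_{L^\infty}}$ as well.

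\textbf{Step 2: the naive combination and why it is too weak.} Combining the two bounds from Step 1 naively yields $\pi_l/\pi_k \ge M^{-2}$, which loses a factor of $2$ in the exponent. To recover the stated constant $M^{-1}$, we use monotonicity in $\beta$.

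\textbf{Step 3: splitting by the sign of $\beta_l-\beta_k$.} First suppose $\beta_l > \beta_k$ (colder level $l$). Then $\tilde\pi_l \le \tilde\pi_k$ pointwise, so $Z_l\le Z_k$, and hence
\begin{equation}
    \frac{\pi_l}{\pi_k} = \frac{\tilde\pi_l}{\tilde\pi_k}\cdot\frac{Z_k}{Z_l} \ge \frac{\tilde\pi_l}{\tilde\pi_k} \ge M^{-1}\,.
\end{equation}
Now suppose $\beta_l<\beta_k$. Then $\tilde\pi_l\ge\tilde\pi_k$, so the ratio $\tilde\pi_l/\tilde\pi_k$ is at least $1$, while $Z_k/Z_l \ge M^{-1}$ by Step 1. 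In both cases $\min\{\pi_k,\pi_l\} \ge M^{-1}\pi_k$ pointwise.

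\textbf{Step 4: conclusion.} Integrating the pointwise bound over $\Omega_i$ and dividing by $\pi_k(\Omega_i)$ gives $\delta_\pt \ge M^{-1}$. Since no step presents a real obstacle, the only point requiring care is the sign split in Step 3, which is what keeps the constant at $M^{-1}$ rather than $M^{-2}$.
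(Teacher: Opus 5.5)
Your proof is correct and follows essentially the same route as the paper. The paper likewise uses $0\le 1/\epsilon_{k+1}-1/\epsilon_k\le\bar\nu$ to obtain the one-sided bounds $M^{-1}\le\tilde\pi_{k+1}/\tilde\pi_k\le 1$ and $M^{-1}\le Z_{k+1}/Z_k\le 1$, hence $M^{-1}\le\pi_{k+1}/\pi_k\le M$. It then handles the neighbours $k+1$ and $k-1$ separately, which is exactly your sign split, and this is what keeps the constant at $M^{-1}$.
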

\begin{proof}[Proof of Lemma~\ref{l:delta-swc-lb}]

We observe that if~$\paren*{1/\epsilon_k}_{k=0}^{N}$ are linearly spaced, then the choice of~$N$ in~\eqref{e:N-e0-eN-def-delta} implies 
\begin{equation}
    0\leq \frac{1}{\epsilon_{k+1}} - \frac{1}{\epsilon_k} = \frac{1}{N}\paren*{\frac{1}{\underline \epsilon} - \frac{1}{\bar \epsilon}} \leq \bar\nu\,. 
\end{equation} 
This implies that
\begin{gather}
      M^{-1} \leq \frac{\tilde \pi_{k+1}(z)}{ \tilde \pi_k(z)} = \exp\paren*{-\paren*{\frac{1}{\epsilon_{k+1}}  -\frac{1}{\epsilon_{k}} } U(z)} \leq 1 \quad\text{and}\quad M^{-1} \leq \frac{Z_{k+1}}{Z_k} \leq 1\,,
\end{gather}
and hence, for each~$k\in\set{0, \ldots, N}$, if we define~$r_k(x) \defeq \pi_{k+1}(x)/\pi_k(x)$, then
\begin{equation}
    M^{-1} \leq \inf_{\T^d} r_k \leq \sup_{\T^d} r_k \leq M \,.
\end{equation}
Thus, we obtain that for each~$k\in\set{0, \ldots, N}$ and~$i\in \set{1,2}$, 
\begin{align}\label{e:pik-k+1-overlap}
    \int_{\Omega_i} \min\set*{\pi_k(z), \pi_{k+1}(z)} dz &= \int_{\Omega_i} \min\set*{1, r_k(z)} \pi_k(z) dz \geq M^{-1}\pi_k(\Omega_i)\,,
\end{align}
and for each~$k\in\set{1, \ldots, N+1}$,
\begin{equation}\label{e:pik-k-1-overlap}
    \int_{\Omega_i} \min\set*{\pi_k(z), \pi_{k-1}(z)} dz = \int_{\Omega_i} \min\set*{1, r_{k-1}^{-1}(z)}\pi_k(z) dz \geq M^{-1}\pi_k(\Omega_i)\,.
\end{equation}
Combining~\eqref{e:pik-k+1-overlap} and~\eqref{e:pik-k-1-overlap} yields~\eqref{e:delta-swc-lb}.
\end{proof}

\begin{lemma}\label{l:general-eh-gap}
    There exists a dimensional constant~$\hat c_d>0$ such that for any~$\epsilon > 0$ and~$h \in (0,1]$,  
   \begin{equation}\label{e:general-eh-gap}
        \gap(T_{h, \epsilon})\geq \hat c_d\exp\paren*{-\frac{2\norm{U}_{L^\infty}}{\epsilon} }h^2 \,.
    \end{equation}
    Here, $T_{h, \epsilon}$ is the lazy Metropolis random walk with step size~$h$ and stationary density~$\pi_\epsilon$, defined as in~\eqref{e:Thpi-def}.
\end{lemma}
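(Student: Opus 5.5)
The plan is to compare $T_{h,\epsilon}$ with the lazy uniform random walk on $\T^d$ with step size $h$, using the Holley--Stroock comparison (Lemma~\ref{l:holley-stroock}) together with a direct Dirichlet form comparison. Then the problem reduces to a lower bound of order $h^2$ for the spectral gap of the uniform-ball random walk on the torus.

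\textbf{Step 1: remove the potential.} Write $\mathcal E_{T}(f)=\frac14\int\int |f(x)-f(y)|^2 \min\{\pi_\epsilon(x),\pi_\epsilon(y)\} r_h(x,y)\,dy\,dx$. This uses the identity $\pi_\epsilon(x)\alpha(x,y)=\min\{\pi_\epsilon(x),\pi_\epsilon(y)\}$, and the extra factor $\frac12$ comes from laziness. Since $0\le U\le \norm{U}_{L^\infty}$, we have $e^{-\norm{U}_\infty/\epsilon}\le Z_\epsilon\,\pi_\epsilon\le 1$ and $Z_\epsilon\in[e^{-\norm{U}_\infty/\epsilon},1]$. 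Hence $\pi_\epsilon$ is within a factor $e^{\norm{U}_\infty/\epsilon}$ of the Lebesgue (uniform) density $\mathrm{Leb}$ on $\T^d$, in both directions. This gives
\[
\mathcal E_T(f)\ge \tfrac12 e^{-\norm{U}_\infty/\epsilon}\mathcal E_{\mathrm{unif}}(f),\qquad \var_{\pi_\epsilon}(f)\le \int |f-\bar f|^2\pi_\epsilon \le e^{\norm{U}_\infty/\epsilon}\var_{\mathrm{Leb}}(f).
\]
Here $\bar f$ is the Lebesgue mean and $\mathcal E_{\mathrm{unif}}$ is the Dirichlet form of the plain ball walk $K_h(x,dy)=r_h(x,y)dy$. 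Combining these yields $\gap(T_{h,\epsilon})\ge \frac12 e^{-2\norm{U}_\infty/\epsilon}\gap(K_h)$, which is exactly the Holley--Stroock type loss claimed.

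\textbf{Step 2: gap of the ball walk on the torus.} The kernel $K_h$ is a convolution operator on $\T^d$, so it is diagonalized by Fourier modes $e^{2\pi i k\cdot x}$, $k\in\mathbb Z^d\setminus\{0\}$. Its eigenvalues are $\phi(hk)=\E[e^{2\pi i h k\cdot\zeta}]$, which are real by the symmetry of $\zeta$. Thus
\[
\gap(K_h)=\inf_{k\neq0}\bigl(1-\E[\cos(2\pi h\,k\cdot\zeta)]\bigr).
\]
Set $s=2\pi h|k|$. Then $1-\E\cos(s\zeta_1)$ is bounded below as follows.
\begin{itemize}
\item For $s\le 1$, it is at least $c s^2\sigma^2$, by $1-\cos u\ge u^2/2-u^4/24$.
\item For $s\ge1$, it is at least a dimensional constant $c'>0$. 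This holds because $\E\cos(s\zeta_1)$ is continuous in $s$, strictly less than $1$ for $s>0$ (since $\zeta_1$ has a density), and tends to $0$ as $s\to\infty$ by Riemann--Lebesgue.
\end{itemize}
Since $|k|\ge1$ and $h\le1$, both cases give $\gap(K_h)\ge \hat c_d' h^2$.

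\textbf{Main obstacle.} The main point to check is the uniform lower bound for $1-\E\cos(s\zeta_1)$ over all $s\ge 1$, which has to be dimensional only; the compactness-plus-Riemann--Lebesgue argument handles it. The Dirichlet form and variance comparison in Step~1 is routine. Setting $\hat c_d=\frac12\hat c_d'$ finishes the argument. Note that one does not even need $h\le1$ for the upper regime, but it is needed to keep the $h^2$ form meaningful.
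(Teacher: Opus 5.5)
Your proof is correct. Step~1 is the same Holley--Stroock comparison the paper uses. The paper compares with the lazy uniform walk $T_{h,\infty}$; you compare with the non-lazy ball walk $K_h$ and absorb the factor $\tfrac12$ coming from laziness.

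The genuine difference is Step~2. The paper bounds $\gap(T_{h,\infty})$ by citing the Lov\'asz--Simonovits conductance bound (their Corollary~3.3) with local conductance $\tfrac12$. You instead diagonalize $K_h$ exactly in Fourier modes and use rotation invariance to reduce to the single function $s\mapsto 1-\mathbb E\cos(s\zeta_1)$. You then bound it by a Taylor estimate for $s\le 1$, and by continuity, strict positivity and Riemann--Lebesgue for $s\ge1$.

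Your route is self-contained and computes the gap of $K_h$ exactly. It also avoids a slightly loose step in the paper: the Lov\'asz--Simonovits results are formulated for convex bodies in $\mathbb R^d$, so invoking them on $\mathbb T^d$ needs some adaptation. The price is that your argument rests entirely on translation invariance. The conductance approach is more robust and carries over to non-homogeneous settings, such as walks on convex sets with log-concave targets, which is how the paper treats the local chain in Lemma~\ref{l:mrw-normal-gap}.

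Two small remarks. First, if you keep track of $Z_\epsilon$, the same factor $1/Z_\epsilon$ appears in your lower bound for $\min\{\pi_\epsilon(x),\pi_\epsilon(y)\}$ and in your upper bound for $\pi_\epsilon$. Your comparison therefore actually gives $e^{-\|U\|_{L^\infty}/\epsilon}$ rather than $e^{-2\|U\|_{L^\infty}/\epsilon}$. Second, your closing comment about $h\le1$ is backwards. The regime $s\ge1$ is exactly where $h\le 1$ is used, to pass from the constant $c'$ to $c'h^2$.
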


\begin{proof}
    We first notice that
    \begin{equation}\label{e:first-level-stationary-density-bounds}
        c(\epsilon) \defeq \exp\paren*{-\frac{\norm{U}_\infty}{\epsilon}}\leq \tilde \pi_\epsilon \leq 1\,,
    \end{equation}
    and hence, using Holley-Stroock Lemma~\ref{l:holley-stroock} yields that
    \begin{equation}\label{e:Phe-Ph-compare}
        \gap(T_{h, \epsilon}) \geq c(\epsilon)^2 \gap(T_{h, \infty})\,,
    \end{equation}
    where~$T_{h, \infty}(x,\cdot)$ is the lazy Metropolis random walk with step size~$h$ and the Lebesgue stationary distribution.
    Since~$T_{h, \infty}$ has the Lebesgue stationary distribution, it always accepts the proposed move with probability~$1/2$ and hence the local conductance~$l$ satisfies
    \begin{equation}
        l = \inf_{x\in \T^d} T_{h, \infty}\paren*{x, \T^d\setminus\set{x}} = \frac{1}{2}\,. 
    \end{equation}
    Applying~\cite[Corollary 3.3]{LovaszSimonovits93} with~$t=1/2$, and~$\theta = \hat c_d h \leq \min\set*{1, \hat c_d}$ for some dimensional constant~$\hat c_d$, we obtain
    \begin{equation}
        \gap(T_{h, \infty}) \geq \hat c_d h^2\,,
    \end{equation}
    and combining this with~\eqref{e:Phe-Ph-compare} implies~\eqref{e:general-eh-gap}.
\end{proof}

Finally, combining these estimates with the results established in the previous sections, we obtain Theorem~\ref{t:main-theorem}.

\begin{proof}[Proof of Theorem~\ref{t:main-theorem}]
Let~$\hat\epsilon, \eta$ be as in Lemma~\ref{l:lyapunov}, and let~$\hat c_1$ and~$c_2(\eta, \hat\epsilon, 1)$ be as in Lemmas~\ref{l:sg-restricted} and~\ref{l:large-temperature-gap}, respectively. Set~$c_1 = \hat c_1 \eta^4$. 
Finally, define~$\hat C_\mathrm{BV}$ as in~\eqref{e:C_BV-def} and~$\hat c_d$ as in Lemma~\ref{l:general-eh-gap}, and set~$C_\mathrm{BV} = 5\hat C_\mathrm{BV}$ and~$c_d = 2^{-20} \hat c_d$.

We first note that Lemma~\ref{l:lyapunov} holds for any sufficiently small~$\hat\epsilon$ and~$\eta$. Hence, without loss of generality, we may assume that $\eta \hat\epsilon^2 \leq 1$. 
Choosing~$h_k$ as in~\eqref{e:sg-local-h-choice} ensures that for all~$\epsilon_k \in [\hat\epsilon, \bar \epsilon]$, we have $\eta\hat\epsilon^2 \leq h_k \leq 1$, so that Lemma~\ref{l:large-temperature-gap} applies. Therefore,
\begin{equation}\label{e:sg-local-large-temp}
    \inf_{\epsilon_k \in [\hat\epsilon, \bar\epsilon]} \gap(Q_{h_k, \epsilon_k}) \ge c_2\,.
\end{equation}

On the other hand, for all~$\epsilon_k \in [\underline\epsilon, \hat\epsilon]$, the choice~\eqref{e:sg-local-h-choice} implies that $0 < h_k = \eta\epsilon_k^2 \leq \eta \hat\epsilon^2 \leq 1$, so that Lemma~\ref{l:sg-restricted} applies. Hence,
\begin{equation}\label{e:sg-local-small-temp}
    \gap(Q_{h_k, \epsilon_k}) \geq \hat c_1 \eta^4 \epsilon_k^7 \geq c_1 \underline\epsilon^7\,.
\end{equation}

Combining the above bounds, and using the identity
\begin{equation}
    T_{h_k, \epsilon_k}\vert_{\Omega_1} = \frac{I + P_{h_k, \epsilon_k}\vert_{\Omega_1}}{2} = \frac{I+Q_{h_k, \epsilon_k}}{2}\,,
\end{equation}
together with the fact that laziness halves the spectral gap, and noting that the same argument applies to the other basin~$\Omega_2$, we obtain
\begin{equation}\label{e:lazyT-local-sg-lb-for-all-temp}
    \min_{i\in \set{1,2}}\inf_{\epsilon_k \in [\underline \epsilon, \bar\epsilon]} \gap(T_{h_k, \epsilon_k}\vert_{\Omega_i}) \geq \frac{1}{2}\min\set*{c_2, c_1\underline\epsilon^7}\,.
\end{equation}

Moreover, $T_{h_0, \epsilon_0}$ is a lazy (and hence non-negative definite) reversible Markov chain. Therefore,~\cite{MadrasRandall02} implies that
\begin{equation}
    \gap(\bar T_{h_0, \epsilon_0}) \geq \gap(T_{h_0, \epsilon_0})\,,
\end{equation}
where~$\bar T_{h_0, \epsilon_0}$ is the chain defined as in~\cite[Section 3, Equation (4)]{WoodardSchmidlerEA09}.

Combining this with Lemma~\ref{l:general-eh-gap} yields
\begin{equation}
    \gap(\bar T_{h_0, \epsilon_0}) \geq \hat c_d \exp\paren*{-\frac{2\norm{U}_\infty}{\epsilon_0}} h_0^2\,.
\end{equation}

Finally, combining this with~\eqref{e:gamma-swc-lb}, \eqref{e:delta-swc-lb}, and~\eqref{e:lazyT-local-sg-lb-for-all-temp}, and applying~\cite[Theorem 3.1]{WoodardSchmidlerEA09} to the decomposition~$\mathcal A = \set{\Omega_1, \Omega_2}$ with the number of wells~$J=2$ and the sequence of reversible Markov chains~$(T_{h_k, \epsilon_k}, \pi_{\epsilon_k})_{k=0}^N$, we obtain~\eqref{e:main-theorem-swc-gap}.
\end{proof}

\appendix
\section{Tools for bounding spectral gap}\label{a:tools-for-gap}

This is a well-known result from~\cite{HolleyStroock87}, \cite[Proposition 2.3]{MadrasPiccioni99} and~\cite[Lemma 3.3]{DiaconisSaloff96}, used to compare the spectral gaps of two Metropolis chains with the same symmetric proposal kernel. For the reader’s convenience, we reproduce the statement and proof here in a form suited to our setting, where the ratio of the unnormalized densities is controlled.

\begin{lemma}[Holley--Stroock]\label{l:holley-stroock}
Let~$(\mathcal X, \mathcal B, m)$ be a measure space, and let~$\tilde p_1, \tilde p_2$ be non-negative measurable functions such that $\tilde p_1, \tilde p_2 \in L^1(m)$ and do not vanish simultaneously. Define probability measures~$\pi_i$ by $d\pi_i/dm \propto \tilde p_i$, and let~$P_i(x,dy)$ be the transition kernels of Metropolis chains with stationary measures~$\pi_i$ and a common proposal kernel~$Q(x,dy)$.

Assume that the proposal kernel is symmetric in the following sense: for each~$x \in \mathcal X$, $Q(x,dy)$ admits a density~$q(x,y)$ on~$\mathcal X \setminus \{x\}$ with respect to~$m(dy)$, and $q(x,y) = q(y,x)$ for all~$x,y \in \mathcal X$.

If there exist constants~$a,b>0$ such that
\begin{equation}\label{e:p1p2-pointwise-bound}
    a \leq \frac{\tilde p_1(x)}{\tilde p_2(x)} \leq b\,, \quad \forall x \in \mathcal X,
\end{equation}
then
\begin{equation}\label{e:gap1gap2-bound}
   (b^{-1}a)^2\,\gap(P_2) \leq \gap(P_1) \leq (a^{-1}b)^2\,\gap(P_2)\,.
\end{equation}
\end{lemma}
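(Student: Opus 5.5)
The plan is to compare the Dirichlet forms and the variances of the two chains separately, using the pointwise ratio bound \eqref{e:p1p2-pointwise-bound}, and then take the ratio in the variational formula \eqref{e:gap-def}.

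First, normalize. Let $Z_i=\int\tilde p_i\,dm$, so that $\pi_i = \tilde p_i/Z_i$. Integrating \eqref{e:p1p2-pointwise-bound} against $m$ gives $a\le Z_1/Z_2\le b$. Combining the two bounds yields the density comparison
\begin{equation}
\frac{a}{b}\le \frac{\pi_1(x)}{\pi_2(x)}\le \frac{b}{a}.
\end{equation}

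Second, write out the Dirichlet forms. For a Metropolis chain with symmetric proposal density $q$, the off-diagonal part satisfies
\begin{equation}
\pi_i(x)P_i(x,dy)=\min\{\pi_i(x),\pi_i(y)\}\,q(x,y)\,m(dy), \qquad y\ne x .
\end{equation}
The rejection mass sits on the diagonal and contributes nothing to $|f(y)-f(x)|^2$. Hence
\begin{equation}
\mathcal E_i(f)=\frac12\iint |f(x)-f(y)|^2\min\{\pi_i(x),\pi_i(y)\}\,q(x,y)\,m(dx)\,m(dy).
\end{equation}
Since $\min\{\pi_1(x),\pi_1(y)\}\ge (a/b)\min\{\pi_2(x),\pi_2(y)\}$ pointwise, this gives $\mathcal E_1(f)\ge (a/b)\,\mathcal E_2(f)$.

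Third, compare the variances using the variational characterization $\var_{\pi}(f)=\inf_c\int|f-c|^2\,d\pi$. This gives
\begin{equation}
\var_{\pi_1}(f)\le \int|f-\E_{\pi_2}f|^2\,d\pi_1\le (b/a)\var_{\pi_2}(f).
\end{equation}
Dividing the two comparisons, $\mathcal E_1(f)/\var_{\pi_1}(f)\ge (a/b)^2\,\mathcal E_2(f)/\var_{\pi_2}(f)$ for every nonconstant $f$.

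Fourth, take the infimum over $f$ to get $\gap(P_1)\ge (b^{-1}a)^2\gap(P_2)$. The reverse inequality follows by symmetry after swapping the indices, since the ratio $\tilde p_2/\tilde p_1$ lies in $[b^{-1},a^{-1}]$.

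The main obstacle is mild and concerns function spaces. The infimum in \eqref{e:gap-def} runs over $L^2(\pi_i)$, and a function could in principle lie in one space but not the other. This does not happen here: the density bound shows $\pi_1$ and $\pi_2$ are mutually absolutely continuous with bounded densities, so $L^2(\pi_1)=L^2(\pi_2)$ with equivalent norms and the two infima range over the same set.

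The hypothesis that $\tilde p_1,\tilde p_2$ do not vanish simultaneously means the Metropolis acceptance ratio is well defined. On points where both densities vanish (a $\pi$-null set), the formula for the off-diagonal kernel should be read with the convention $\min\{1,0/0\}$ irrelevant, so those points cause no difficulty.
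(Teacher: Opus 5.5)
Your proposal is correct and follows the paper's proof essentially step for step: normalize to get $a/b\le p_1/p_2\le b/a$, compare the Dirichlet forms through the representation with $\min\{p_i(x),p_i(y)\}\,q(x,y)$, compare the variances via the infimum over constants $c$, and take the ratio. Your observation that $L^2(\pi_1)=L^2(\pi_2)$ fills a point the paper leaves implicit. Your last paragraph is harmless but slightly off: the hypothesis rules out common zeros, and together with the ratio bound it forces $\tilde p_1$ and $\tilde p_2$ to be strictly positive everywhere, so there is no null set to handle.
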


\begin{proof}
Recall from~\eqref{e:gap-def} that
\begin{equation}\label{e:gap-def-Pi}
    \gap(P_i) = \inf_{f\in L^2(\pi_i)\setminus\{0\}}\frac{\mathcal E_i (f)}{\var_{\pi_i}(f)}\,,
\end{equation}
where
\[
\mathcal E_i(f) \defeq \frac{1}{2} \int_{\mathcal X}\int_{\mathcal X} |f(y)-f(x)|^2\, P_i(x,dy)\,\pi_i(dx).
\]
Let~$Z_i \defeq \int_{\mathcal X} \tilde p_i(x)\, m(dx)$ be the normalizing constants, so that $\pi_i(dx) = p_i(x)m(dx)$ with $p_i = \tilde p_i/Z_i$. By the definition of the Metropolis random walk and the symmetry of the proposal kernel, we have
\begin{equation}\label{e:energy-def}
    \mathcal E_i(f) = \frac{1}{2}\int_{\mathcal X}\int_{\mathcal X} |f(y)-f(x)|^2 q(x,y)\min\{p_i(y), p_i(x)\}\, m(dy)m(dx)\,.
\end{equation}

The bound~\eqref{e:p1p2-pointwise-bound} implies
\begin{equation}\label{e:p1p2-normalized-bound}
    aZ_2 \leq Z_1 \leq bZ_2\,, \qquad 
    b^{-1}a \leq \frac{p_1(x)}{p_2(x)} \leq a^{-1}b\,.
\end{equation}
Consequently,
\begin{equation}\label{e:e1e2-bound}
    b^{-1}a \leq \frac{\mathcal E_1(f)}{\mathcal E_2(f)} \leq a^{-1}b\,.
\end{equation}

Moreover, using the characterization
\[
\var_{\pi_i}(f) = \inf_{c\in \mathbb R} \int_{\mathcal X} (f-c)^2 p_i(x)\, m(dx),
\]
together with~\eqref{e:p1p2-normalized-bound}, we obtain
\begin{equation}\label{e:var1var2-bound}
    b^{-1}a\, \var_{\pi_2}(f) \leq \var_{\pi_1}(f) \leq a^{-1}b\, \var_{\pi_2}(f)\,.
\end{equation}

Combining~\eqref{e:gap-def-Pi}, \eqref{e:e1e2-bound}, and~\eqref{e:var1var2-bound} yields~\eqref{e:gap1gap2-bound}.
\end{proof}

We also prove Lemma~\ref{l:mrw-lyapunov} here. Our argument adapts part of the proof of~\cite[Theorem~1]{TaghvaeiMehta22}. In~\cite{TaghvaeiMehta22}, it is shown that a Lyapunov condition together with the existence of a small set yields a lower bound on the spectral gap of a reversible Markov chain, in terms of a coupling probability on the small set. We modify their argument to suit our setting of a Metropolis chain and to relate the spectral gap of the chain to that of its restriction. Similar connections in continuous time have been established, for example, in~\cite[Theorem~3.8]{MenzSchlichting14} and~\cite{BakryBartheEA08}.

\begin{proof}[Proof of Lemma~\ref{l:mrw-lyapunov}]
    By the definition of the spectral gap in~\eqref{e:gap-def} and the characterization of variance, it suffices to show that for any~$f\in L^2(\pi)$, there exists~$c$ such that 
    \begin{equation}\label{e:varf-lyapunov-ub}
        \int_\mathcal{X} (f-c)^2 \pi(x)m(dx) \leq \paren*{\frac{\alpha_1\lambda_1}{b_1+\alpha_1}}^{-1} \mathcal E (f)\,,
    \end{equation}
    where~$\mathcal E(f) \defeq \frac{1}{2} \int_\mathcal{X}\int_\mathcal{X} \abs*{f(y)-f(x)}^2 P(x,dy)\pi(x)m(dx)$.

    From~\cite[Equation~(7) and the discussion preceding it]{TaghvaeiMehta22}, we have that for any~$c\in \mathbb R$,
    \begin{align}\label{e:varf-ub}
        \lambda_1 \norm{f-c}_{L^2(\pi)}^2 \leq \ip{f, (I-P)f}_{L^2(\pi)} + b_1 \norm{(f-c)\one_K}_{L^2(\pi)}^2\,.
    \end{align}

    We choose~$c=\int f \,\pi_K(dx)$, where~$\pi_K(dx) = \pi(K)^{-1} \one_K(x)\pi(x)m(dx)$. Then
    \begin{align}\label{e:varf-K-ub}
        \norm{(f-c)\one_K}_{L^2(\pi)}^2 
        = \pi(K)\,\var_{\pi_K}(f_K) 
        \leq \pi(K)\,\gap(P\vert_K)^{-1}\mathcal E_K(f_K)\,,
    \end{align}
    where~$f_K = f\one_K$ and
    \[
    \mathcal E_K(f_K) 
    = \frac{1}{2} \int_K \int_K \abs*{f_K(y)-f_K(x)}^2 P\vert_K(x,dy)\pi_K(dx)\,.
    \]

    Let~$\E$ denote expectation under the joint law where~$X_0\sim \pi_K$, $X_1\sim P\vert_K(X_0,\cdot)$, and~$X_1^*\sim Q(X_0,\cdot)$, with~$Q$ the proposal kernel. Let~$\alpha_K$ and~$\alpha$ denote the Metropolis acceptance probabilities corresponding to~$\pi_K$ and~$\pi$, respectively. Using that~$\alpha_K(x,y)=\alpha(x,y)$ for all~$x,y\in K$, we obtain
    \begin{align}
        2\mathcal E_K(f)
        &= \E\brak*{\abs*{f_K(X_1)- f_K(X_0)}^2} \\
        &= \E\brak*{\abs*{f_K(X_1^*)- f_K(X_0)}^2 \alpha_K(X_0, X_1^*) \one_{\set{X_1^*\in K}}} \\
        &= \E\brak*{\abs*{f(X_1^*)- f(X_0)}^2 \alpha(X_0, X_1^*) \one_{\set{X_1^*\in K}}} \\
        &\leq \E\brak*{\abs*{f(X_1^*)- f(X_0)}^2 \alpha(X_0, X_1^*)} \\
        &= \pi(K)^{-1} \int_K \int_{\mathcal X} \abs{f(y)-f(x)}^2 \alpha(x,y) q(x,y)m(dy)\pi(x)m(dx) \\
        &\leq \pi(K)^{-1} \int_{\mathcal X} \int_{\mathcal X} \abs{f(y)-f(x)}^2 \alpha(x,y) q(x,y)m(dy)\pi(x)m(dx) \\
        \label{e:energy-K-ub}
        &= 2\pi(K)^{-1} \mathcal E(f).
    \end{align}

    Combining~\eqref{e:varf-ub}, \eqref{e:varf-K-ub}, and~\eqref{e:energy-K-ub} yields~\eqref{e:varf-lyapunov-ub}, completing the proof.
\end{proof}

\section{Regularities and properties of basins and projection}\label{a:regularities-of-basin-projection}

We prove Lemma~\ref{l:facts-on-boundaries} and Lemma~\ref{l:signed-distance-properties} together, since they are closely related.

\begin{proof}[Proof of Lemma~\ref{l:facts-on-boundaries} and~\ref{l:signed-distance-properties}]
We assume that~$U \in C^6(\T^d, \R)$.
Then~\cite[Chapter 2.7, The Stable Manifold Theorem, Remark 1]{Perko96} implies items~(\ref{i: boundary-regularity}) and~(\ref{i:tangent-normal-eigenvector}) in Lemma~\ref{l:facts-on-boundaries}.

Since~$\partial\Omega \in C^5$, it follows from~\cite[Chapter 14.6]{GilbargTrudinger01} (see also~\cite{KrantzParks81}) that there exists~$r_0>0$ such that the property~\eqref{e:distance-achieved} holds and moreover, the signed distance function~$d$ defined in~\eqref{e:signed-d-def} satisfies~\eqref{e:signed-d-regularity} and~\eqref{e:Dd-normal-identity}.

Combining the two identities in~\eqref{e:Dd-normal-identity}, we obtain
\begin{equation}
    \xi(x) = x + d(x) Dd(x)^T\,, \quad \forall x\in \Gamma_{r_0}\,.
\end{equation}
Since~$d \in C^5(\Gamma_{r_0})$ and~$Dd \in C^4(\Gamma_{r_0})$, it follows that~$\xi \in C^4(\Gamma_{r_0})$, which proves~\eqref{e:pi-regularity}. This completes the proof.
\end{proof}

\begin{lemma}[Unique Projection along Normals]
\label{l:unique_projection}
Let $\partial\Omega \subset \mathbb{T}^d$ be a compact $C^k$ manifold with $k \ge 3$. There exists a uniform constant $t_0 > 0$ such that for any $x \in \partial\Omega$ and any real number $t$ satisfying $|t| \le t_0$, the unique closest point on $\partial\Omega$ to the point $y = x + t n(x)$ is $x$ itself. That is,
\[ \xi(x + t n(x)) = x \]
where $\xi$ is the closest-point projection map.
\end{lemma}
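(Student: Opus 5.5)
The plan is to derive the statement directly from the tubular neighborhood structure given by Lemma~\ref{l:facts-on-boundaries} and Lemma~\ref{l:signed-distance-properties}, with a compactness argument supplying the uniformity of $t_0$. I take $t_0 \defeq r_0/2$, where $r_0$ is the constant from item~(\ref{i:pi-well-def}) of Lemma~\ref{l:facts-on-boundaries}.

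Fix $x \in \partial\Omega$ and $|t| \le t_0$, and set $y = x + t n(x)$. Since $\abs{y - x} = \abs{t}$, we have $\dist(y,\partial\Omega) \le \abs{t} < r_0$. Hence $y \in \Gamma_{r_0}$, and $\xi(y)$ is the unique nearest point of $\partial\Omega$ to $y$.

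The task is to show $\xi(y) = x$. I would show that the map
\begin{equation}
    \Phi\colon \partial\Omega \times (-r_0, r_0) \to \Gamma_{r_0}\,, \qquad (p,s) \mapsto p + s\,n(p)
\end{equation}
is injective. Once injectivity holds, the identity $y = \xi(y) + d(y)\, n(\xi(y))$ from \eqref{e:Dd-normal-identity} gives $\Phi(\xi(y), d(y)) = y = \Phi(x,t)$, and therefore $\xi(y) = x$.

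For injectivity I would argue as follows. Suppose $\Phi(p,s) = \Phi(p',s') = z$. Then $z \in \Gamma_{r_0}$, and both $p$ and $p'$ are foot points of normal segments through $z$. The cleanest route is to note that the signed distance function $d$ is $C^1$ on $\Gamma_{r_0}$ with $\abs{Dd} = 1$ by \eqref{e:Dd-normal-identity}. Consequently, along the segment $s \mapsto p + s\,n(p)$, starting from $d(p) = 0$ with initial direction $Dd(p) = n(p)^T$, we get $d(p + s\,n(p)) = s$. This follows because the straight line is an integral curve of the unit gradient field $Dd^T = n\circ\xi$: $\xi$ is constant equal to $p$ along it, by uniqueness of ODE solutions for the $C^4$ vector field $n\circ \xi$. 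Then \eqref{e:Dd-normal-identity} gives $\xi(p + s\,n(p)) = p + s\,n(p) - s\,n(p) = p$, which is exactly the claim.

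The main obstacle is this ODE/integral-curve step. It requires checking that the curve $c(s) = p + s\,n(p)$ and the solution of $\dot z = n(\xi(z))$, $z(0) = p$, coincide for $\abs{s} < r_0$. This holds because $\xi(z(s))$ is constant along the flow: we have $D\xi\, n(\xi) = 0$, which follows by differentiating $\xi(x + \tau\, n(\xi(x))) = \xi(x)$ for small $\tau$ (a local fact from $\partial\Omega \in C^k$, $k\ge 3$). We also need that the flow does not leave $\Gamma_{r_0}$, which holds since $\abs{d(z(s))} = \abs{s} < r_0$. If this route is too delicate, a fallback is the standard uniform interior/exterior ball argument. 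By compactness and $C^2$ regularity, $\partial\Omega$ has curvature bounded by some $\kappa_0$, so balls of radius $t_0 < 1/\kappa_0$ tangent at $x$ on either side meet $\partial\Omega$ only at $x$. This gives $\dist(y,\partial\Omega) = \abs{t}$ attained only at $x$.
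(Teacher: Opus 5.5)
Your argument is correct in substance, but it takes a genuinely different route from the paper.

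\textbf{Comparison with the paper.} The paper works in a uniform local graph chart, $z = x + v + h(v)n(x)$ with $|h(v)|\le C|v|^2$. It computes $|y-z|^2 \ge t^2 + |v|^2(1-2|t|C) > t^2$ for $|t|<1/(2C)$. That argument is elementary and needs only a uniform $C^2$ bound. However, it only compares $y$ with boundary points inside the chart. Excluding far-away parts of $\partial\Omega$ needs an extra remark: they lie at distance at least some uniform $\delta_0$ from $x$, so $t_0<\delta_0/2$ suffices.

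You instead take the tubular-neighbourhood facts of Lemmas~\ref{l:facts-on-boundaries} and~\ref{l:signed-distance-properties} as input. You then show that normal segments are integral curves of the unit field $Dd^T$. Uniqueness of the nearest point on $\Gamma_{r_0}$ is a global statement, so the far part of $\partial\Omega$ is handled automatically. You also get any $t_0<r_0$, tied to the same $r_0$ used elsewhere in the paper.

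The price is twofold. The essential geometric content (the uniform sphere condition behind Gilbarg--Trudinger~\S14.6) is imported through the citation, and your argument only extracts the lemma from it. Also, those two lemmas are stated only for the basin boundary, so for the general statement you must quote their general versions. There is no circularity, since this lemma is used only in the proof of Lemma~\ref{l:Utilde-vanish}.

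\textbf{A step that needs better justification.} You obtain $D\xi\,n(\xi)=0$ by differentiating $\xi(x+\tau n(\xi(x)))=\xi(x)$ for small $\tau$, and call this a local fact. For $x$ off $\partial\Omega$, that two-sided identity is not local; it is an instance of the lemma you are proving. The repair is easy:
\begin{itemize}
\item Since $|Dd|^2\equiv 1$ on $\Gamma_{r_0}$ and $d\in C^2$, differentiating gives $D^2d\,Dd^T=0$.
\item From $\xi = x - d\,Dd^T$ you then get $D\xi\,Dd^T = Dd^T - Dd^T(Dd\,Dd^T) - d\,D^2d\,Dd^T = 0$.
\item More directly, the solution of $\dot z = Dd(z)^T$, $z(0)=p$, satisfies $\ddot z = D^2d\,Dd^T = 0$. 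Hence $z(s)=p+s\,n(p)$ and $d(z(s))=s$.
\item Since $z(s)$ stays in the compact set $\{y:\dist(y,\partial\Omega)\le |s|\}\subset\Gamma_{r_0}$, the solution extends to all $|s|<r_0$.
\item Finally, $\xi(z(s)) = z(s)-d(z(s))\,Dd(z(s))^T = p$.
\end{itemize}
Alternatively, only the one-sided identity (moving toward $\partial\Omega$) is needed. It follows from the triangle inequality and uniqueness of the nearest point of $x$, and then extends to $\partial\Omega$ by continuity.

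\textbf{The fallback is incorrect as stated.} A curvature bound $\kappa_0$ alone does not imply that tangent balls of radius $<1/\kappa_0$ meet $\partial\Omega$ only at $x$. Two nearby parallel flat pieces of $\partial\Omega$ have $\kappa_0=0$ and are a counterexample. The uniform radius must also reflect the separation of distant parts of $\partial\Omega$. That is exactly the global issue your main route delegates to the tubular-neighbourhood input.
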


\begin{proof}
Fix a point $x \in \partial\Omega$. Because $\partial\Omega$ is a $C^k$ manifold, it can be represented locally as a graph over its tangent space $T_x\partial\Omega$. For any point $z \in \partial\Omega$ in a sufficiently small neighborhood of $x$, we can uniquely decompose $z$ as
\[ z = x + v + h(v)n(x) \]
where $v \in T_x\partial\Omega$ is a tangent vector, $n(x)$ is the unit outward normal at $x$, and $h$ is a $C^k$ height function. Since $T_x\partial\Omega$ is tangent to the manifold at $x$, we have $h(0) = 0$ and $Dh(0) = 0$. By Taylor's theorem, there exists a constant $C_x > 0$ bounding the second derivatives such that $|h(v)| \le C_x |v|^2$. Because $\partial\Omega$ is compact, its principal curvatures are globally bounded, so we can choose a uniform constant $C = \max_{x \in \partial\Omega} C_x$ independent of $x$.

Now, let $y = x + t n(x)$. The squared distance from $y$ to $x$ is $|y - x|^2 = |t n(x)|^2 = t^2$. 

To show that $x$ is the strictly unique closest point to $y$, we evaluate the squared distance from $y$ to any other nearby point $z \neq x$ on the manifold (so $v \neq 0$):
\begin{align*}
    |y - z|^2 &= |(x + t n(x)) - (x + v + h(v)n(x))|^2 \\
    &= |-v + (t - h(v))n(x)|^2.
\end{align*}
Since $v$ is orthogonal to $n(x)$, we apply the Pythagorean theorem:
\begin{align*}
    |y - z|^2 &= |v|^2 + (t - h(v))^2 \\
    &= |v|^2 + t^2 - 2th(v) + h(v)^2 \\
    &\ge |v|^2 + t^2 - 2|t||h(v)|.
\end{align*}
Substituting the uniform curvature bound $|h(v)| \le C|v|^2$, we obtain:
\begin{align*}
    |y - z|^2 &\ge t^2 + |v|^2 - 2|t|C|v|^2 \\
    &= t^2 + |v|^2 (1 - 2|t|C).
\end{align*}
If we define $t_0 < \frac{1}{2C}$, then for any $t$ such that $|t| \le t_0$, we have $1 - 2|t|C > 0$. Because $z \neq x$ implies $|v| > 0$, the second term is strictly positive. Therefore,
\[ |y - z|^2 > t^2 = |y - x|^2. \]
Thus, $x$ is the strictly unique minimizer of the distance to $y$, concluding the proof.
\end{proof}

\bibliographystyle{halpha-abbrv}
\bibliography{gautam-refs1,gautam-refs2,preprints, references}
\end{document}